\documentclass[12pt]{amsart}
\usepackage{amsmath,amsfonts,amssymb,amsthm,} 
\usepackage[foot]{amsaddr}
\usepackage{mathtools} 
\usepackage{bm} 
\usepackage{xfrac} 
\usepackage{scalerel} 
\usepackage{stmaryrd} 
\usepackage{pgf,tikz}%
\usetikzlibrary{arrows,cd}%
\usepackage[hidelinks]{hyperref}%
\usepackage{graphicx,wrapfig}%
\usepackage{multirow}
\usepackage{paralist}
\usepackage[inline]{enumitem}
\usepackage[margin=1in]{geometry}
\usepackage{caption, subcaption}
\colorlet{RED}{red}

\newtheorem{theorem}{Theorem}
\newtheorem{prop}[theorem]{Proposition}
\newtheorem{lemma}[theorem]{Lemma}
\newtheorem{cor}[theorem]{Corollary}
\newtheorem{definition}[theorem]{Definition}

\newtheorem{remark}[theorem]{Remark}

\newtheorem{question}[theorem]{Question}

\DeclareMathAlphabet{\mathup}{OT1}{\familydefault}{m}{n}%
\newcommand{\RR}{\ensuremath{\mathbb{R}}} 
\newcommand{\CC}{\ensuremath{\mathbb{C}}} 
\newcommand{\HH}{\ensuremath{\mathbb{H}}} 

\DeclareMathOperator{\MCG}{MCG}
\DeclareMathOperator{\Homeo}{Homeo}

\newcommand{\leftQ}[2]{\left.\raisebox{-.2em}{$#2$}\middle\backslash\raisebox{.2em}{$#1$}\right.}
\newcommand{\rightQ}[2]{\left.\raisebox{.2em}{$#1$}\middle/\raisebox{-.2em}{$#2$}\right.}

\title{A Moduli Space of Marked Hyperbolic Structures for Big Surfaces}
\author{Chaitanya Tappu}
\address{Department of Mathematics\\ 310 Malott Hall, Cornell University, Ithaca, NY 14853}
\email{tappu@math.cornell.edu}
\date{\today}

\begin{document}
    \begin{abstract}
        We introduce the moduli space of marked, complete, Nielsen-convex hyperbolic structures on a surface of negative, but not necessarily finite, Euler characteristic.
        The emphasis is on infinite type surfaces, the aim being to study mapping class groups of infinite type surfaces via their action on this marked moduli space.
        We define a topology on the marked moduli space and prove that it reduces to the usual Teichm\"uller space for finite type surfaces.
        We also prove that the action of the mapping class group on this marked moduli space is continuous.
    \end{abstract}
    
    \maketitle
    
    \section{Introduction}
    
    The Teichm\"uller space can be thought of as the moduli space of either marked Riemann surface structures or marked complete hyperbolic structures on a fixed finite type surface.
    The two viewpoints are equivalent due to the uniformisation theorem and the fact that the isometries of the hyperbolic plane are exactly the biholomorphisms.
    The mapping class group of the surface acts on the Teichm\"uller space by changing the marking.
    This action has been studied classically, with important consequences for the mapping class group such as the Nielsen--Thurston classification of mapping classes, the geometric classification of mapping tori, the solution to the Nielsen realisation problem, etc (see \cite{thurston_1988_geometry_dynamics_diffeomorphisms_surfaces}, \cite{farb_margalit_2011_primer}, \cite{hubbard_2006_teichmueller}, \cite{hubbard_2016_teichmueller2}, \cite{hubbard_2022_teichmueller3}, \cite{thurston_1998_hyperbolic_manifolds_fiber_circle}, \cite{kerckhoff_1983_nielsen_realization}).
    Let $S$ be a surface with $-\infty \le \chi (S) < 0$.
    In this paper, we introduce the moduli space $\mathcal T (S)$ of marked, complete, Nielsen-convex hyperbolic structures on $S$, or `marked moduli space' for short.
    Here and henceforth in this paper, the phrase `complete' hyperbolic surface always means a `geodesically complete' hyperbolic surface.
    We are especially interested in infinite type surfaces $S$, that is, the surfaces for which $\chi (S) = -\infty$.
    We equip the marked moduli space with an action of the mapping class group $\MCG (S)$ of the surface $S$, which is a topological group.
    The main result of this paper is that the $\MCG (S)$ action on $\mathcal T (S)$ is continuous.
    We also prove that the space $\mathcal T (S)$ reduces to the usual Teichm\"uller space in case $S$ is a finite type surface, that is, a surface for which $\chi (S)$ is negative and finite.
    
    In fact, Teichm\"uller spaces have already been defined and studied even for infinite type surfaces (see \cite[Chapter 6]{hubbard_2006_teichmueller}).
    However, these are defined for a Riemann surface $X$ rather than a topological surface.
    In particular, the Teichm\"uller space depends on the quasiconformal class of the `basepoint' Riemann surface structure $X$.
    Consequently they admit natural actions by only the quasiconformal mapping class group $\operatorname{QMCG} (X)$, which is a proper subgroup of the full mapping class group $\MCG(X)$ of the underlying topological surface.
    On the other hand, the full group $\MCG(S)$ acts on $\mathcal T (S)$, so we expect $\mathcal T (S)$ to be a useful space for studying $\MCG(S)$.
    Another reason to consider $\mathcal T (S)$ is a theorem of Thurston (\cite[Corollary 5.4]{thurston_1986_earthquakes}) that there exists an essentially unique earthquake between any two relative hyperbolic structures on a complete hyperbolic surface. In fact, our definition of $\mathcal T (S)$ is inspired by this paper.
    
    Let $S$ be a connected, oriented surface with negative Euler characteristic, or equivalently, a nonabelian fundamental group.
    The Euler characteristic need not be finite; our emphasis is on \emph{infinite type surfaces}, those whose Euler characteristic is $-\infty$, or equivalently, surfaces whose fundamental group is not finitely generated.
    We assume $S$ does not have a boundary.
    In this article, all surfaces under consideration will be oriented and all homeomorphisms will be orientation preserving, and we suppress mention of orientation.
    We now define the set $\mathcal T (S)$.
    The topology of $\mathcal T (S)$ is defined in Definition \ref{def:marked_moduli_space_topology} via homeomorphisms at infinity.
    In Theorem \ref{thm:embedding_character_space} we show that the same topology comes via characters of holonomy representations.
    \begin{definition}[Set of marked, complete, Nielsen-convex hyperbolic structures]\label{def:marked_moduli_space}
        \begin{equation}\label{eq:def_marked_moduli_space}
            \mathcal T (S) = \rightQ{\left\{(X, f) \middle\vert\begin{aligned}X &\text{ is a complete Nielsen-convex hyperbolic surface}\\ f& : S \to X \text{ is a homeomorphism} \end{aligned}\right\}}{\sim}
        \end{equation}
        where $(X_1, f_1) \sim (X_2, f_2)$ if there is an isometry $\varphi : X_1 \to X_2$ homotopic to $f_2 \circ f_1^{-1} $.
    \end{definition}
    Here we have adopted the term \emph{Nielsen-convex} from work of Alessandrini, Liu and others (\cite[Definition 4.3]{alessandrini_liu_2011_fenchel_nielsen}).
    They define a notion of Nielsen-convexity for hyperbolic surfaces which may or may not be complete.
    However, we are interested only in complete hyperbolic surfaces.
    So instead of recalling the general definition, we simply state equivalent ways of thinking of Nielsen-convexity for complete hyperbolic surfaces.
    \begin{prop}
        \label{prop:nielsen_convex_alternatives}
        Let $X$ be a complete hyperbolic surface.
        Then the following are equivalent.
        \begin{enumerate}
            \item $X$ is Nielsen-convex.
            \item The convex core $C (X)$ of $X$ equals $X$.
            \item $X$ can be constructed by gluing hyperbolic pairs of pants (possibly with cusps) along their boundary components.
            \item The limit set of the action of $\pi_1 (X)$ on the universal cover $\HH^2$ is the entire circle at infinity.
            \item $X$ is isometric to $\leftQ{\HH^2}{\Gamma}$ where $\Gamma$ is a torsion-free Fuchsian group of the first kind.
            \item The ideal boundary $I (X)$ of $X$ is empty.
            \item $X$ has no visible ends.
        \end{enumerate}
    \end{prop}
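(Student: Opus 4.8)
The plan is to take the presentation $X \cong \HH^2/\Gamma$ as a hub and read off most of the conditions as direct restatements of a single property of the limit set $\Lambda = \Lambda(\Gamma) \subseteq \partial\HH^2$. Since $X$ is complete and hyperbolic, the developing map is a universal covering and $\Gamma \cong \pi_1(X)$ acts freely and properly discontinuously by isometries; thus $\Gamma$ is a torsion-free Fuchsian group and $X \cong \HH^2/\Gamma$ with no further hypotheses. With this in hand, condition (5) is exactly the statement that $\Gamma$ is of the first kind, which by definition means $\Lambda = \partial\HH^2$, and this is condition (4). So (4) $\Leftrightarrow$ (5) is immediate once the quotient structure is recorded.

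Next I would cluster (2), (4), and (6) around the limit set. Writing $\Omega = \partial\HH^2 \setminus \Lambda$ for the domain of discontinuity, the ideal boundary is $I(X) = \Omega/\Gamma$, so $I(X) = \emptyset$ iff $\Omega = \emptyset$ iff $\Lambda = \partial\HH^2$, giving (4) $\Leftrightarrow$ (6). Similarly the convex core is $C(X) = \mathrm{Hull}(\Lambda)/\Gamma$, and the convex hull of a closed subset of the circle is all of $\HH^2$ precisely when that subset is the whole circle; hence $C(X) = X$ iff $\Lambda = \partial\HH^2$, giving (2) $\Leftrightarrow$ (4). For (7) I would identify the visible ends with the funnels, i.e.\ the connected components of $X \setminus C(X)$: each such component is an isometric copy of a funnel flaring out to an arc of $\Omega$, and these are in bijection with the components of $\Omega/\Gamma$. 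Thus $X$ has no visible ends iff $X = C(X)$, giving (7) $\Leftrightarrow$ (2). Condition (1) is then handled by unwinding the Alessandrini--Liu definition: for a complete hyperbolic surface their Nielsen core agrees with $C(X)$, so Nielsen-convexity is exactly the statement $C(X) = X$, that is, (1) $\Leftrightarrow$ (2).

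It remains to fold in the pants-decomposition characterization (3), which I expect to be the main obstacle, particularly for infinite type $X$. For (2) $\Rightarrow$ (3), I would choose a maximal collection of pairwise disjoint, pairwise non-homotopic, essential simple closed curves (a pants decomposition, which exists topologically for any surface, invoking Zorn's lemma in the infinite type case), realize each curve by its simple closed geodesic representative or record it as a cusp when it is peripheral, and argue that because $X = C(X)$ carries no funnels, every complementary region is a complete hyperbolic surface with geodesic and/or cusp boundary whose topological type is that of a thrice-punctured sphere, hence a hyperbolic pair of pants. The care here lies in verifying that the geodesic representatives stay simple and pairwise disjoint and that maximality forces each complementary piece to be a pair of pants rather than a larger subsurface or one harbouring a funnel. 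For the converse (3) $\Rightarrow$ (2), each hyperbolic pair of pants with geodesic or cusp boundary is equal to its own convex core, and gluing along complete geodesics introduces no flaring ends; since the resulting surface has empty boundary, every boundary geodesic is glued and no funnel appears, so $X = C(X)$ and hence $\Lambda = \partial\HH^2$. Assembling the spokes (1) $\Leftrightarrow$ (2) $\Leftrightarrow$ (4) $\Leftrightarrow$ (5) $\Leftrightarrow$ (6), together with (7) $\Leftrightarrow$ (2) and (3) $\Leftrightarrow$ (2), yields the full equivalence.
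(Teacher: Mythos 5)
Your ``hub'' structure around the limit set is exactly the paper's: the paper also reads (2), (4), (5), (6) off the identities $C(X)=\leftQ{CH(\Lambda)}{\Gamma}$ and $I(X)=\leftQ{(S^1\setminus\Lambda)}{\Gamma}$, and your arguments for those equivalences are correct and essentially identical. Where you diverge is that the paper does \emph{not} prove (1)$\Leftrightarrow$(2), (1)$\Leftrightarrow$(3), or the statement about visible ends from scratch: it imports the first from Alessandrini--Liu Proposition 4.6, the second from their Theorem 4.5, and points to Basmajian--\v{S}ari\'c for (7). You instead attempt direct arguments for these, which is a legitimate alternative route but is also where the content actually lives.

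Two concrete issues. First, in (2)$\Rightarrow$(3) you correctly flag the danger zone but do not resolve it: for infinite type $X$ the real difficulty is showing that the geodesic representatives of a topological pants decomposition remain \emph{locally finite} --- a priori infinitely many of the disjoint simple closed geodesics could accumulate on a lamination inside a compact set, in which case the complementary regions would not be open pairs of pants. Ruling this out (together with showing each complementary piece is convex-cocompact-with-cusps rather than harbouring a half-plane) is precisely the substance of the cited Theorem 4.5, so as written your proof of (3) has a genuine gap rather than a routine verification. Second, a smaller imprecision in (7): not every component of $X\setminus C(X)$ is a funnel. An interval of discontinuity stabilized by a hyperbolic element yields a funnel attached along a closed geodesic, but an interval with trivial stabilizer yields a half-plane-type region bounded by an open infinite geodesic; this does not affect the equivalence ``no visible ends iff $C(X)=X$,'' but the identification ``visible ends $=$ funnels'' is not correct for infinite type surfaces. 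If you intend the proof to be self-contained you need to supply the local-finiteness argument; otherwise, citing Alessandrini--Liu as the paper does is the honest move.
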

    The equivalence of the first and the second conditions is \cite[Proposition 4.6]{alessandrini_liu_2011_fenchel_nielsen}, and that of the first and the third conditions is part of \cite[Theorem 4.5]{alessandrini_liu_2011_fenchel_nielsen}.
    The equivalence of the second and the fourth conditions follows easily from the definition of the convex core, which we recall.
    The complete hyperbolic surface $X$ is isometric to $\leftQ{\HH^2}{\Gamma}$ for some torsion-free Fuchsian group $\Gamma$.
    Let $\Lambda \subset \partial \HH^2 = S^1$ be the limit set of $\Gamma$, and $CH (\Lambda)$ be the convex hull of $\Lambda$.
    Then the convex core is $C (X) = \leftQ{CH (\Lambda)}{\Gamma}$.
    Thus $C(X) = X$ if and only if $CH (\Lambda) = \HH^2$, which is equivalent to $\Lambda = S^1$.
    Note that in \cite{alessandrini_liu_2011_fenchel_nielsen}, the hyperbolic pair of pants with three cusps is treated separately due to technical reasons concerning their definition of Nielsen-convexity.
    However, since it satisfies the second condition, we will call it Nielsen-convex.
    The fourth condition above says that $\pi_1 (X)$, viewed as a torsion-free Fuchsian group $\Gamma$, is of the first kind, and $X$ is isometric to $\leftQ{\HH^2}{\Gamma}$, which is exactly the fifth assertion.
    The equivalence of the fifth and sixth conditions follows easily from the definition of the ideal boundary, which is $I (X) = \leftQ{(S^1 \setminus \Lambda)}{\Gamma}$ (see \cite[Section 3.7]{hubbard_2006_teichmueller}).
    For a discussion of visible ends, see \cite[Section 2]{basmajian_saric_2019_geodesically_complete}.
    
    The set $\mathcal T (S)$ is indeed nonempty.
    Take a topological pants decomposition $\mathcal P$ of $S$, where a pant is a surface of zero genus, $b$ boundary components and $n$ punctures with $b + n = 3$.
    For each pant in the scheme $\mathcal P$, consider a hyperbolic pair of pants (with cusps at each puncture).
    The geometry of the hyperbolic pairs of pants is chosen such that the cuff lengths of two boundary components that get glued in the scheme $\mathcal P$ are equal.
    This allows the hyperbolic pairs of pants to be glued according to the scheme $\mathcal P$ to produce a hyperbolic surface $X$ homeomorphic to $S$ via a marking homeomorphism $f : S \to X$.
    If the cuff lengths are all bounded above, then the injectivity radius of points in $X$ is bounded below, and therefore $X$ is complete.
    Further, since $X$ is a union of hyperbolic pairs of pants, Proposition \ref{prop:nielsen_convex_alternatives} asserts that $X$ is a complete, Nielsen-convex hyperbolic surface.
    Therefore $[X, f] \in \mathcal T (S)$, and so $\mathcal T(S)$ is nonempty.
    In fact, even if the lengths of the cuffs are not bounded above, \cite[Theorem 5.1]{basmajian_saric_2019_geodesically_complete} asserts that the hyperbolic pairs of pants may be glued with particular choices of twists in such a way that the resulting surface $X$ is geodesically complete.

    We remark that if $S$ is a closed surface, then the above set $\mathcal T (S)$ reduces, as a set, to the usual Teichm\"uller space (also see Corollary \ref{cor:marked_moduli_space_reduces_to_teichmuller_space}).
    This is because a closed hyperbolic surface has empty ideal boundary, and does not have visible ends (or any ends at all).
    The above is also true for finite type punctured surfaces.
    Indeed, if $S$ is a finite type punctured surface, then in the usual definition of Teichm\"uller space, the hyperbolic surface $X$ is constrained to have finite area.
    For finitely generated Fuchsian groups, the properties of having finite coarea and having full limit set are equivalent.
    Thus our definition is an extension of the usual Teichm\"uller space to infinite type surfaces.
    
    The \emph{mapping class group} of the surface $S$ is $\MCG (S) = \rightQ{\Homeo^+ (S)}{\Homeo_0 (S)}$.
    The group $\Homeo^+ (S)$ is a topological group with the compact-open topology, which agrees, for any metric on $S$, with the topology of uniform convergence on compact subsets.
    The subgroup $\Homeo_0 (S)$ of homeomorphisms isotopic to the identity is a closed subgroup.
    Hence the quotient $\MCG(S)$ is a topological group with the quotient topology (see \cite[Section 2.3]{aramayona_vlamis_2020_big_mapping_class}).
    It is clear that $\MCG(S)$ acts on $\mathcal T (S)$.
    If $\psi$ is a self-homeomorphism of $S$, the mapping class $[\psi]$ acts on a marked hyperbolic structure $[X, f]$ by changing the marking from $f$ to $f \circ \psi^{-1}$.
    We denote the action of the mapping class $[\psi]$ by $A_{[\psi]}$, so that $A_{[\psi]}$ is a function from $\mathcal T (S)$ to itself.
    \begin{prop}
        There is a well defined group action $A : \MCG (S) \times \mathcal T (S) \to \mathcal T (S)$ given by
        \begin{equation} \label{eq:action}
            A ([\psi], [X, f]) = [X, f \circ \psi^{-1}]
        \end{equation}
    \end{prop}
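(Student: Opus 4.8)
The plan is to verify two things: that the assignment \eqref{eq:action} descends to a well-defined map on equivalence classes, independent of every choice of representative, and that the resulting map obeys the two axioms of a (left) group action. As a preliminary I would note that the formula genuinely lands in $\mathcal T(S)$: if $\psi$ is a self-homeomorphism of $S$ and $(X,f)$ is any representative, then $f\circ\psi^{-1}:S\to X$ is again a homeomorphism onto the \emph{same} complete, Nielsen-convex hyperbolic surface $X$, so $(X,f\circ\psi^{-1})$ is a legitimate pair.

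For well-definedness in the hyperbolic-structure variable, I would suppose $(X_1,f_1)\sim(X_2,f_2)$, witnessed by an isometry $\varphi:X_1\to X_2$ homotopic to $f_2\circ f_1^{-1}$, and claim that the \emph{same} $\varphi$ witnesses $(X_1,f_1\circ\psi^{-1})\sim(X_2,f_2\circ\psi^{-1})$. This is immediate once one observes that the comparison map does not change:
\[
    (f_2\circ\psi^{-1})\circ(f_1\circ\psi^{-1})^{-1}=f_2\circ\psi^{-1}\circ\psi\circ f_1^{-1}=f_2\circ f_1^{-1},
\]
which is homotopic to $\varphi$ by hypothesis.

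For well-definedness in the mapping-class variable, I would suppose $[\psi_1]=[\psi_2]$, that is $\psi_2^{-1}\circ\psi_1\in\Homeo_0(S)$, fix $(X,f)$, and show $(X,f\circ\psi_1^{-1})\sim(X,f\circ\psi_2^{-1})$ by taking the witnessing isometry to be $\operatorname{id}_X$. Here the comparison map is
\[
    (f\circ\psi_2^{-1})\circ(f\circ\psi_1^{-1})^{-1}=f\circ(\psi_2^{-1}\circ\psi_1)\circ f^{-1}.
\]
Since $\psi_2^{-1}\circ\psi_1$ is isotopic, hence homotopic, to $\operatorname{id}_S$, pre- and post-composing that homotopy with $f^{-1}$ and $f$ exhibits this map as homotopic to $\operatorname{id}_X$, as required.

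It then remains to check the action axioms. The identity class acts trivially, $A([\operatorname{id}],[X,f])=[X,f]$, directly from the formula, and for composition I would compute
\[
    A([\psi_1],A([\psi_2],[X,f]))=[X,f\circ\psi_2^{-1}\circ\psi_1^{-1}]=[X,f\circ(\psi_1\circ\psi_2)^{-1}]=A([\psi_1][\psi_2],[X,f]),
\]
so that $A$ is a left action. The computations are formal; the one conceptual point deserving care, and the reason everything works, is the interplay between the two equivalence relations at issue---$\sim$ is defined up to \emph{homotopy} of markings whereas $\MCG(S)$ is defined up to \emph{isotopy}. The argument succeeds precisely because isotopic homeomorphisms are homotopic and because homotopy classes are preserved under pre- and post-composition by the fixed marking $f$, so that modifying $\psi$ by an element of $\Homeo_0(S)$ is undetectable at the level of marked structures.
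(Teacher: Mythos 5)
Your proposal is correct and follows essentially the same route as the paper, which compresses the same verifications (independence of the representative $(X,f)$ and of the representative $\psi$) into a single homotopy-commutative diagram. Your explicit check of the group-action axioms and the remark on homotopy versus isotopy are sound additions but do not change the substance of the argument.
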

    \begin{proof}
        We have to show that the action function $A$ is independent of the choices of the hyperbolic surface $X$, the marking map $f$ and the representative homeomorphism $\psi$.
        This follows easily from the diagram in Figure 1
        , which homotopy commutes.
        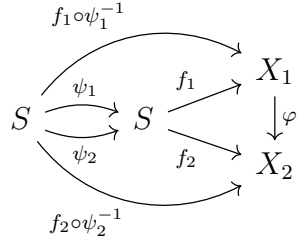
\begin{figure}\label{fig:action_well_defined}
            \centering
            \tikzcdset{row sep/null/.initial=0em}
            $\begin{tikzcd}[row sep = null]
                & & X_1 \arrow[dd,"\varphi"] \\
                S \arrow[r, bend left=20, "\psi_1" pos=0.55] \arrow[r, bend right=20, "\psi_2"' pos=0.55] \arrow[rru, bend left=40, "f_1 \circ \psi_1^{-1}"] \arrow[rrd, bend right=40, "f_2 \circ \psi_2^{-1}"'] & S \arrow[ru, "f_1"] \arrow[rd, "f_2"'] & \\
                & & X_2
            \end{tikzcd}$
            \caption{The action of $\MCG(S)$ on $\mathcal T (S)$ is well defined.}
        \end{figure}
    \end{proof}
    
    The main result of this paper is the following:
    \begin{theorem}\label{thm:main}
        The set $\mathcal T (S)$ has a geometrically defined topology (see Definition \ref{def:marked_moduli_space_topology}), which agrees with the usual topology on the Teichm\"uller space when $S$ is of finite type (see Corollary \ref{cor:marked_moduli_space_reduces_to_teichmuller_space}).
        With respect to this topology, the action function $A$ is continuous (see Theorem \ref{thm:action_continuous}) and $\MCG (S)$ acts on $\mathcal T (S)$ by homeomorphisms (see Corollary \ref{cor:action_by_homeomorphisms}).
    \end{theorem}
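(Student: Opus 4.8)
The plan is to build the topology on $\mathcal{T}(S)$ from the boundary behaviour of marked hyperbolic structures and then verify the assertions in turn. First I would describe each class $[X,f]$ through its holonomy: writing $X = \HH^2/\Gamma$ and using the marking $f$ to identify $\pi_1(S)$ with $\Gamma$, one obtains---since $\Gamma$ is Fuchsian of the first kind by Proposition \ref{prop:nielsen_convex_alternatives}---an action of $\pi_1(S)$ on the circle at infinity $S^1$ whose attracting and repelling fixed points are dense. Given two classes $[X_1,f_1]$ and $[X_2,f_2]$, matching the fixed-point data of corresponding group elements yields an order-preserving bijection between dense subsets of $S^1$ that extends to a homeomorphism $h : S^1 \to S^1$ conjugating the first boundary action to the second; this is the \emph{homeomorphism at infinity}. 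I would then declare $[X_2,f_2]$ to be close to $[X_1,f_1]$ precisely when $h$ is uniformly close to the identity on a prescribed compact portion of the data, which is what Definition \ref{def:marked_moduli_space_topology} makes precise, and verify that the resulting sets form a genuine neighbourhood basis.

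Next I would establish the character description of this topology (Theorem \ref{thm:embedding_character_space}). Each class determines a conjugacy class of holonomy representations $\rho : \pi_1(S) \to \mathrm{PSL}(2,\RR)$, hence a point of $\RR^{\pi_1(S)}$ via the squared-trace functions $\gamma \mapsto \mathrm{tr}^2(\rho(\gamma))$, well defined despite the sign ambiguity in $\mathrm{PSL}(2,\RR)$. The task is to show this map is injective with image a subspace on which the two topologies agree: injectivity is the statement that squared traces determine a Fuchsian representation up to conjugacy, and the topological comparison proceeds by showing that control of finitely many traces pins the boundary homeomorphism down on a compact set and conversely. For finite type $S$ this is exactly the classical embedding of Teichm\"uller space into the character variety, so the induced topology coincides with the usual one; together with the observation (recorded after Proposition \ref{prop:nielsen_convex_alternatives}) that for finitely generated Fuchsian groups full limit set is equivalent to finite coarea, this gives Corollary \ref{cor:marked_moduli_space_reduces_to_teichmuller_space}.

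For the action I would work through the same boundary description. A mapping class $[\psi]$ changes the marking to $f \circ \psi^{-1}$, which on holonomy is precomposition by the induced automorphism $\psi_*$ of $\pi_1(S)$, and on the circle it conjugates the boundary action by the boundary map of $\psi$. To prove continuity of $A : \MCG(S) \times \mathcal{T}(S) \to \mathcal{T}(S)$ (Theorem \ref{thm:action_continuous}) I would fix a basic neighbourhood of the image point, unwind it into a condition on finitely many homotopy classes and a compact arc of $S^1$, and then produce neighbourhoods of $[\psi]$ and of $[X,f]$ whose product is carried inside it. The $\MCG(S)$ factor enters through its topology, represented by self-homeomorphisms that agree with $\psi$ on a large compact subsurface, so that $\psi_*$ is unchanged on the finitely many classes that matter; the $\mathcal{T}(S)$ factor is handled by the homeomorphism-at-infinity estimate of the first step. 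Corollary \ref{cor:action_by_homeomorphisms} is then formal: each $A_{[\psi]}$ is a bijection with inverse $A_{[\psi]^{-1}}$, and joint continuity specialises to continuity of both in the $\mathcal{T}(S)$ variable, so the two are mutually inverse continuous maps and hence homeomorphisms.

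The main obstacle I anticipate is the joint continuity of the action, and the difficulty is intrinsic to the infinite-type setting. The group $\MCG(S)$ is non-discrete with a permutation-type topology, and a mapping class near the identity need not be supported on a compact set, so one cannot simply assume $\psi_*$ fixes the relevant generators. Reconciling this topology with the homeomorphism-at-infinity topology on $\mathcal{T}(S)$---arranging that closeness of mapping classes and of structures together forces closeness of the relabelled and conjugated boundary data on the chosen compact set---is where the real work lies.
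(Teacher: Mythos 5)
Your outline tracks the paper's architecture --- boundary homeomorphisms obtained from the fixed-point data of the holonomy, a character-space comparison for the finite-type case, and the action as change of marking --- but the step you yourself flag as ``where the real work lies'' is precisely the step the theorem requires, and you do not supply it. Concretely: the topology on $\MCG(S)$ is the quotient of the compact-open topology on $\Homeo^+(S)$, so a basic neighbourhood of $[\mathrm{id}_S]$ consists of classes of homeomorphisms \emph{uniformly close} to the identity on a compact set, not homeomorphisms \emph{agreeing} with it there; your sketch oscillates between these two readings and never shows that the former suffices. The missing argument (the paper's Lemma \ref{lem:mcg_normaliser}) is: choose finitely many hyperbolic $\gamma_i \in \Gamma$ whose sinks are $\tfrac{\varepsilon}{2}$-dense in $S^1$, realize them by loops $\alpha_i$ based at $s$, and let $\delta$ be the minimum of the injectivity radius on the compact set $\bigcup_i \alpha_i$. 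If $\psi$ moves every point of $\bigcup_i\alpha_i$ by less than $\delta$, the straight-line homotopy from $\alpha_i$ to $\psi(\alpha_i)$ shows $\psi_*(\gamma_i)=\gamma_i$ for a suitable lift $\tilde\psi$, hence $\partial\tilde\psi$ fixes each sink $(\gamma_i)_\infty$; monotonicity of $\partial\tilde\psi$ then traps $\partial\tilde\psi(q)$ in the same short interval as $q$ for every $q\in S^1$, giving uniform $\varepsilon$-closeness to $\mathrm{id}_{S^1}$. Combined with the fact that $\Psi_p:\MCG(S)\to \mathup{N}(\Gamma)/\Gamma$ is a group isomorphism (so continuity at the identity suffices) and that in boundary coordinates the action is $(G,F)\mapsto F\circ G^{-1}$, which is continuous because $\Homeo^+(S^1)$ is a topological group and the relevant quotient maps are open, this yields Theorem \ref{thm:action_continuous}. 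Without this estimate your treatment of the central claim is a restatement of the difficulty rather than a proof.

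A secondary divergence: you embed into $\RR^{\pi_1(S)}$ via squared traces, whereas the paper uses the quotient $\operatorname{Hom}(\pi_1(S),\mathup{PSL}(2,\RR))/\mathup{PSL}(2,\RR)$ with the quotient of the pointwise-convergence topology. For finite type these agree on the discrete faithful locus, so Corollary \ref{cor:marked_moduli_space_reduces_to_teichmuller_space} is safe either way; but if you intend the trace description to carry the comparison in general you would need to show that traces separate points and induce the same topology for an infinitely generated group, and to handle the fact that characters only determine representations up to conjugacy. The paper resolves the latter by constructing explicit continuous sections that normalize three sinks (respectively three image points) to $0,1,\infty$, a device absent from your sketch and needed to promote convergence of cosets to convergence of representatives.
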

    
    Having defined the topology of the marked moduli space, it is natural to ask what geometry it admits.
    In particular, we seek a metric on the marked moduli space so that the mapping class group acts isometrically on the marked moduli space.
    \begin{question}
        Is there a natural (geometrically defined) $\MCG (S)$-invariant metric on $\mathcal T (S)$?
    \end{question}
    
    \textbf{Organisation of the paper:}
    In Section \ref{sec:notation}, we fix notation and recall some basic facts of hyperbolic geometry and algebraic topology.
    In Section \ref{sec:homeomorphism_at_infinity}, we prove a key tool about homeomorphisms at infinity that will be used repeatedly.
    In Section \ref{sec:topology}, we define the topology on the marked moduli space using homeomorphisms at infinity.
    In Section \ref{sec:continuity}, we prove that the action of the mapping class group on the marked moduli space is continuous.
    In Section \ref{sec:embedding}, we prove that the topology on the marked moduli space agrees with the topology coming from injecting it into the $\mathup{PSL} (2, \RR)$-character space of the fundamental group. It follows that the topology of the marked moduli space reduces to the usual topology of Teichm\"uller space in the case of finite type surfaces.
    
    \textbf{Acknowledgements:} I would like to thank my advisor Prof.~Jason Manning for his constant support, encouragement and many helpful conversations about the subject matter.
    I would also like to thank Jason Manning and Olu Olorode for a careful reading of the manuscript, and would like to thank Yassin Chandran and Nick Vlamis for helpful conversations.
    
    \section{Notation and Background}
    \label{sec:notation}
    The \emph{hyperbolic plane} may be described as either the upper half plane or the unit disk in $\CC P^1 = \CC \cup \{\infty\}$, and is equipped with the Poincar\'e metric and the usual orientation.
    These two models of the hyperbolic plane are isometric.
    We denote the hyperbolic plane by $\HH^2$.
    It has a \emph{boundary at infinity} which is $\RR P^1 = \RR \cup \{\infty\}$ in the upper half plane model or $S^1$ in the unit disk model.
    We denote the boundary of the hyperbolic plane by $S^1$.
    $S^1$ inherits an orientation as the boundary of $\HH^2 \cup S^1$, and also possesses a circular order, so that $(0, 1, \infty)$ is a positively oriented triple of points.
    
    We denote the group of orientation preserving isometries of $\HH^2$ by $\mathup{PSL} (2, \RR)$, following the description of isometries of the upper half plane.
    Each isometry of $\HH^2$ extends to an orientation preserving homeomorphism of $S^1$.
    In fact, $\mathup{PSL} (2, \RR)$ acts freely and transitively on positively oriented triples of points in $S^1$.
    For a positively oriented triple $(a, b, c)$ in $S^1$, we denote by $M (a, b, c)$ the unique element of $\mathup{PSL} (2, \RR)$ that maps $0$ to $a$, $1$ to $b$ and $\infty$ to $c$.
    In particular, any isometry of $\HH^2$ is determined by its homeomorphism at infinity, so $\mathup{PSL} (2, \RR)$ is a subgroup of $\Homeo^+ (S^1)$, which is the group of orientation preserving homeomorphisms of the circle.
    We will freely use the same symbol to denote an isometry of $\HH^2$ as well as the induced homeomorphism at infinity.
    $\Homeo^+ (S^1)$ is a topological group with the compact-open topology, which agrees, for any metric on $S^1$, with the topology of uniform convergence.
    With this topology, $\mathup{PSL} (2, \RR)$ is a closed and embedded subgroup of $\Homeo^+ (S^1)$.
    
    Each complete, oriented hyperbolic surface $X$ has a (Riemannian, orientation preserving) universal cover $p : \HH^2 \to X$, and any other such cover is of the form $p \circ \sigma$ for some $\sigma \in \mathup{PSL} (2, \RR)$.
    We denote the deck group of the universal cover $p$ by $\Gamma_X$, which is a torsion-free subgroup of $\mathup{PSL} (2, \RR)$.
    In terms of the deck group, we can express $X$ as $\leftQ{\HH^2}{\Gamma_X}$.
    Given the cover $p$, any basepoint $x \in X$ and any choice of its lift $\tilde x \in \HH^2$, or equivalently, given any pointed universal cover $p : (\HH^2, \tilde x) \to (X, x)$, there is an isomorphism $\varphi : \pi_1 (X, x) \to \Gamma_X$, which is known as the \emph{holonomy representation} (see \cite[Section 1.3]{hatcher_2002_algebraic_topology}).
    If $\alpha$ is an oriented closed curve in $X$ based at $x$ and $\tilde \alpha$ is a (bi-infinite) lift of $\alpha$ passing through $\tilde x$, then the deck transformation $\varphi [\alpha]$ acts on $\tilde \alpha$ by translation.
    Note that the isomorphism $\varphi$ depends on the choice of lift $\tilde x$.
    If $\tilde x' \in \HH^2$ and $x' = p (\tilde x')$ are different basepoints with the corresponding holonomy representation $\varphi' : \pi_1 (X, x') \to \Gamma_X$, then the two holonomy representations satisfy the relation $\varphi' = \varphi \circ c_\beta$.
    Here $\beta$ is the projection $p (\tilde \beta)$ of a continuous path $\tilde \beta$ from $\tilde x$ to $\tilde x'$, and $c_\beta$ is `conjugation by $\beta$', that is, $c_\beta [\alpha] = [\beta \cdot \alpha \cdot \overline \beta]$ for all $[\alpha] \in \pi_1 (X, x')$.
    If $X, Y$ are complete hyperbolic surfaces with universal covers $p_X : \HH^2 \to X$ and $p_Y : \HH^2 \to Y$, then an arbitrary homeomorphism $f : X \to Y$ lifts to a homeomorphism $\tilde f : \HH^2 \to \HH^2$.
    This induces an isomorphism $f_* : \Gamma_X \to \Gamma_Y$ between the two deck groups, which is `conjugation by $\tilde f$'.
    Note that the isomorphism $f_*$ depends on the choice of the lift $\tilde f$, but we denote it by $f_*$, suppressing the lift from the notation.
    If $\varphi_X$ and $\varphi_Y$ (with respect to basepoints $\tilde y = \tilde f (\tilde x)$ and $y = p_Y (\tilde y)$) are the holonomy representations of $X$ and $Y$ respectively, then we have $f_* (\varphi_X [\alpha]) = \varphi_Y (f_*[\alpha])$.
    Here the $f_*$ on the left hand side is the isomorphism between the deck groups, whereas on the right hand side, $f_*$ is the $\pi_1$ functor.
    In other words, if $\gamma \in \Gamma_X$ is the holonomy around the oriented closed $\alpha \subset X$, then $f_* (\gamma) \in \Gamma_Y$ is the holonomy around the oriented closed curve $f (\alpha)$.
    Another fact we recall is that the set of all lifts of $f$ is $\tilde f \circ \Gamma_X$, which also equals $\Gamma_Y \circ \tilde f$.
    
    As a torsion-free discrete subgroup of $\mathup{PSL} (2, \RR)$, $\Gamma_X$ cannot have any elliptic isometries of $\HH^2$.
    Thus all the elements of $\Gamma_X$ are either hyperbolic or parabolic isometries of $\HH^2$.
    For a hyperbolic isometry $\gamma \in \mathup{PSL} (2, \RR)$, we denote by $\gamma_\infty$, the unique attracting fixed point of $\gamma$ on $S^1$, also known as its \emph{sink}.
    If $\gamma$ is the holonomy around an oriented closed curve $\alpha$ in $X$, then the the (bi-infinite) lift $\tilde \alpha$ passing through $\tilde x$ joins the point $(\gamma^{-1})_\infty$ to the point $\gamma_\infty$.
    We denote by $(\Gamma_X)_\infty$ the set of sinks of all the hyperbolic elements of $\Gamma_X$, which is easily seen to be $\Gamma_X$-invariant.
    Finally we recall that if $X$ is a complete, Nielsen-convex hyperbolic surface, then $(\Gamma_X)_\infty$ is a dense subset of $S^1$ (see \cite[Corollary 3.4.5]{hubbard_2006_teichmueller}).
    
    Finally, we recall the \emph{Douady-Earle extension}.
    This is a construction that extends homeomorphisms of $S^1$ to homeomorphisms of $\HH^2$ in a conformally natural way.
    That is, $\mathup{DE} : \Homeo^+ (S^1) \to \Homeo^+ (\HH^2)$ is a function such that $\mathup{DE} (\sigma_1 \circ f \circ \sigma_2) = \sigma_1 \circ \mathup{DE} (f) \circ \sigma_2$ for every $f \in \Homeo^+ (S^1)$ and every $\sigma_1, \sigma_2 \in \mathup{PSL} (2, \RR)$.
    Further, $\mathup{DE}$ is continuous when $\Homeo^+ (\HH^2)$ is given the compact-open topology (see \cite[Proposition 2, Section 4]{douady_earle_1986_conformally_natural_extension}).
    
    \section{Key tool: The Homeomorphism at Infinity}
    \label{sec:homeomorphism_at_infinity}
    
    In this section, we provide a detailed proof of Proposition \ref{prop:homeomorphism_at_infinity}, which is actually a special case of \cite[Proposition 5.3]{thurston_1986_earthquakes}.
    In that paper, Thurston used it to reduce the earthquake theorem for general hyperbolic surfaces to a version of the earthquake theorem for their universal covers (that is, the hyperbolic plane).
    
    \begin{prop}[Homeomorphism at infinity] \label{prop:homeomorphism_at_infinity}
        Let $X, Y$ be complete hyperbolic surfaces. Fix universal covers $p_X : \HH^2 \to X$, $p_Y : \HH^2 \to Y$, and let $\Gamma_X, \Gamma_Y$ be the respective deck groups.
        Let $f : X \to Y$ be a homeomorphism with a lift $\tilde f : \HH^2 \to \HH^2$ to the universal covers.
        Assume that both $X$ and $Y$ are Nielsen-convex.
        Then
        \begin{enumerate}
            \item \label{prop:homeomorphism_at_infinity_extends} $\tilde f$ extends to a homeomorphism at infinity $\partial \tilde f : S^1 \to S^1$.
            \item \label{prop:homeomorphism_at_infinity_equivariant} $\partial \tilde f$ is $\Gamma_X$-equivariant.
            That is, for every $\gamma \in \Gamma_X$, we have $(\partial \tilde f) \circ \gamma = f_* (\gamma) \circ (\partial \tilde f)$.
            \item \label{prop:homeomorphism_at_infinity_isotopy} Further, if $f_t$ is an isotopy which lifts to an isotopy $\tilde f_t$, then $\partial \tilde f_0 = \partial \tilde f_1$.
            \item \label{prop:homeomorphism_at_infinity_composition} Suppose $Z$ is another complete, Nielsen-convex hyperbolic surface with a universal cover $p_Z : \HH^2 \to Z$. If $g : Y \to Z$ is another homeomorphism with lift $\tilde g$ then $\partial (\tilde g \circ \tilde f) = (\partial \tilde g) \circ (\partial \tilde f)$.
            \item \label{prop:homeomorphism_at_infinity_isometrycomp} If $\sigma \in \mathup{PSL}(2, \RR)$, then $\sigma \circ \tilde f$ also extends to a homeomorphism at infinity, and $\partial (\sigma \circ \tilde f) = \sigma \circ (\partial \tilde f)$.
        \end{enumerate}
    \end{prop}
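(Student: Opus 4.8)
The plan is to construct the boundary map $\partial\tilde f$ by hand on a dense subset of $S^1$ and then to extend it, and to deduce parts (2)--(5) formally from (1) together with one uniqueness principle: since $\HH^2$ is dense in $\overline{\HH^2} = \HH^2 \cup S^1$ and $S^1$ is Hausdorff, a continuous extension of $\tilde f$ to $\overline{\HH^2}$ is unique. Throughout I use the identity $\tilde f \circ \gamma = f_*(\gamma) \circ \tilde f$ on $\HH^2$, which is the definition of $f_* : \Gamma_X \to \Gamma_Y$ recalled in Section \ref{sec:notation}, and I write $D_X = (\Gamma_X)_\infty$, $D_Y = (\Gamma_Y)_\infty$ for the sets of sinks, which are dense in $S^1$ by Nielsen-convexity.

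For part (1), I first observe that $f_*$ carries hyperbolic elements to hyperbolic elements: in a Nielsen-convex surface $\Gamma$ is of the first kind and has no funnels, so its parabolics are exactly the elements peripheral to a puncture (an isolated planar end, necessarily a cusp), and since the homeomorphism $f$ preserves punctures and peripheral classes it preserves parabolicity. This lets me define $\partial\tilde f \colon D_X \to D_Y$ by $\gamma_\infty \mapsto (f_*\gamma)_\infty$, which is well defined and injective because in a torsion-free Fuchsian group two hyperbolics sharing a fixed point share an axis and a primitive root, and $f_*$ is an isomorphism. The crucial geometric input is the image of an axis: for hyperbolic $\gamma$ the powers $\gamma^n z$ converge to $\gamma_\infty$ and $(\gamma^{-1})_\infty$ as $n \to \pm\infty$, so by equivariance $\tilde f(\gamma^n z) = (f_*\gamma)^n \tilde f(z)$ converges to $(f_*\gamma)_\infty$ and $(f_*\gamma^{-1})_\infty$; since $\tilde f$ is a proper homeomorphism, $\tilde f(A_\gamma)$ is a properly embedded line whose two ends accumulate exactly at these two boundary points. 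Two axes cross in $\HH^2$ iff their endpoint pairs link on $S^1$, crossing is preserved by the homeomorphism $\tilde f$, and a Jordan-arc argument then shows $\partial\tilde f$ preserves the linking relation of endpoint pairs; as $\tilde f$ is orientation preserving, this upgrades to preservation of the cyclic order on $D_X$. An orientation-preserving, cyclic-order-preserving bijection between dense subsets of $S^1$ extends uniquely to an orientation-preserving homeomorphism of $S^1$, which I take to be $\partial\tilde f$.

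It remains to check that this $\partial\tilde f$ is genuinely the continuous extension of $\tilde f$, that is, that $\tilde f \cup \partial\tilde f$ is continuous at each $\xi \in S^1$. Here I use that the fixed-point pairs of hyperbolic elements are dense in $(S^1 \times S^1)\setminus\Delta$ (minimality of the action on the full limit set): given a target arc $W \ni \partial\tilde f(\xi)$, I choose a hyperbolic $\eta$ whose axis endpoints straddle $\xi$ closely, so that $\tilde f(A_\eta)$ is a proper arc with both endpoints in $W$ cutting off a region of $\HH^2$ whose boundary arc lies in $W$. The side of $A_\eta$ containing $\xi$ is then a neighbourhood of $\xi$ in $\overline{\HH^2}$ mapped by $\tilde f$ into that region, and shrinking via density yields continuity; applying the same construction to $\tilde f^{-1}$, a lift of the homeomorphism $f^{-1}$ between Nielsen-convex surfaces, shows $\partial\tilde f$ is a homeomorphism. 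I expect this passage -- converting the combinatorial order/linking data into honest continuity of the extension -- to be the main obstacle, the care being concentrated in the density of fixed-point pairs and the separation arguments.

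The remaining parts are then formal. For (2), both sides of $\tilde f \circ \gamma = f_*(\gamma)\circ\tilde f$ extend continuously to $\overline{\HH^2}$, by (1) and since isometries extend, and they agree on the dense set $\HH^2$, hence on $S^1$. For (4), $\partial\tilde g \circ \partial\tilde f$ is a continuous extension of $\tilde g \circ \tilde f$, so by uniqueness it equals $\partial(\tilde g\circ\tilde f)$, which exists by (1) applied to $g\circ f$. For (5), $\sigma$ extends to the homeomorphism $\sigma|_{S^1}$ of $\overline{\HH^2}$, so $\sigma\circ(\tilde f\cup\partial\tilde f)$ is a continuous extension of $\sigma\circ\tilde f$, forcing $\partial(\sigma\circ\tilde f) = \sigma\circ\partial\tilde f$. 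Finally for (3), the conjugate $\tilde f_t \circ \gamma \circ \tilde f_t^{-1} = (f_t)_*(\gamma)$ lies in the discrete group $\Gamma_Y$ and varies continuously in $t$, hence is constant; therefore $\partial\tilde f_0$ and $\partial\tilde f_1$ agree on the dense set $D_X$ and, being continuous, are equal.
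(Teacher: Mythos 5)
Your proposal is correct and its skeleton coincides with the paper's: establish that $f_*$ is type preserving, define $\partial\tilde f$ on the dense set of sinks by $\gamma_\infty \mapsto (f_*\gamma)_\infty$, prove monotonicity, and extend using density of the image set of sinks. The differences are in the execution, and each route buys something. For type preservation you invoke the topological characterisation of parabolics as the elements peripheral to isolated planar ends; the implication \emph{peripheral $\Rightarrow$ parabolic} is exactly where Nielsen-convexity enters and is the entire content of the paper's Lemma \ref{lem:type_preserving} (a peripheral element with hyperbolic holonomy would bound a funnel, i.e.\ a visible end), so your version is the same argument compressed into the word ``exactly''. For the image of an axis you use north--south dynamics of $(f_*\gamma)^n$ applied to the images of fundamental segments, which is cleaner than the paper's Lemma \ref{lem:extension_alternate} (a uniform-continuity/injectivity-radius estimate), though that lemma proves the stronger statement about arbitrary asymptotic rays. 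For monotonicity you use linking of axis-endpoint pairs where the paper uses ideal triangles; note that disjointness of image arcs only gives \emph{unlinked $\Rightarrow$ unlinked} directly, so you must run the Jordan-arc argument for $\tilde f^{-1}$ as well to obtain the converse, and the final ``upgrade to cyclic order via orientation'' is precisely the step that the paper's boundary-orientation computation in Lemma \ref{lem:circular_order_preserving} carries out --- this is the one place your sketch asserts rather than proves. Finally, you verify that $\tilde f \cup \partial\tilde f$ is continuous on all of $\overline{\HH^2}$ (via nested compact regions cut off by image arcs; this works, since the nested compacta intersect only in the boundary point) and then deduce parts (2), (4) and (5) from uniqueness of continuous extensions off the dense subset $\HH^2 \subset \overline{\HH^2}$, whereas the paper never establishes continuity of the combined map and instead checks each identity on the dense subset $(\Gamma_X)_\infty \subset S^1$. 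Your route costs an extra continuity lemma but makes the remaining parts genuinely formal, and it justifies the phrase ``extends to a homeomorphism at infinity'' more literally than the paper does.
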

    
    \begin{remark}
        Note that if $X, Y$ are closed surfaces, then $\tilde f$ is a quasiisometry and so extends to a quasisymmetric homeomorphism of $S^1$.
        However, for infinite type surfaces $X, Y$, $\tilde f$ may not be a quasiisometry.
        The proposition asserts that it still extends to $S^1$.
    \end{remark}
    \begin{remark}
        Note that the homeomorphism $\partial \tilde f$ depends on the choice of the lift $\tilde f$.
        The other choices of the lift of $f$ are $\gamma \circ \tilde f$ for $\gamma \in \Gamma_Y$ or $\tilde f \circ \gamma$ for $\gamma \in \Gamma_X$, leading to the homeomorphism at infinity $\gamma \circ (\partial \tilde f)$ or $(\partial \tilde f) \circ \gamma$ by part (\ref{prop:homeomorphism_at_infinity_isometrycomp}) of the proposition.
    \end{remark}

    \begin{proof}[Proof of Proposition \ref{prop:homeomorphism_at_infinity}]
        The outline of the proof is as follows:
        Let $f_* : \Gamma_X \to \Gamma_Y$ be the isomorphism of deck groups induced by $\tilde f$.
        First, in Lemma \ref{lem:type_preserving}, we prove that $f_*$ preserves the type (hyperbolic or parabolic) of elements of the deck group.
        Next, we define $\partial \tilde f$ on the set $(\Gamma_X)_\infty$ of sinks of hyperbolic elements of $\Gamma_X$.
        Recall that $(\Gamma_X)_\infty$ is a dense subset of $S^1$.
        Then, in Lemma \ref{lem:circular_order_preserving}, we show that $\partial \tilde f$ is monotonic, that is, it preserves that circular order of points in $S^1$.
        Finally we show that $\partial \tilde f$ has no jump discontinuities, allowing us to extend it to a unique homeomorphism of $S^1$.
        
        \begin{lemma}\label{lem:type_preserving}
            $f_*$ is a type preserving isomorphism.
        \end{lemma}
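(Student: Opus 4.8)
The plan is to find a purely topological characterisation of parabolicity that a homeomorphism must respect. This is forced on us because $f$ is only a homeomorphism and, for infinite type surfaces, need not be a quasi-isometry, so no coarse-geometric quantity (translation length, etc.) transfers directly to $f_*(\gamma)$. Recall from the background that if $\gamma \in \Gamma_X$ is the holonomy of an oriented closed curve $\alpha \subset X$, then $f_*(\gamma)$ is the holonomy of $f(\alpha)$. Thus $f_*$ is determined on conjugacy classes by free homotopy classes of loops, and since the type of an element of $\mathup{PSL}(2,\RR)$ is a conjugacy invariant, it suffices to track the type of the free homotopy class of a loop. Moreover $\Gamma_X, \Gamma_Y$ are torsion-free, so $f_*$ carries nontrivial elements to nontrivial elements and elliptics never arise; every element is therefore hyperbolic or parabolic. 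Applying the argument below to $f^{-1}$ with the lift $\tilde f^{-1}$ (which induces $f_*^{-1}$), it is enough to prove that $f_*$ carries parabolics to parabolics.

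First I would characterise parabolics topologically. In a torsion-free Fuchsian group the maximal parabolic subgroups are infinite cyclic, so every parabolic is a nonzero power of a primitive parabolic, and $f_*$, being an isomorphism, carries $n$-th powers to $n$-th powers; hence it suffices to treat primitive parabolics. I claim that a primitive $\gamma \in \Gamma_X$ is parabolic if and only if its free homotopy class is represented by a \emph{simple} closed curve bounding an embedded once-punctured disk (equivalently, cutting off an isolated planar end) in $X$. For the forward direction, each cusp of $X$ has an embedded horoball neighbourhood whose quotient is a once-punctured disk, with the primitive parabolic represented by the simple horocyclic core. The reverse direction is exactly where Nielsen-convexity enters: if $\alpha$ is a simple closed curve bounding a once-punctured disk, its core holonomy $\gamma$ is hyperbolic or parabolic, and were it hyperbolic the isolated planar end cut off by $\alpha$ would be a funnel, i.e.\ a component of the complement of the convex core; but by Proposition \ref{prop:nielsen_convex_alternatives} the convex core of $X$ is all of $X$ (equivalently, the ideal boundary is empty and there are no visible ends), so no funnel occurs and $\gamma$ is parabolic.

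Now I apply $f$. A homeomorphism sends simple closed curves to simple closed curves, embedded once-punctured disks to embedded once-punctured disks, and isolated planar ends of $X$ bijectively to those of $Y$. Hence if $\gamma$ is a primitive parabolic with associated simple curve $\alpha$ bounding a once-punctured disk, then $f(\alpha)$ is a simple closed curve bounding a once-punctured disk in $Y$. Since $Y$ is also Nielsen-convex, the characterisation applies in $Y$, so the holonomy $f_*(\gamma)$ of $f(\alpha)$ is a primitive parabolic. Combined with the reduction to powers, $f_*$ carries every parabolic to a parabolic, and the symmetric argument for $f^{-1}$ shows it carries every hyperbolic to a hyperbolic. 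This proves $f_*$ is type-preserving.

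The main obstacle, and the place demanding the most care, is the reverse direction of the characterisation in the infinite-type setting. One is tempted to say ``parabolic iff the loop can be pushed into every neighbourhood of an end'', but this fails for infinite type surfaces: a loop around a handle sitting far out a complicated end is hyperbolic yet peripheral in this loose sense. The fix is to insist on \emph{isolated planar} ends (punctures), whose once-punctured-disk neighbourhoods have fundamental group $\ZZ$, and to invoke Nielsen-convexity to exclude funnels so that such ends are genuinely cusps. Pinning down this exact equivalence, rather than some coarser peripherality condition, is the crux of the lemma.
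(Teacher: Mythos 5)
Your proof is correct and rests on exactly the two facts the paper's proof uses: a primitive parabolic has an embedded cusp neighbourhood bounded by a simple horocycle (a once-punctured disk), and a simple closed curve with hyperbolic holonomy cutting off an annular end would bound a funnel, which Nielsen-convexity forbids. You package these as a homeomorphism-invariant topological characterisation of primitive parabolics and argue directly, whereas the paper argues by contradiction by pulling the cusp annulus back through $f^{-1}$ and locating the funnel in $X$; the geometric content is essentially the same.
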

        \begin{proof}
            Since powers of hyperbolic and parabolic isometries of $\HH^2$ are hyperbolic and parabolic respectively, it is enough to prove that $f_*$ preserves the type of primitive elements.
            Suppose, for the sake of contradiction, that there is a primitive element $\gamma \in \Gamma_X$ such that $\gamma$ and $f_* (\gamma) \in \Gamma_Y$ are not of the same type.
            As the torsion-free discrete groups $\Gamma_X$ and $\Gamma_Y$ cannot have elliptics, one of $\gamma$ and $f_* (\gamma)$ is hyperbolic and the other is parabolic.
            Replacing $f$ with $f^{-1}$ if necessary, assume that $\gamma$ is hyperbolic and $f_* (\gamma)$ is parabolic.
            Since $f_*$ is an isomorphism, $f_* (\gamma)$ is a primitive parabolic element in the group $\Gamma_Y$.
            Thus there is a horodisk in $\HH^2$ about its fixed point which projects, under $p_Y$, to a cusp neighbourhood $U$ of an end $e$ of $Y$.
            \begin{figure}\label{fig:type_preserving_isomorphism}
                \centering
                \def\svgwidth{\textwidth}
                \resizebox{0.6\textwidth}{!}{\Large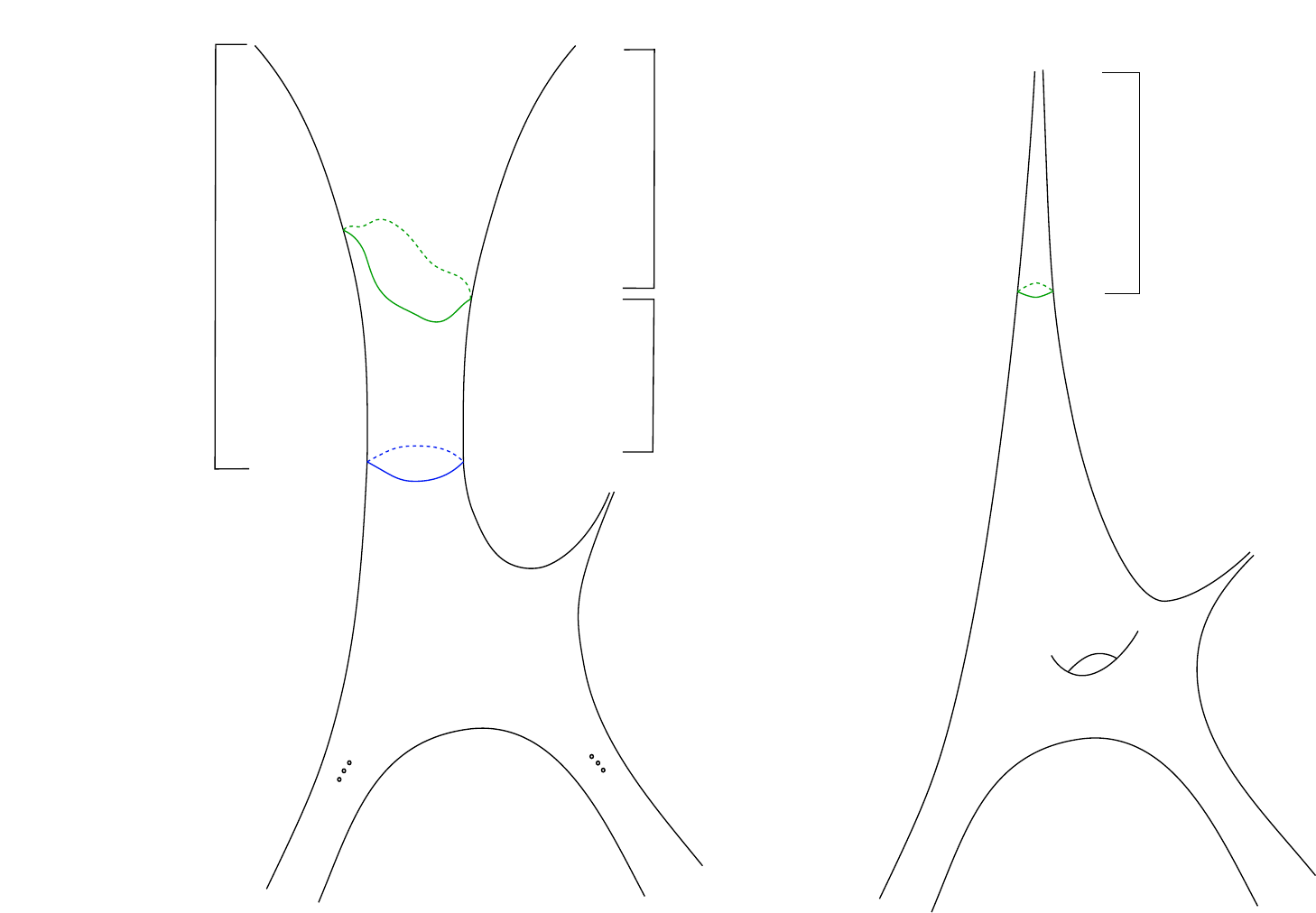}
                \caption{$f_*$ is a type preserving isomorphism}
            \end{figure}
            See Figure 2. 
            Let $L \subset \HH^2$ be the axis of the hyperbolic element $\gamma$ and $l \subset X$ be its projection under $p_X$.
            Then $l$ is a closed geodesic of $X$ and hence compact.
            Thus its image $f (l)$ is also compact.
            Hence there exists a horocyclic oriented closed curve $\alpha \subset U \subset Y$, in a sufficiently small neighbourhood of the end $e$, that is disjoint from $f (l)$, and such that the holonomy around $\alpha$ is $f_* (\gamma)$.
            Since $f_* (\gamma)$ is a primitive element, $\alpha$ is a simple closed curve.
            Further, $\alpha$ cuts $Y$ into two components, one of which is (topologically) an annulus $A_1 \subset U$, and the other contains $f (l)$.
            Since $f$ is a homeomorphism, $f^{-1} (\alpha)$ cuts $X$ into two components, one of which is the annulus $A_2 = f^{-1} (A_1)$, and the other contains $l$.
            
            Now $l$ and $f^{-1} (\alpha)$ are closed curves in the same free homotopy class, since the holonomy around both is $\gamma$, and $l$ is the geodesic representative in this class.
            Since $f^{-1} (\alpha)$ is a simple curve, so is $l$.
            Further, $l$ and $f^{-1} (\alpha)$ are disjoint because $f (l)$ and $\alpha$ are disjoint.
            $l$ and $f^{-1} (\alpha)$ are disjoint and isotopic, and hence bound an annulus $A_3 \subset X \setminus A_2$.
            Thus $l$ cuts $X$ into two components, one of which is $A = A_2 \cup f^{-1} (\alpha) \cup A_3$, an annulus.
            The closure of $A$ is a complete hyperbolic surface with geodesic boundary $l$.
            Topologically it is an annulus with one boundary component, and thus is a hyperbolic funnel.
            That is, it is the quotient of the half plane in $\HH^2$ bounded by $L$ by the isometry $\gamma$ translating along $L$.
            But this means that $X$ has a visible end $f_*^{-1} (e)$.
            Here $f_*$ is the induced map between spaces of ends induced by the map $f$.
            In other words, $A$ is not part of the convex core $C(X)$ of $X$.
            This contradicts our assumption that $X$ is a complete, Nielsen-convex hyperbolic surface.
            We conclude that $f_*$, the induced map between deck groups, is type preserving.
        \end{proof}
        
        Now we define $\partial \tilde f$ on a dense subset of $S^1$.
        Let $(\Gamma_X)_\infty$ and $(\Gamma_Y)_\infty$ be the sets of sinks of all the hyperbolic elements of $\Gamma_X$ and $\Gamma_Y$ respectively.
        Suppose $\gamma \in \Gamma_X$ is a hyperbolic element.
        By Lemma \ref{lem:type_preserving}, $f_*$ is type preserving so $f_* (\gamma)$ is also a hyperbolic element.
        \begin{definition}
            We define $\partial \tilde f (\gamma_\infty)$ to be the sink $(f_* (\gamma))_\infty$.
        \end{definition}
        This is well defined, because if $\gamma_1, \gamma_2$ are hyperbolic elements in the discrete group $\Gamma_X$ that have the same sink, then they are in fact positive powers of a hyperbolic element $\gamma_3 \in \Gamma_X$.
        Thus $f_* (\gamma_1)$ and $f_* (\gamma_2)$ are positive powers of $f_* (\gamma_3)$ and hence all three have the same sink.
        Similar considerations for $\tilde f^{-1}$ show that the function $\partial \tilde f$ so defined is a bijection from $(\Gamma_X)_\infty$ to $(\Gamma_Y)_\infty$.
        
        There is another way of viewing the function $\partial \tilde f$ as follows.
        \begin{lemma} \label{lem:extension_alternate}
            Let $L$ be an oriented geodesic line in $\HH^2$ whose forward endpoint at infinity is the sink $\gamma_\infty$ of a hyperbolic element $\gamma \in \Gamma_\infty$.
            Then the oriented curve $\tilde f (L)$ has forward endpoint $(f_*(\gamma))_\infty$.
        \end{lemma}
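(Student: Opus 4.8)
The plan is to reduce everything to the north--south dynamics of the hyperbolic isometry $\delta := f_*(\gamma)$. By Lemma \ref{lem:type_preserving} the element $\delta$ is again hyperbolic, so its sink $\delta_\infty = (f_*(\gamma))_\infty$ and the attracting behaviour of its forward iterates are available. The only structural fact about $\tilde f$ that I need is the equivariance $\tilde f \circ \gamma = \delta \circ \tilde f$ coming from $f_*$ being conjugation by $\tilde f$, which gives $\tilde f \circ \gamma^n = \delta^n \circ \tilde f$ for all $n$. Here ``forward endpoint of $\tilde f(L)$'' means: if $R(t)$, $t \in [0,\infty)$, parametrises the forward ray of $L$ with $R(t) \to \gamma_\infty$, then I must show $\tilde f(R(t)) \to \delta_\infty$ in $\overline{\HH^2} = \HH^2 \cup S^1$.

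First I would record that the forward ray $R$ of $L$ is asymptotic to the axis $A$ of $\gamma$, since the two geodesics share the ideal endpoint $\gamma_\infty$; in particular $d(R(t), A) \to 0$, so $\sup_{t \ge t_1} d(R(t), A) < \infty$ for some $t_1$, and the nearest-point projection $\pi(R(t))$ onto $A$ tends to $\gamma_\infty$ along $A$. The key step is then to trap the ray inside a single compact fundamental piece. Parametrising $A$ by arc length so that $\gamma$ acts as translation by its length $\ell$ towards $\gamma_\infty$, I set $n(t) = \lfloor s(t)/\ell \rfloor$, where $s(t)$ is the arc-length coordinate of $\pi(R(t))$. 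Because $\gamma$ is an isometry that preserves $A$, commutes with $\pi$, and preserves distance to $A$, the point $k(t) := \gamma^{-n(t)}(R(t))$ has projection lying in the fixed segment $A([0,\ell])$ and distance to $A$ bounded by $\sup_{t \ge t_1} d(R(t), A)$. Hence $k(t)$ lies in a fixed compact set $K \subset \HH^2$, while $n(t) \to \infty$.

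The conclusion is then immediate: by equivariance $\tilde f(R(t)) = \tilde f(\gamma^{n(t)} k(t)) = \delta^{n(t)}(\tilde f(k(t)))$. The image $\tilde f(K)$ is compact in $\HH^2$, hence bounded away from the repelling fixed point of $\delta$ on $S^1$, so the north--south dynamics of the hyperbolic isometry $\delta$ give $\delta^{n}(q) \to \delta_\infty$ uniformly for $q$ in the compact set $\tilde f(K)$. Applying this with $q = \tilde f(k(t))$ and $n = n(t) \to \infty$ yields $\tilde f(R(t)) \to \delta_\infty = (f_*(\gamma))_\infty$, as desired.

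I expect the main obstacle to be the second step: rewriting arbitrary points of the forward ray of $L$ as uniformly bounded $\gamma$-translates of points of one compact set. This is exactly where the asymptoticity of $L$ to the axis $A$ is essential, so that the distance-to-$A$ coordinate stays bounded, and it is also what lets me sidestep the fact, emphasised in the remark above, that $\tilde f$ need not be a quasi-isometry: I never compare $\tilde f$ at nearby points, but only transport the single fixed compact set $\tilde f(K)$ by the isometries $\delta^n$. Once the ray is trapped in $\gamma^{n(t)}(K)$, the remaining limit is the standard uniform-on-compacts attraction of a hyperbolic element towards its sink, and no further input about the surfaces is needed beyond the type-preservation already established in Lemma \ref{lem:type_preserving}.
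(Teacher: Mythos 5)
Your proof is correct, but it takes a genuinely different route from the paper's. The paper argues in two steps: first it treats the case where $L$ is the axis of $\gamma$, observing that $\tilde f (L)$ is invariant under $f_* (\gamma)$ and is ``translated'' along itself, hence joins the source to the sink of $f_* (\gamma)$; then, for an arbitrary line $L'$ asymptotic to the axis, it descends to the quotient surfaces and uses compactness of the projected $1$-neighbourhood of the axis, a positive lower bound on the injectivity radius over $f (K)$, and uniform continuity of $f|_K$ to show that $\tilde f (L')$ stays within a uniformly bounded distance of $\tilde f (L)$ and therefore shares its forward endpoint. You instead stay entirely in $\HH^2$: you fold the forward ray of $L$ into a fixed compact set $K$ by applying $\gamma^{-n(t)}$ (using that the ray is at bounded distance from the axis and that $\gamma$ translates the axis), transport $K$ by $\tilde f$ using the equivariance $\tilde f \circ \gamma^{n} = (f_*(\gamma))^{n} \circ \tilde f$, and finish with the uniform north--south dynamics of the hyperbolic isometry $f_* (\gamma)$ on the compact set $\tilde f (K)$. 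Your argument is more self-contained: it needs no injectivity-radius or uniform-continuity input from the quotient surfaces, it handles a general $L$ in one pass rather than via the axis case, and it even supplies the justification that the paper's axis case leaves implicit (namely, that a properly embedded line invariant under a hyperbolic isometry accumulates at its fixed points --- which is exactly your north--south dynamics step applied to a compact fundamental segment). What the paper's version buys in exchange is the slightly stronger quantitative conclusion that $\tilde f (L')$ fellow-travels $\tilde f (L)$ within a uniform distance $\varepsilon$, though that extra strength is not needed for the lemma. Both arguments correctly use only that $\tilde f$ is an equivariant homeomorphism, never that it is a quasi-isometry.
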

        \begin{proof}
            First consider the case where $L$ is the axis of $\gamma$.
            Then $\gamma$ acts by translation along $L$.
            Hence $f_* (\gamma)$ acts by translation along $\tilde f (L)$.
            Therefore $\tilde f (L)$ joins the source of $\gamma$ to the sink of $\gamma$.
            That is, $(f_* (\gamma))_\infty$ is the forward endpoint of $\tilde f (L)$.
            Next, let $L'$ be any oriented geodesic line whose forward endpoint is $\gamma_\infty$.
            We will show that the forward endpoint of $\tilde f (L')$ is $(f_* (\gamma))_\infty$.
            
            Let $\tilde K$ denote the $1$-neighbourhood of $L$ in $\HH^2$, that is, the set of points of distance at most $1$ from $L$.
            Let $K = p_X (\tilde K)$.
            The set $K$ is compact, and so is its image $f (K) \subset Y$.
            Since injectivity radius is a continuous function on any Riemannian manifold, the injectivity radius of points in $f (K)$ is bounded below by a number $\varepsilon > 0$.
            Also since $K$ is compact, $f|_K$ is uniformly continuous.
            Let $\delta > 0$ be such that for all $x, y \in K$, $d_X (x, y) < \delta$ implies $d_Y (f (x), f (y)) < \varepsilon$.
            Reducing $\delta$ if necessary, assume that $\delta \le 1$.
            
            \begin{figure}\label{fig:homeomorphism_at_infinity}
                \centering
                \def\svgwidth{\textwidth}
                \resizebox{\textwidth}{!}{\footnotesize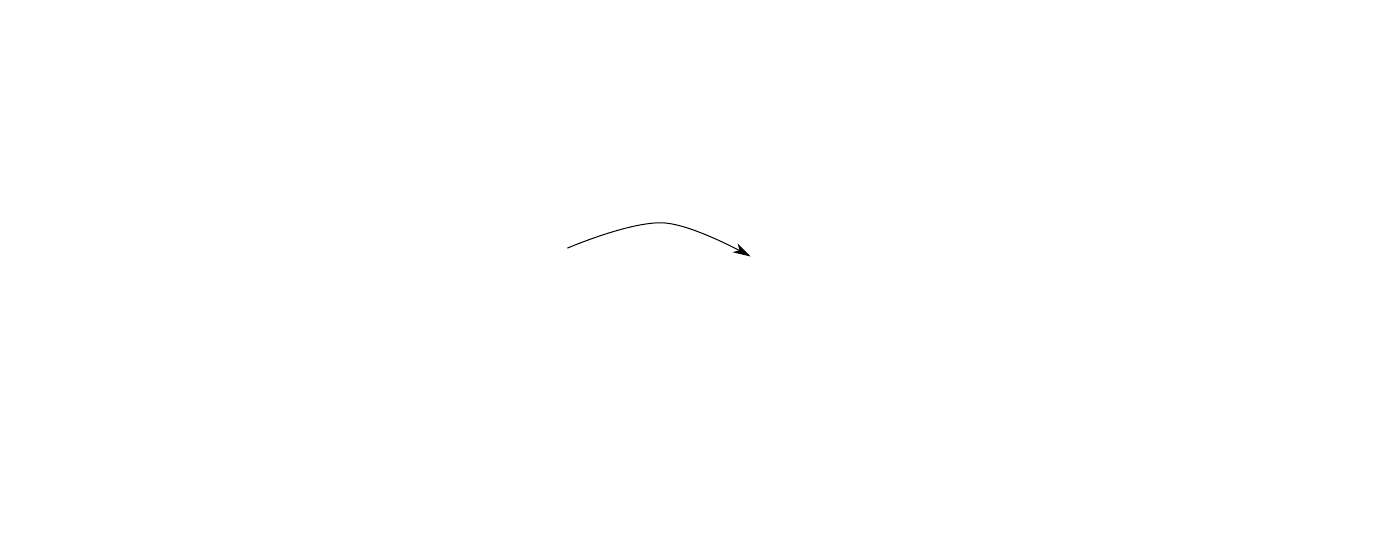}
                \caption{$\partial \tilde f$ for the sinks $\gamma_\infty$.}
            \end{figure}
            Since $L$ and $L'$ have the same forward endpoint, they are asymptotic.
            Let $L'_1$ be the subray of $L'$ that is within the $\delta$-neighbourhood of $L$.
            For any point $\tilde y \in L'_1$, there exists a point $\tilde x \in L$ such that $d_{\HH^2} (\tilde x, \tilde y) < \delta$.
            Since $\delta \le 1$, we have $d_{\HH^2} (\tilde x, \tilde y) < 1$ and so $\tilde x, \tilde y \in \tilde K$.
            Let $\alpha$ be the geodesic segment joining $\tilde x$ to $\tilde y$, and let $x = p_X (\tilde x)$, $y = p_X (\tilde y)$.
            See Figure 3. 
            Since $p_X$ is a local isometry, $d_X (x, y) \le d_{\HH^2} (\tilde x, \tilde y) < \delta$.
            Hence by uniform continuity as above, $d_Y (f (x), f (y)) < \varepsilon$.
            In fact, the diameter of $p_X (\alpha)$ is less than $\delta$ so $f (p_X (\alpha))$ is a curve that lies entirely in the $\varepsilon$-neighbourhood of the point $f (x)$.
            Since the injectivity radius at $f (x)$ is at least $\varepsilon$, $p_Y$ is an isometry between the balls of radius $\varepsilon$ centred at the points $\tilde f (\tilde x)$ in $\HH^2$ and its $p_Y$-image $f (x)$ in $Y$.
            Hence $f (p_X (\alpha))$ lifts to a curve contained in the ball of radius $\varepsilon$ centred at the point $\tilde f (\tilde x)$ and whose endpoint is $\tilde f (\tilde y)$.
            That is to say, $\tilde f (\tilde y)$ is contained in the ball of radius $\varepsilon$ centred at the point $\tilde f (\tilde x)$.
            In other words, $d_{\HH^2} (\tilde f (\tilde x), \tilde f (\tilde y)) < \varepsilon$.
            But $\tilde f (\tilde y)$ is an arbitrary point on $\tilde f (L'_1)$ and $\tilde f (\tilde x)$ lies on $\tilde f (L)$, so we conclude that the curve $\tilde f (L'_1)$ is at a bounded distance from $\tilde f (L)$.
            Therefore $\tilde f (L'_1)$ limits to a point on the boundary $S^1$ and further forward endpoints of $\tilde f (L)$ and $\tilde f (L'_1)$ are the same, namely $(f_* (\gamma))_\infty$.
            Thus $\tilde f (L')$ also has forward endpoint $(f_* (\gamma))_\infty$.
        \end{proof}
        
        \begin{lemma}\label{lem:circular_order_preserving}
            $\partial \tilde f$ (so far defined on the dense subset $(\Gamma_X)_\infty$) is monotonic, that is, it preserves the circular order on $S^1$.
            In particular, if $a, b, c \in (\Gamma_X)_\infty$ and $(a, b, c)$ is a positively oriented triple, then so is the triple $(\partial \tilde f (a), \partial \tilde f (b), \partial \tilde f(c))$.
        \end{lemma}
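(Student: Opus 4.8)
The plan is to detect the circular order of a triple of ideal points by separating the disk with a single oriented geodesic, and to exploit the fact that $\tilde f$, being a lift of an orientation-preserving homeomorphism, is itself an orientation-preserving homeomorphism of $\HH^2$. The only places where the hypotheses enter are Lemma \ref{lem:extension_alternate} (to locate the ideal endpoints of images of geodesics) and the density of $(\Gamma_X)_\infty$ in $S^1$.

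Let $(a,b,c)$ be a positively oriented triple in $(\Gamma_X)_\infty$. First I would record the elementary criterion that $(a,b,c)$ is positively oriented exactly when $b$ lies in the open boundary arc running positively (counterclockwise) from $a$ to $c$; equivalently, orienting the geodesic line $L$ from $a$ to $c$, the point $b$ lies on one definite side $H_+$ of $L$, namely the component of $\HH^2 \setminus L$ lying to the left of $L$. Using density of $(\Gamma_X)_\infty$, I would choose a fourth sink $b' \in (\Gamma_X)_\infty$ in that same open arc, and let $M$ be the geodesic line from $b'$ to $b$. Since both endpoints of $M$ lie on the arc bounding $H_+$, the entire line $M$ is contained in $H_+$.

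Next I would push everything through $\tilde f$. Because the forward endpoints $c$, $a$, $b$, $b'$ are all sinks, Lemma \ref{lem:extension_alternate} (applied to $L$ and to $M$ in both orientations) shows that $\tilde f(L)$ is a properly embedded arc converging to the two distinct ideal points $\partial \tilde f(a)$ and $\partial \tilde f(c)$, and that $\tilde f(M)$ converges forward to $\partial \tilde f(b)$. As $\tilde f$ is a homeomorphism of $\HH^2$ it is proper, so $\tilde f(L)$ together with its two ideal endpoints is a Jordan arc in the closed disk $\HH^2 \cup S^1$; by the Jordan--Schoenflies theorem it separates the closed disk into two topological disks, whose boundaries meet $S^1$ in the two arcs determined by $\partial \tilde f(a)$ and $\partial \tilde f(c)$. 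Since $\tilde f$ is orientation preserving, it carries $H_+$ (the left side of $L$) onto the left side of the oriented arc $\tilde f(L)$; hence the boundary arc of $\tilde f(H_+)$ is precisely the positively oriented arc from $\partial \tilde f(a)$ to $\partial \tilde f(c)$. Finally, $\tilde f(M) \subset \tilde f(H_+)$ forces its forward ideal endpoint $\partial \tilde f(b)$ to lie on that arc, which is exactly the assertion that $(\partial \tilde f(a), \partial \tilde f(b), \partial \tilde f(c))$ is positively oriented.

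I expect the main obstacle to be the topological bookkeeping of the third paragraph rather than any new geometric input: namely, verifying that $\tilde f(L)$ really limits to exactly the two points $\partial \tilde f(a), \partial \tilde f(c)$, with no wild accumulation elsewhere on $S^1$, so that the separation argument applies, and---more delicately---tracking the orientation carefully enough to conclude that the image triple is positively (rather than negatively) oriented. The first point is handled by combining the endpoint identification of Lemma \ref{lem:extension_alternate} with the bounded-distance estimate established in its proof; the second is exactly where orientation preservation of the lift $\tilde f$ is essential, converting what would otherwise be ``order preserving up to reversal'' into genuine monotonicity.
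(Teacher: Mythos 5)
Your argument is correct, and it rests on the same two essential inputs as the paper's proof: Lemma \ref{lem:extension_alternate} to identify the ideal endpoints of $\tilde f$-images of geodesics, and orientation preservation of the lift $\tilde f$ to pin down the sign. The implementation differs, though. The paper takes the ideal geodesic triangle $T$ with vertices $a,b,c$, observes that $\tilde f(T)$ is a topological ideal triangle with vertices $\partial\tilde f(a), \partial\tilde f(b), \partial\tilde f(c)$, and concludes directly from the preservation of the induced boundary orientation that the image vertices form a positively oriented triple. You instead separate the disk by the single geodesic $L$ from $a$ to $c$, invoke Jordan--Schoenflies on the arc $\tilde f(L)$, and locate $\partial\tilde f(b)$ via an auxiliary geodesic $M$ with forward endpoint $b$. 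Your route is longer but more explicit about the point-set topology that the paper's one-line orientation argument leaves implicit (properness of $\tilde f$, the fact that $\tilde f(L)$ accumulates only at its two ideal endpoints, distinctness of those endpoints via injectivity of $\partial\tilde f$ on sinks). One small economy you could note: the auxiliary sink $b'$ is not needed for Lemma \ref{lem:extension_alternate} itself, since that lemma only requires the \emph{forward} endpoint of $M$ to be a sink; you use $b'\in(\Gamma_X)_\infty$ only to guarantee $M\subset H_+$, and any ray in $H_+$ limiting to $b$ would do. Both proofs carry the same burden at the orientation step --- asserting that an orientation-preserving homeomorphism of the disk sends the left side of an oriented arc to the left side of the image arc --- so neither is more rigorous there; yours simply isolates that step more visibly.
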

        \begin{proof}
            Suppose $a, b, c \in (\Gamma_X)_\infty$ such that $(a, b, c)$ is a positively oriented triple.
            Let $L_1, L_2, L_3$ be oriented geodesic lines joining $a$ to $b$, $b$ to $c$ and $c$ to $a$ respectively, and let $T$ be the ideal geodesic triangle with vertices $a, b, c$.
            Then by Lemma \ref{lem:extension_alternate}, $\tilde f$ maps $T$ to an ideal triangle $\tilde f (T)$ (whose sides $\tilde f (L_1), \tilde f (L_2), \tilde f (L_3)$ are not necessarily geodesics) with vertices at $\partial \tilde f (a), \partial \tilde f (b), \partial \tilde f (c)$.
            Now $L_1, L_2, L_3$ form the boundary of the triangle $T$, with its induced border orientation.
            Since $f$ is orientation preserving and hence so is $\tilde f$, we conclude that the orientation of $\tilde f (L_1) \cup \tilde f (L_2) \cup \tilde f (L_3)$ matches the orientation induced as the boundary of the triangle $\tilde f (T)$.
            Therefore the vertices $(\partial \tilde f (a), \partial \tilde f (b), \partial \tilde f (c))$ of the image triangle $\tilde f (T)$ form a positively oriented triple.
            This concludes the proof of monotonicity of $\partial \tilde f$.
        \end{proof}
        
        Since $\partial \tilde f$ is monotonic, it can be extended uniquely and continuously to the closure $\overline{(\Gamma_X)_\infty}$, unless there are jump discontinuities.
        However the image $\partial \tilde f ((\Gamma_X)_\infty) = (\Gamma_Y)_\infty$ is dense in $S^1$ since $Y$ is a complete, Nielsen-convex hyperbolic surface.
        Therefore there cannot be any jump discontinuities, so $\partial \tilde f$ can be extended uniquely and continuously to the closure $\overline{(\Gamma_X)_\infty}$.
        Since $X$ is also a complete, Nielsen-convex hyperbolic surface, we have $\overline{(\Gamma_X)_\infty} = S^1$, so the extension is defined over all of $S^1$.
        Further $\partial (\tilde f^{-1})$ is clearly a continuous inverse to $\partial \tilde f$, so $\partial \tilde f$ is a homeomorphism of $S^1$.
        
        Now we prove the remaining assertions of the proposition.
        $\partial \tilde f$ is $\Gamma_X$ equivariant because $\tilde f$ is too.
        If $f_t$ is an isotopy which lifts to an isotopy $\tilde f_t$, then we have $f_{0*} = f_{1*}$, and hence for every hyperbolic $\gamma \in \Gamma_X$, $\partial \tilde f_0 (\gamma_\infty) = f_{0*} (\gamma)_\infty = f_{1*} (\gamma)_\infty = \partial \tilde f_1 (\gamma_\infty)$.
        Therefore the continuous functions $\partial \tilde f_0$ and $\partial \tilde f_1$ agree on the dense set $(\Gamma_X)_\infty$, and hence are equal.
        Next, we compute that for every hyperbolic $\gamma \in \Gamma_X$, $\partial (\tilde g \circ \tilde f) (\gamma_\infty) = ((g \circ f)_* (\gamma))_\infty = (g_* (f_* (\gamma)))_\infty = \partial \tilde g ((f_* (\gamma))_\infty) = \partial \tilde g \circ \partial \tilde f (\gamma_\infty)$.
        Again the continuous functions $\partial (\tilde g \circ \tilde f)$ and $(\partial \tilde g) \circ (\partial \tilde f)$ agree on the dense set $(\Gamma_X)_\infty$, and hence are equal.
        For the final assertion in the proposition, we note that $\sigma \in \mathup{PSL} (2, \RR)$ also extends to a homeomorphism at infinity, which recall that we are denoting by the same symbol $\sigma$.
        Therefore $\sigma \circ \tilde f$ extends to a homeomorphism at infinity, and $\partial (\sigma \circ \tilde f) = \sigma \circ (\partial \tilde f)$.
    \end{proof}
    
    \section{The Topology of the Marked Moduli Space}
    \label{sec:topology}
    
    In this section, we define the topology on the set $\mathcal T (S)$ (Definition \ref{def:marked_moduli_space_topology}).
    Fix a universal cover $p : \HH^2 \to S$ with deck group $\Gamma \subset \mathup{PSL} (2, \RR)$, which is a torsion-free Fuchsian group of the first kind, that is, the limit set of $\Gamma$ is $S^1$.
    We define the topology on $\mathcal T (S)$ via a bijection $\Phi_p$ onto the topological space $\mathcal T (p)$ defined below:
    \begin{definition}[Alternate description of the marked moduli space]
        \begin{equation} \label{eq:alt_marked_moduli_space}
        \begin{split}
            \mathcal T (p) &= \leftQ{\widetilde {\mathcal T} (p)}{\mathup{PSL} (2, \RR)} \text{ where }\\
            \widetilde {\mathcal T} (p) &= \left\{F \in \Homeo^+ (S^1) \middle \vert F \circ \Gamma \circ F^{-1} \subset \mathup{PSL} (2, \RR)\right\}
        \end{split}
        \end{equation}
    \end{definition}
    $\mathcal T (p)$ is naturally a topological space as follows.
    $\widetilde {\mathcal T} (p)$ inherits a topology as a subspace of $\Homeo^+ (S^1)$ with the compact-open topology.
    $\mathcal T (p)$ has the quotient topology, where $\mathup{PSL} (2, \RR)$ acts on $\widetilde {\mathcal T} (p)$ by multiplication on the left.
    We denote by $\pi_p$ the quotient map $\widetilde {\mathcal T} (p) \to \mathcal T (p)$.
    
    Now we define $\Phi_p$.
    Suppose $[X, f]$ is a marked hyperbolic structure.
    Choose a universal cover $p_X : \HH^2 \to X$.
    The marking map $f : S \to X$ lifts to a homeomorphism $\tilde f : \HH^2 \to \HH^2$, which, by Proposition \ref{prop:homeomorphism_at_infinity} (\ref{prop:homeomorphism_at_infinity_extends}), extends to a homeomorphism $F = \partial \tilde f : S^1 \to S^1$.
    \begin{definition}\label{def:Phi_p}
        We define $\Phi_p [X, f]$ to be the right coset $[F] = [\partial \tilde f] = \mathup{PSL} (2, \RR) \circ (\partial \tilde f)$ of $\mathup{PSL} (2, \RR)$ in $\Homeo^+ (S^1)$.
    \end{definition}
    
    \begin{prop}
        $\Phi_p$ is a well defined function, and maps $\mathcal T (S)$ into $\mathcal T (p)$.
    \end{prop}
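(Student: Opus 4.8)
The plan is to verify the two assertions separately: first that $\Phi_p[X,f]$ actually lands in $\mathcal T(p)$, i.e. that $F=\partial\tilde f$ represents a genuine element of $\widetilde{\mathcal T}(p)$; and second that the coset $[F]$ is independent of the three choices implicit in Definition \ref{def:Phi_p}, namely the representative $(X,f)$ of the class $[X,f]$, the universal cover $p_X$, and the lift $\tilde f$.

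For the first assertion I would invoke the $\Gamma$-equivariance from Proposition \ref{prop:homeomorphism_at_infinity} part (\ref{prop:homeomorphism_at_infinity_equivariant}). If $f_*\colon\Gamma\to\Gamma_X$ is the isomorphism of deck groups induced by $\tilde f$, then for every $\gamma\in\Gamma$ the equivariance relation $(\partial\tilde f)\circ\gamma=f_*(\gamma)\circ(\partial\tilde f)$ rearranges to $F\circ\gamma\circ F^{-1}=f_*(\gamma)\in\Gamma_X\subset\mathup{PSL}(2,\RR)$. Hence $F\circ\Gamma\circ F^{-1}=\Gamma_X\subset\mathup{PSL}(2,\RR)$, so $F\in\widetilde{\mathcal T}(p)$ and $[F]\in\mathcal T(p)$, as required.

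For independence of the lift and of the universal cover, the key observation is that every admissible change replaces $\tilde f$ by $\sigma\circ\tilde f$ for some $\sigma\in\mathup{PSL}(2,\RR)$, after which part (\ref{prop:homeomorphism_at_infinity_isometrycomp}) gives $\partial(\sigma\circ\tilde f)=\sigma\circ(\partial\tilde f)$, a change absorbed by the right coset. Concretely, the lifts of $f$ for a fixed $p_X$ form the set $\Gamma_X\circ\tilde f$, so any other lift is $\gamma_X\circ\tilde f$ with $\gamma_X\in\Gamma_X\subset\mathup{PSL}(2,\RR)$; and replacing $p_X$ by another universal cover $p_X\circ\sigma$ with $\sigma\in\mathup{PSL}(2,\RR)$ forces the new lift to agree with $\tilde f$ up to left multiplication by an element of $\mathup{PSL}(2,\RR)$. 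In every case $\mathup{PSL}(2,\RR)\circ\partial\tilde f'=\mathup{PSL}(2,\RR)\circ\partial\tilde f$.

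The main obstacle is independence of the representative $(X,f)$, since the equivalence in Definition \ref{def:marked_moduli_space} furnishes an isometry only up to homotopy. Given $(X_1,f_1)\sim(X_2,f_2)$ witnessed by an isometry $\varphi\colon X_1\to X_2$ homotopic to $f_2\circ f_1^{-1}$, I would lift $\varphi$ to an isometry $\tilde\varphi\in\mathup{PSL}(2,\RR)$ of $\HH^2$, whose boundary extension is $\tilde\varphi$ itself. Then $\varphi\circ f_1\simeq f_2$, and choosing compatible lifts so that the homotopy itself lifts, part (\ref{prop:homeomorphism_at_infinity_isotopy})—whose proof uses only that homotopic maps induce the same map on deck groups—gives $\partial\widetilde{\varphi\circ f_1}=\partial\tilde f_2$, while the composition rule (\ref{prop:homeomorphism_at_infinity_composition}) together with (\ref{prop:homeomorphism_at_infinity_isometrycomp}) gives $\partial\widetilde{\varphi\circ f_1}=\tilde\varphi\circ\partial\tilde f_1$. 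Combining, $\partial\tilde f_2=\tilde\varphi\circ\partial\tilde f_1$ with $\tilde\varphi\in\mathup{PSL}(2,\RR)$, so the two boundary homeomorphisms differ by left multiplication by an element of $\mathup{PSL}(2,\RR)$ and hence determine the same right coset. Since a different choice of lift of $f_2$ only alters this identity by a further element of $\mathup{PSL}(2,\RR)$, the conclusion $\Phi_p[X_1,f_1]=\Phi_p[X_2,f_2]$ is unaffected, completing the verification.
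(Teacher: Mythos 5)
Your proposal is correct and follows essentially the same route as the paper: $\Gamma$-equivariance (part (2) of Proposition \ref{prop:homeomorphism_at_infinity}) to land in $\widetilde{\mathcal T}(p)$, and the observation that each of the three choices perturbs $\tilde f$ only by left composition with an element of $\mathup{PSL}(2,\RR)$, which the right coset absorbs via part (\ref{prop:homeomorphism_at_infinity_isometrycomp}). Your explicit remark that part (\ref{prop:homeomorphism_at_infinity_isotopy}) applies to homotopies (not just isotopies) because its proof only uses $f_{0*}=f_{1*}$ is a point the paper leaves implicit, but the argument is the same.
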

    \begin{proof}
        \begin{figure} \label{fig:Phi_p_independent}
            \centering
            \begin{subfigure}[b]{0.4\textwidth} \label{fig:Phi_p_well_defined_cover}
                \centering
                $\begin{tikzcd}[cramped]
                    \HH^2 \arrow[r,"\tilde f"] \arrow[d,"p"'] & \HH^2 \arrow[d, "p_X"'] \arrow[r,"\sigma"] & \HH^2 \arrow[ld, "q_X"] \\
                    S \arrow[r,"f"] & X &
                \end{tikzcd}$
                \caption{$\Phi_p [X, f]$ is independent of the cover $p_X~:~\HH^2~\to~X$.}
            \end{subfigure}
            \hfill
            \begin{subfigure}[b]{0.4\textwidth} \label{fig:Phi_p_well_defined_representative}
                \centering
                $\begin{tikzcd}[cramped]
                    \HH^2 \arrow[r,"\tilde f"'] \arrow[d,"p"'] \arrow[rr,"\tilde g"',bend left=50] & \HH^2 \arrow[d, "p_X"'] \arrow[r,"\sigma"'] & \HH^2 \arrow[d, "p_Y"'] \\
                    S \arrow[r,"f"'] \arrow[rr,"g",bend right=50] & X \arrow[r,"\varphi"'] & Y
                \end{tikzcd}$
                \caption{$\Phi_p [X, f]$ is independent of the representative $(X, f)$.}
            \end{subfigure}
            \caption{$\Phi_p [X, f]$ is well defined}
        \end{figure}
        We need to show that $\Phi_p [X, f]$ is independent of the chosen lift $\tilde f$ of $f$, independent of the chosen universal cover $p_X : \HH^2 \to X$, and independent of the chosen representative $(X, f)$ of the marked hyperbolic structure $[X, f]$.
        Indeed, any other lift of $f$ is of the form $\sigma \circ \tilde f$, where $\sigma \in \Gamma_X$ is a deck transformation.
        This extends at infinity to $\sigma \circ (\partial \tilde f)$, by Proposition \ref{prop:homeomorphism_at_infinity} (\ref{prop:homeomorphism_at_infinity_isometrycomp}).
        Since $\sigma \in \mathup{PSL} (2, \RR)$, we have an equality of right cosets $\mathup{PSL} (2, \RR) \circ (\sigma \circ \partial \tilde f) = \mathup{PSL} (2, \RR) \circ (\partial \tilde f)$, that is, $[\sigma \circ (\partial \tilde f)] = [\partial \tilde f]$.
        
        Next, suppose that $q_X : \HH^2 \to X$ is another universal cover.
        Since universal covers are unique up to isometry, there is a $\sigma \in \mathup{PSL} (2, \RR)$ such that $p_X = q_X \circ \sigma$.
        See Figure 4(A)
        , which commutes. 
        Then $\sigma \circ \tilde f$ is a lift of $f$ with respect to the universal covers $p$ and $q_X$, which extends at infinity to $\sigma \circ (\partial \tilde f)$, by \ref{prop:homeomorphism_at_infinity} (\ref{prop:homeomorphism_at_infinity_isometrycomp}).
        Again we have $[\sigma \circ (\partial \tilde f)] = [\partial \tilde f]$.
        
        Next, suppose $(Y, g)$ is another representative of the same marked hyperbolic structure.
        Then there is an isometry $\varphi : X \to Y$ such that $\varphi \circ f$ is homotopic to $g$.
        Then $\varphi$ lifts to an isometry $\sigma \in \mathup{PSL} (2, \RR)$.
        For a lift $\tilde f$ of $f$, $\sigma \circ \tilde f$ is a lift of $\varphi \circ f$.
        The homotopy from $\varphi \circ f$ to $g$ lifts to a homotopy from $\sigma \circ f$ to a lift $\tilde g$ of $g$.
        See Figure 4(B)
        , in which the squares commute and the top and bottom triangles homotopy commute.
        By Proposition \ref{prop:homeomorphism_at_infinity} (\ref{prop:homeomorphism_at_infinity_isometrycomp}), we have $\partial \tilde g = \partial (\sigma \circ \tilde f) = \sigma \circ (\partial \tilde f)$.
        Therefore again we have $[\partial \tilde g] = [\sigma \circ (\partial \tilde f)] = [\partial \tilde f]$.
        Hence $\Phi_p[X, f]$ is well defined.
        
        Finally, recall from Proposition \ref{prop:homeomorphism_at_infinity} (\ref{prop:homeomorphism_at_infinity_equivariant}) that $\tilde f$ is $\Gamma$-equivariant, that is, for every $\gamma \in \Gamma$ we have $\tilde f \circ \gamma = f_* (\gamma) \circ \tilde f$.
        Thus $\tilde f \circ \Gamma = \Gamma_X \circ \tilde f$, or in other words, $\Gamma_X = \tilde f \circ \Gamma \circ \tilde f^{-1}$.
        Taking the induced maps at infinity, by Proposition \ref{prop:homeomorphism_at_infinity} (\ref{prop:homeomorphism_at_infinity_equivariant}), we have $\Gamma_X = (\partial \tilde f) \circ \Gamma \circ (\partial \tilde f)^{-1} \subset \mathup{PSL} (2, \RR)$.
        So $\Phi_p$ maps $\mathcal T (S)$ into $\mathcal T (p)$ indeed.
    \end{proof}
    
    \begin{prop}
        \label{prop:Phi_p_bijection}
        $\Phi_p : \mathcal T (S) \to \mathcal T (p)$ is a bijection.
    \end{prop}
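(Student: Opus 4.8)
The plan is to prove injectivity and surjectivity separately, using throughout the identity $\Gamma_X = (\partial\tilde f)\,\Gamma\,(\partial\tilde f)^{-1}$ already established in the preceding proof, together with parts (\ref{prop:homeomorphism_at_infinity_equivariant}) and (\ref{prop:homeomorphism_at_infinity_composition}) of Proposition \ref{prop:homeomorphism_at_infinity} and the conformal naturality of the Douady--Earle extension.

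For injectivity, suppose $\Phi_p[X_1,f_1]=\Phi_p[X_2,f_2]$, so that $\partial\tilde f_1 = \sigma\circ\partial\tilde f_2$ for some $\sigma\in\mathup{PSL}(2,\RR)$. Since $\Gamma_{X_i}=(\partial\tilde f_i)\,\Gamma\,(\partial\tilde f_i)^{-1}$, this forces $\Gamma_{X_1}=\sigma\,\Gamma_{X_2}\,\sigma^{-1}$, so $\sigma^{-1}$ descends to an isometry $\varphi\colon X_1\to X_2$. It then remains to show $\varphi$ is homotopic to $f_2\circ f_1^{-1}$. I would lift $f_2\circ f_1^{-1}$ to $\tilde f_2\circ\tilde f_1^{-1}\colon\HH^2\to\HH^2$ and compute, using Proposition \ref{prop:homeomorphism_at_infinity}, that this lift is equivariant with respect to the isomorphism $\gamma\mapsto\sigma^{-1}\gamma\sigma$ from $\Gamma_{X_1}$ onto $\Gamma_{X_2}$ (here one uses $f_{i*}(\gamma)=(\partial\tilde f_i)\,\gamma\,(\partial\tilde f_i)^{-1}$ together with $\partial\tilde f_1 = \sigma\circ\partial\tilde f_2$). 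This is exactly the isomorphism realised by the lift $\sigma^{-1}$ of $\varphi$. Because the two lifts are equivariant for the \emph{same} isomorphism, the constant-speed geodesic homotopy $H_t$ between them in $\HH^2$ is again equivariant, since isometries carry constant-speed geodesics to constant-speed geodesics; hence $H_t$ descends to a homotopy $X_1\to X_2$ from $f_2\circ f_1^{-1}$ to $\varphi$. This yields $(X_1,f_1)\sim(X_2,f_2)$, as desired.

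For surjectivity, let $F\in\widetilde{\mathcal T}(p)$ and put $\Gamma'=F\,\Gamma\,F^{-1}\subseteq\mathup{PSL}(2,\RR)$; I must realise $[F]$ as $\Phi_p$ of a marked structure, which amounts to showing $\Gamma'$ is a torsion-free Fuchsian group of the first kind and then producing a marking. Discreteness is the crux: if $\Gamma'$ were not discrete there would exist nontrivial $\gamma'_n\to\operatorname{id}$ in $\mathup{PSL}(2,\RR)$, hence in $\Homeo^+(S^1)$; conjugating by the fixed homeomorphism $F^{-1}$, which is a self-homeomorphism of the topological group $\Homeo^+(S^1)$, yields $\gamma_n=F^{-1}\gamma'_n F\to\operatorname{id}$ in $\Homeo^+(S^1)$ with $\gamma_n\in\Gamma\setminus\{\operatorname{id}\}$. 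But $\mathup{PSL}(2,\RR)$ is closed and embedded in $\Homeo^+(S^1)$, so $\gamma_n\to\operatorname{id}$ in $\mathup{PSL}(2,\RR)$, contradicting discreteness of $\Gamma$. For torsion-freeness I note that every nontrivial $\gamma\in\Gamma$ is hyperbolic or parabolic, hence fixes a point of $S^1$, so $F\gamma F^{-1}$ fixes a point of $S^1$ and cannot be elliptic; thus $\Gamma'$ has no finite-order elements. Finally, $F$ carries the dense set $(\Gamma)_\infty$ of sinks bijectively onto $(\Gamma')_\infty$ (attracting fixed points are carried to attracting fixed points under topological conjugation), so $(\Gamma')_\infty$ is dense, the limit set of $\Gamma'$ is all of $S^1$, and $\Gamma'$ is of the first kind.

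With $X:=\leftQ{\HH^2}{\Gamma'}$ now a complete, Nielsen-convex hyperbolic surface by Proposition \ref{prop:nielsen_convex_alternatives}, I build the marking from the Douady--Earle extension $\tilde f:=\mathup{DE}(F)$. Conformal naturality gives, for $\gamma\in\Gamma$ and $\gamma'=F\gamma F^{-1}\in\mathup{PSL}(2,\RR)$, the relation $\gamma'\circ\mathup{DE}(F)\circ\gamma^{-1}=\mathup{DE}(\gamma'\circ F\circ\gamma^{-1})=\mathup{DE}(F)$, since $\gamma'\circ F\circ\gamma^{-1}=F$; hence $\tilde f$ is $(\Gamma,\Gamma')$-equivariant and descends to a homeomorphism $f\colon S\to X$. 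As $\tilde f$ is a lift of $f$ with $\partial\tilde f=F$, we obtain $\Phi_p[X,f]=[\partial\tilde f]=[F]$. I expect the verification that $\Gamma'$ is discrete and of the first kind to be the main obstacle, the remaining steps being formal once the homeomorphism-at-infinity dictionary of Proposition \ref{prop:homeomorphism_at_infinity} and the naturality of $\mathup{DE}$ are in hand.
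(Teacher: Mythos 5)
Your proposal is correct and follows essentially the same route as the paper: injectivity via the isometry descended from $\sigma$ together with an equivariant constant-speed geodesic homotopy between the lifts, and surjectivity by conjugating $\Gamma$ by $F$ and using the Douady--Earle extension to produce the marking. The extra details you supply (the sequence argument for discreteness, torsion-freeness via fixed points on $S^1$ rather than the abstract isomorphism with $\Gamma$) are valid elaborations of the same argument.
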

    \begin{proof}
        To see that $\Phi_p$ is injective, suppose $\Phi_p [X, f] = \Phi_p [Y, g]$.
        Let $\tilde f, \tilde g$ denote the lifts of $f, g$ with respect to universal covers $p_X : \HH^2 \to X$ and $p_Y : \HH^2 \to Y$.
        Note that $(\partial \tilde f) \circ \Gamma \circ (\partial \tilde f)^{-1} = \Gamma_X$ and $(\partial \tilde g) \circ \Gamma \circ (\partial \tilde g)^{-1} = \Gamma_Y$.
        Also, $[\partial \tilde g] = [\partial \tilde f]$, so there exists a $\sigma \in \mathup{PSL} (2, \RR)$ such that $\partial \tilde g = \sigma \circ (\partial \tilde f)$.
        We have $\sigma \circ \Gamma_X = \sigma \circ (\partial \tilde f) \circ \Gamma \circ (\partial \tilde f)^{-1} = (\partial \tilde g) \circ \Gamma \circ (\partial \tilde f)^{-1} = \Gamma_Y \circ (\partial \tilde g) \circ (\partial \tilde f)^{-1} = \Gamma_Y \circ \sigma$.
        Thus $\sigma$ is equivariant, hence descends to an isometry $\varphi : X \to Y$.
        Further, the homotopy from $\sigma \circ \tilde f$ to $\tilde g$ that moves each point along a geodesic line at constant speed is also equivariant, hence descends to a homotopy from $\varphi \circ f$ to $g$.
        Therefore $[X, f] = [Y, g]$ in $\mathcal T (S)$, so $\Phi_p$ is injective.
        
        To see that $\Phi_p$ is surjective, suppose $[F] \in \mathcal T (p)$.
        Then define $\Gamma_X = F \circ \Gamma \circ F^{-1}$.
        Since $F \in \widetilde {\mathcal T} (p)$, we have $\Gamma_X \subset \mathup{PSL} (2, \RR)$.
        $\Gamma_X$ is discrete because conjugation by $F$ is a homeomorphism of $\Homeo^+ (S^1)$, and that $\Gamma$ is discrete.
        $\Gamma_X$ is also torsion-free because it is isomorphic to $\Gamma$, which is torsion-free.
        Further $F$ maps the limit set of $\Gamma$ to the limit set of $\Gamma_X$, which is therefore all of $S^1$.
        Hence $X = \leftQ{\HH^2}{\Gamma_X}$ is a complete, Nielsen-convex hyperbolic surface.
        The Douady-Earle extension of $F$ is $\Gamma$-equivariant, hence descends to a homeomorphism $f : S \to X$.
        Clearly $\Phi_p[X, f] = [F]$, so $\Phi_p$ is surjective.
        This completes the proof of bijectivity of $\Phi_p$.
    \end{proof}
    
    The above proposition allows us to define a topology on $\mathcal T (S)$.
    \begin{definition}[Topology of the Marked Moduli Space]
        \label{def:marked_moduli_space_topology}
        The topology on $\mathcal T (S)$ is defined by declaring the bijection $\Phi_p : \mathcal T (S) \to \mathcal T (p)$ to be a homeomorphism.
        In other words, a subset $U \subset \mathcal T (S)$ is open if and only if $\Phi_p (U) \subset \mathcal T (p)$ is open.
    \end{definition}
    
    \begin{prop}
        If $p' : \HH^2 \to S$ is another universal cover, then $\Phi_{p'} \circ \Phi_p^{-1} : \mathcal T (p) \to \mathcal T (p')$ is a homeomorphism.
        Thus the topology on $\mathcal T (S)$ does not depend on the choice of the universal cover $p$.
    \end{prop}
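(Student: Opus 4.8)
The plan is to realise $\Phi_{p'}\circ\Phi_p^{-1}$ as the map induced on quotients by an explicit homeomorphism of $\widetilde{\mathcal T}(p)$ onto $\widetilde{\mathcal T}(p')$, namely precomposition by a fixed isometry, and then appeal to the standard fact that an equivariant homeomorphism of $G$-spaces descends to a homeomorphism of orbit spaces. First I would record the relation between the two covers. Since universal covers of $S$ are unique up to isometry, there is a $\sigma_0\in\mathup{PSL}(2,\RR)$ with $p=p'\circ\sigma_0$, and reading off deck groups gives $\Gamma'=\sigma_0\,\Gamma\,\sigma_0^{-1}$, equivalently $\Gamma=\sigma_0^{-1}\,\Gamma'\,\sigma_0$.

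Next I would define $R\colon\Homeo^+(S^1)\to\Homeo^+(S^1)$ by $R(F)=F\circ\sigma_0^{-1}$. If $F\in\widetilde{\mathcal T}(p)$, then $R(F)\,\Gamma'\,R(F)^{-1}=F(\sigma_0^{-1}\Gamma'\sigma_0)F^{-1}=F\,\Gamma\,F^{-1}\subset\mathup{PSL}(2,\RR)$, so $R$ carries $\widetilde{\mathcal T}(p)$ into $\widetilde{\mathcal T}(p')$; the symmetric computation shows the inverse $F'\mapsto F'\circ\sigma_0$ carries $\widetilde{\mathcal T}(p')$ into $\widetilde{\mathcal T}(p)$. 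Since $S^1$ is compact Hausdorff, precomposition by the fixed homeomorphism $\sigma_0^{-1}$ is a homeomorphism of $\Homeo^+(S^1)$ in the compact-open topology, so $R$ restricts to a homeomorphism $\widetilde{\mathcal T}(p)\to\widetilde{\mathcal T}(p')$. Moreover $R$ commutes with the left $\mathup{PSL}(2,\RR)$-action, $R(\tau\circ F)=\tau\circ R(F)$, so $\pi_{p'}\circ R$ is $\mathup{PSL}(2,\RR)$-invariant and factors through $\pi_p$ to give a map $\bar R\colon\mathcal T(p)\to\mathcal T(p')$, $[F]\mapsto[F\circ\sigma_0^{-1}]$. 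Because $\pi_p$ and $\pi_{p'}$ are quotient maps and $R,R^{-1}$ are continuous and equivariant, $\bar R$ is a homeomorphism with inverse $[F']\mapsto[F'\circ\sigma_0]$.

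The substantive step is then to identify $\bar R$ with $\Phi_{p'}\circ\Phi_p^{-1}$. Given $[F]\in\mathcal T(p)$, I would write $\Phi_p^{-1}[F]=[X,f]$ as in the proof of Proposition \ref{prop:Phi_p_bijection}: here $X=\leftQ{\HH^2}{\Gamma_X}$ with $\Gamma_X=F\,\Gamma\,F^{-1}$, the quotient map $p_X\colon\HH^2\to X$ is a cover, and $f$ descends from the $\Gamma$-equivariant Douady--Earle extension $\mathup{DE}(F)$, so that $p_X\circ\mathup{DE}(F)=f\circ p$ and $\partial\,\mathup{DE}(F)=F$. To compute $\Phi_{p'}[X,f]$ I keep the cover $p_X$ of $X$ (legitimate, since $\Phi_{p'}$ is independent of the chosen cover of $X$) and lift $f$ through $p'$: using $p=p'\circ\sigma_0$ one checks $p_X\circ(\mathup{DE}(F)\circ\sigma_0^{-1})=f\circ p\circ\sigma_0^{-1}=f\circ p'$, so $\mathup{DE}(F)\circ\sigma_0^{-1}$ is a lift of $f$ with respect to $p'$ and $p_X$. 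Its extension at infinity is $F\circ\sigma_0^{-1}$: the maps $\mathup{DE}(F)$ and $\sigma_0^{-1}$ extend continuously to $S^1$ as $F$ and $\sigma_0^{-1}$, so their composition extends continuously to $F\circ\sigma_0^{-1}$, and by uniqueness of the continuous extension this equals $\partial(\mathup{DE}(F)\circ\sigma_0^{-1})$. Hence $\Phi_{p'}[X,f]=[F\circ\sigma_0^{-1}]=\bar R[F]$, so $\Phi_{p'}\circ\Phi_p^{-1}=\bar R$ is a homeomorphism, and the topology on $\mathcal T(S)$ does not depend on $p$.

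The only delicate point I anticipate is the bookkeeping in the last step: keeping straight which pair of covers each lift is taken with respect to, and confirming that the boundary value of $\mathup{DE}(F)\circ\sigma_0^{-1}$ is exactly $F\circ\sigma_0^{-1}$ and not merely some other representative of the coset $[F\circ\sigma_0^{-1}]$. Everything else is formal, being either the one-line conjugation identity that makes $R$ well defined or the standard descent of an equivariant homeomorphism to the orbit spaces.
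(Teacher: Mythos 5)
Your overall strategy is the same as the paper's (realise $\Phi_{p'}\circ\Phi_p^{-1}$ as right multiplication by a fixed element of $\Homeo^+(S^1)$, check it preserves the defining condition of $\widetilde{\mathcal T}(\cdot)$ and commutes with the left $\mathup{PSL}(2,\RR)$-action, and descend to the coset spaces), but your very first step contains a genuine error. You assert that since universal covers of $S$ are unique up to isometry, there is $\sigma_0\in\mathup{PSL}(2,\RR)$ with $p=p'\circ\sigma_0$. That is false in this setting: $S$ is only a topological surface, and $p,p'$ are topological covering maps whose deck groups happen to be Fuchsian groups of the first kind. They induce two possibly non-isometric hyperbolic metrics on $S$, so the lift of $\mathrm{id}_S$ relating them is a homeomorphism $h$ of $\HH^2$ with $p\circ h=p'$ that is in general \emph{not} an isometry. (Concretely: if $p$ and $p'$ always differed by an element of $\mathup{PSL}(2,\RR)$, then any two marked hyperbolic structures on a closed genus-two surface whose markings are the identity would represent the same point of Teichm\"uller space.) The paper flags exactly this point. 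Consequently your conjugation identity $\Gamma=\sigma_0^{-1}\Gamma'\sigma_0$ inside $\mathup{PSL}(2,\RR)$, and your use of $\sigma_0^{-1}$ as an element of $\Homeo^+(S^1)$, are both unavailable as stated.

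The repair is essentially the paper's proof: take $h$ to be a lift of $\mathrm{id}_S$ with respect to the two covers, and invoke Proposition \ref{prop:homeomorphism_at_infinity} (\ref{prop:homeomorphism_at_infinity_extends}) --- the key tool of the paper, applied to $\mathrm{id}_S$ viewed as a homeomorphism between the two complete Nielsen-convex hyperbolic surfaces that $p$ and $p'$ put on $S$ --- to obtain a boundary extension $\partial h\in\Homeo^+(S^1)$. Then $\partial h\circ\Gamma'\circ(\partial h)^{-1}=\Gamma$ as subgroups of $\Homeo^+(S^1)$ by part (\ref{prop:homeomorphism_at_infinity_equivariant}), right multiplication by $\partial h$ is a homeomorphism of $\Homeo^+(S^1)$ carrying $\widetilde{\mathcal T}(p)$ onto $\widetilde{\mathcal T}(p')$ and commuting with the left $\mathup{PSL}(2,\RR)$-action, and the identification with $\Phi_{p'}\circ\Phi_p^{-1}$ follows from part (\ref{prop:homeomorphism_at_infinity_composition}) applied to $\tilde f\circ h$ (the paper does this directly with a lift $\tilde f$ of $f$, without routing through the Douady--Earle extension as you do). With that substitution your descent argument and your final bookkeeping go through; without it, the proof only covers the special case where the two covers happen to induce isometric hyperbolic structures on $S$ via a lift of the identity.
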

    \begin{proof}
        \begin{figure} \label{fig:topology_well_defined}
            \centering
            \begin{subfigure}[b]{0.4\textwidth} \label{fig:topology_lift}
                \centering
                $\begin{tikzcd}[cramped]
                    \HH^2 \arrow[r,"\tilde f"'] \arrow[d,"p"'] & \HH^2 \arrow[d, "p_X"'] & \HH^2 \arrow[l,"\tilde f'"] \arrow[d, "p'"'] \arrow[ll,"h",bend right=50] \\
                    S \arrow[r,"f"'] & X & S \arrow[l,"f"] \arrow[ll,"\operatorname{id}_S"',bend left=50]
                \end{tikzcd}$
                \caption{The identity lifts to the map $h$ with respect to covers $p'$ and $p$.}
            \end{subfigure}
            \hfill
            \begin{subfigure}[b]{0.4\textwidth} \label{fig:topology_homeo}
                \centering
                $\begin{tikzcd}[cramped]
                    \Homeo^+ (S^1) \arrow[r,"\circ \partial h","\cong"'] & \Homeo^+ (S^1) \\
                    \widetilde{\mathcal T} (p) \arrow[u,hook] \arrow[r,"\circ \partial h","\cong"'] \arrow[d,two heads,"\pi_p"'] & \widetilde{\mathcal T} (p') \arrow[u,hook] \arrow[d,two heads,"\pi_{p'}"'] \\
                    \mathcal T (p) \arrow[r,"\Phi_{p'} \circ \Phi_p^{-1}"'] & \mathcal T (p')
                \end{tikzcd}$
                \caption{$\Phi_{p'} \circ \Phi_p^{-1}$ is multiplication by $\partial h$ on the right.}
            \end{subfigure}
            \caption{$\Phi_{p'} \circ \Phi_p^{-1}$ is a homeomorphism.}
        \end{figure}
        Suppose $p, p' : \HH^2 \to S$ are two universal covers both of whose deck groups are torsion-free Fuchsian groups of the first kind.
        Then the identity map of $S$ lifts, with respect to the covers $p'$ and $p$, to a a map $h : \HH^2 \to \HH^2$ such that $p \circ h = p'$.
        By Proposition \ref{prop:homeomorphism_at_infinity} (\ref{prop:homeomorphism_at_infinity_extends}), $h$ extends to a homeomorphism $\partial h$ at infinity.
        Note that $h$ is not necessarily an isometry, because the two covers $p$ and $p'$ can possibly induce different hyperbolic metrics on $S$.
        
        Let $[X, f] \in \mathcal T (S)$ be a marked hyperbolic structure and $p_X : \HH^2 \to X$ be a universal cover.
        The marking map $f$ lifts to $\tilde f$ with respect to the covers $p$ and $p_X$, so that $\Phi_p [X, f] = [\partial \tilde f]$.
        That is, the right $\mathup{PSL} (2, \RR)$-coset $\Phi_p [X, f]$ is represented by the homeomorphism $\partial \tilde f$.
        Then the map $\tilde f'$ defined by $\tilde f' = \tilde f \circ h$ is a lift of $f$ with respect to the covers $p'$ and $p_X$.
        See Figure 5(A), which commutes. 
        Thus $\Phi_{p'} [X, f]$ is represented by $\partial (\tilde f \circ h) = (\partial \tilde f) \circ (\partial h)$, where the equality is due to Proposition \ref{prop:homeomorphism_at_infinity} (\ref{prop:homeomorphism_at_infinity_composition}).
        Hence $\Phi_{p'} \circ \Phi_p^{-1} : \mathcal T (p) \to \mathcal T (p')$ is induced simply by multiplication on the right by $\partial h$.
        Multiplication on the right by the fixed map $\partial h$ is a homeomorphism of the topological group $\Homeo^+ (S^1)$, and it restricts to a homeomorphism from $\widetilde{\mathcal T} (p)$ to $\widetilde{\mathcal T} (p')$, which descends to a homeomorphism $\Phi_{p'} \circ \Phi_p^{-1}$ of the spaces of right cosets $\mathcal T (p)$ and $\mathcal T (p')$.
        See Figure 5(B). 
        
        Now we can show that the topology of $\mathcal T (S)$ does not depend on the universal cover $p$.
        Indeed, for another universal cover $p'$ and any subset $U \subset \mathcal T (S)$, the condition $\Phi_p (U)$ is open in $\mathcal T (p)$ is equivalent, since $\Phi_{p'} \circ\Phi_p^{-1}$ is a homeomorphism, to the condition that $\Phi_{p'} (U) = \Phi_{p'} \circ\Phi_p^{-1} (\Phi_p (U))$ is open in $\mathcal T (p')$.
    \end{proof}
    
    \section{Continuity of the action}
    \label{sec:continuity}
    
    Now that we have a well defined topology on $\mathcal T (S)$, we can ask if the action of $\MCG (S)$ on $\mathcal T (S)$ is continuous.
    In this section, we prove that it is indeed continuous.
    It will follow that for each mapping class $[\psi] \in \MCG (S)$, its action on $\mathcal T (S)$, that is, the function $A_{[\psi]} : \mathcal T (S) \to \mathcal T (S)$, is a homeomorphism.
    We can also ask if the action representation $A : \MCG (S) \to \Homeo (\mathcal T (S))$ is continuous.
    However, we have not defined any topology on the codomain.
    For infinite type surfaces $S$, we do not expect $\mathcal T (S)$ to be locally compact in general.
    So the compact-open topology may not be very useful.
    Instead we prove that the action function $A : \MCG (S) \times \mathcal T (S) \to \mathcal T (S)$ is continuous.
    Since the topology on $\mathcal T (S)$ is defined via homeomorphisms at infinity, we must compute the action function in terms of homeomorphisms at infinity in order to prove continuity.
    
    First we fix a universal cover $p : \HH^2 \to S$ with deck group $\Gamma \subset \mathup{PSL} (2, \RR)$, a torsion-free Fuchsian group of the first kind as before.
    Note that $\Gamma$ is also naturally a subgroup of $\Homeo^+ (S^1)$.
    Let $\mathup{N} (\Gamma)$ be the normaliser of $\Gamma$ in $\Homeo^+ (S^1)$ and let $\pi_N$ be the quotient map $\mathup N (\Gamma) \to \mathup N (\Gamma)/\Gamma$.
    We now define a function $\Psi_p : \MCG (S) \to \mathup {N} (\Gamma)/\Gamma$ which will enable us to compute the action function in terms of homeomorphisms at infinity.
    Consider a mapping class $[\psi] \in \MCG (S)$ where $\psi \in \Homeo^+ (S)$ is a representative homeomorphism.
    $\psi$ lifts to a homeomorphism $\tilde \psi : \HH^2 \to \HH^2$ which, by Proposition \ref{prop:homeomorphism_at_infinity} (\ref{prop:homeomorphism_at_infinity_extends}), extends to a homeomorphism at infinity $G = \partial \tilde \psi : S^1 \to S^1$.
    \begin{definition}
        We define $\Psi_p [\psi]$ to be the right coset $[G] = [\partial \tilde \psi] = \Gamma \circ (\partial \tilde \psi)$  of $\Gamma$ in $\Homeo^+ (S^1)$.
    \end{definition}
    \begin{prop}
        $\Psi_p$ is a well defined function, and a group isomorphism from $\MCG (S)$ to $\mathup {N} (\Gamma)/\Gamma$.
    \end{prop}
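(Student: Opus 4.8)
The plan is to verify, in order, that $\Psi_p$ takes values in $\mathup N(\Gamma)/\Gamma$, is independent of all choices, is a homomorphism, and is a bijection, with Proposition \ref{prop:homeomorphism_at_infinity} doing essentially all the work. First I would check that $G=\partial\tilde\psi$ really normalises $\Gamma$. Since here the source and target surfaces both equal $S=\HH^2/\Gamma$, the induced isomorphism $\psi_*$ is an automorphism of $\Gamma$, and part (\ref{prop:homeomorphism_at_infinity_equivariant}) of Proposition \ref{prop:homeomorphism_at_infinity} gives $G\circ\gamma\circ G^{-1}=\psi_*(\gamma)\in\Gamma$ for every $\gamma\in\Gamma$. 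Hence $G\in\mathup N(\Gamma)$ and $[G]=\Gamma\circ G$ is a genuine element of $\mathup N(\Gamma)/\Gamma$.

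Well-definedness splits into independence of the lift $\tilde\psi$ and independence of the representative $\psi$. The lifts of a fixed $\psi$ form $\Gamma\circ\tilde\psi$, so any other lift is $\sigma\circ\tilde\psi$ with $\sigma\in\Gamma$; by part (\ref{prop:homeomorphism_at_infinity_isometrycomp}) its extension is $\sigma\circ(\partial\tilde\psi)$, which lies in the same right coset $\Gamma\circ(\partial\tilde\psi)$. For independence of the representative, if $\psi_0$ and $\psi_1$ are isotopic I would lift the isotopy (starting from a chosen lift $\tilde\psi_0$) to an isotopy $\tilde\psi_t$ of $\HH^2$ and invoke part (\ref{prop:homeomorphism_at_infinity_isotopy}) to conclude $\partial\tilde\psi_0=\partial\tilde\psi_1$, so the two cosets coincide. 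To see $\Psi_p$ is a homomorphism, choose lifts so that $\tilde\psi_1\circ\tilde\psi_2$ lifts $\psi_1\circ\psi_2$; part (\ref{prop:homeomorphism_at_infinity_composition}) gives $\partial(\tilde\psi_1\circ\tilde\psi_2)=(\partial\tilde\psi_1)\circ(\partial\tilde\psi_2)$, and since $\Gamma$ is normal in $\mathup N(\Gamma)$ the product of right cosets is $\Gamma\circ(\partial\tilde\psi_1)\circ(\partial\tilde\psi_2)$, matching $\Psi_p[\psi_1]\cdot\Psi_p[\psi_2]$.

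For surjectivity I would exploit the Douady--Earle extension. Given $G\in\mathup N(\Gamma)$, the relation $G\circ\gamma=(G\gamma G^{-1})\circ G$ together with conformal naturality of $\mathup{DE}$ yields $\mathup{DE}(G)\circ\gamma=(G\gamma G^{-1})\circ\mathup{DE}(G)$, so $\mathup{DE}(G)$ is $\Gamma$-equivariant (via the automorphism $\gamma\mapsto G\gamma G^{-1}$) and descends to an orientation-preserving homeomorphism $\psi:S\to S$ admitting $\mathup{DE}(G)$ as a lift. Because $\mathup{DE}(G)$ restricts to $G$ on $S^1$, we get $\partial\tilde\psi=G$ and hence $\Psi_p[\psi]=[G]$. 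For injectivity, suppose $\Psi_p[\psi]=[\operatorname{id}]$, i.e.\ $\partial\tilde\psi\in\Gamma$; replacing the lift $\tilde\psi$ by $\gamma^{-1}\circ\tilde\psi$ for the appropriate $\gamma\in\Gamma$, I may assume $\partial\tilde\psi=\operatorname{id}_{S^1}$. Then equivariance forces $\psi_*(\gamma)=\gamma$ for all $\gamma$, so $\psi$ induces the identity on $\pi_1(S)$; as $S$ has negative Euler characteristic it is aspherical, whence $\psi$ is homotopic to $\operatorname{id}_S$.

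The one genuinely nontrivial point I expect to be the main obstacle is the very last implication: passing from ``$\psi$ homotopic to the identity'' to ``$[\psi]$ trivial in $\MCG(S)=\rightQ{\Homeo^+(S)}{\Homeo_0(S)}$''. Triviality in $\MCG(S)$ requires isotopy to the identity, and for infinite-type $S$ the statement that homotopic homeomorphisms of a surface are isotopic is not automatic; it must be quoted from the literature (Epstein's theorem on curves and isotopies of surfaces), and I would flag this as the step needing an external citation rather than a computation with Proposition \ref{prop:homeomorphism_at_infinity}.
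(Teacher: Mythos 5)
Your proof is correct and follows essentially the same route as the paper: the same parts of Proposition \ref{prop:homeomorphism_at_infinity} handle well-definedness, normalisation and the homomorphism property, and the Douady--Earle extension handles surjectivity --- the paper merely says bijectivity follows by ``a similar argument'' to Proposition \ref{prop:Phi_p_bijection}, which is exactly what you spell out. Your closing caveat is well taken: since $\MCG(S)$ is defined via isotopy while the injectivity argument naturally produces only a homotopy of $\psi$ to the identity, one does need the homotopy-implies-isotopy theorem for surfaces (Epstein), a point the paper leaves implicit.
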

    \begin{proof}
        We need to show that $\Psi_p [\psi]$ is independent of the chosen representative homeomorphism $\psi$ and independent of the chosen lift $\tilde \psi$ of the mapping class $[\psi]$.
        Firstly, if $\psi'$ is another representative of the mapping class $[\psi]$, then $\psi$ is homotopic to $\psi'$.
        This homotopy lifts to a homotopy between $\tilde \psi$ and a lift $\tilde \psi'$ of $\psi'$.
        By Proposition \ref{prop:homeomorphism_at_infinity} (\ref{prop:homeomorphism_at_infinity_isotopy}), we have $\partial \tilde \psi' = \partial \tilde \psi$, so $[\partial \tilde \psi] = [\partial \tilde \psi]$.
        Secondly, the set of all lifts of $\psi$ is $\Gamma \circ \tilde \psi$.
        Thus any other lift of $\psi$ is of the form $\sigma \circ \tilde \psi$ for some $\sigma \in \Gamma$, which extends at infinity to $\sigma \circ (\partial \tilde \psi)$ by Proposition \ref{prop:homeomorphism_at_infinity} (\ref{prop:homeomorphism_at_infinity_isometrycomp}).
        Hence we have $[\sigma \circ (\partial \tilde \psi)] = [\partial \tilde \psi]$, and so $\Psi_p$ is a well defined function.
        
        Further, it follows from $\Gamma \circ \tilde \psi = \tilde \psi \circ \Gamma$ (as homeomorphisms of $\HH^2$) and Proposition \ref{prop:homeomorphism_at_infinity} (\ref{prop:homeomorphism_at_infinity_equivariant}) that $\Gamma \circ (\partial \tilde \psi) = (\partial \tilde \psi) \circ \Gamma$ (as homeomorphisms of $S^1$), so in fact $(\partial \tilde \psi) \circ \Gamma \circ (\partial \tilde \psi)^{-1} = \Gamma$, and hence $\partial \tilde \psi$ lies in  the normaliser of $\Gamma$.
        So $\Psi_p$ maps $\MCG (S)$ into $\mathup N (\Gamma)/\Gamma$ indeed.
        A similar argument as with $\Phi_p$ in Proposition \ref{prop:Phi_p_bijection} shows that $\Psi_p$ is a bijection.
        If $\psi_1, \psi_2 \in \Homeo^+ (S)$ with lifts $\tilde \psi_1, \tilde \psi_2$ then $\tilde \psi_1 \circ \tilde \psi_2$ is a lift of $\psi_1 \circ \psi_2$.
        Thus $\Psi_p ([\psi_1] \cdot [\psi_2]) = \Psi_p [\psi_1 \circ \psi_2] = [\partial (\tilde \psi_1 \circ \tilde \psi_2)]$ which equals $[(\partial \tilde \psi_1) \circ (\partial \tilde \psi_2)]$ by Proposition \ref{prop:homeomorphism_at_infinity} (\ref{prop:homeomorphism_at_infinity_composition}), and hence equals $[\partial \tilde \psi_1] \circ [\partial \tilde \psi_2] = \Psi_p [\psi_1] \cdot \Psi_p [\psi_2]$.
        Hence $\Psi_p$ is a group homomorphism and, since it is a bijection as well, is a group isomorphism.
    \end{proof}
    
    We are ready to compute the action function in terms of homeomorphisms at infinity.
    The general idea is that $\Phi_p (A ([\psi], [X, f])) = \Phi_p [X, f] \circ (\Psi_p [\psi])^{-1}$.
    However, $\Phi_p [X, f]$ and $\Psi_p [\psi]$ are right cosets of $\mathup{PSL} (2, \RR)$ and $\Gamma$ respectively in $\Homeo^+ (S^1)$ and $\mathup{N} (\Gamma)$ respectively.
    In order to avoid dealing with multiplication of cosets, we state the computation in the following manner:
    
    \begin{prop} \label{prop:action_in_coordinates}
        Suppose $[X, f] \in \mathcal T (S)$ and $[\psi] \in \MCG(S)$.
        If $\Phi_p [X, f] = [F]$ and $\Psi_p [\psi] = [G]$, then we have
        \begin{equation} \label{eq:action_in_coordinates}
            \Phi_p (A ([\psi], [X, f])) = [F \circ G^{-1}]
        \end{equation}
    \end{prop}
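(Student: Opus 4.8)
The plan is to compute $\Phi_p(A([\psi],[X,f])) = \Phi_p[X, f\circ\psi^{-1}]$ directly from Definition \ref{def:Phi_p}, by exhibiting an explicit lift of the new marking $f\circ\psi^{-1}$ whose extension at infinity is visibly $F\circ G^{-1}$. First I would arrange that the chosen representatives $F$ and $G$ are themselves boundary extensions of genuine lifts. Fixing a universal cover $p_X:\HH^2\to X$ and a lift $\tilde f$ of $f$, the coset $\Phi_p[X,f]$ is represented by $\partial\tilde f$, so $F=\sigma_0\circ\partial\tilde f$ for some $\sigma_0\in\mathup{PSL}(2,\RR)$; replacing $\tilde f$ by $\sigma_0\circ\tilde f$ (a lift of $f$ with respect to the cover $p_X\circ\sigma_0^{-1}$) and invoking Proposition \ref{prop:homeomorphism_at_infinity} (\ref{prop:homeomorphism_at_infinity_isometrycomp}), I may assume $\partial\tilde f=F$. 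Likewise, since the lifts of $\psi$ are exactly $\Gamma\circ\tilde\psi$ and the representatives of $\Psi_p[\psi]$ differ by left multiplication by $\Gamma$, I may choose the lift $\tilde\psi$ of $\psi$ (with respect to $p$ on both source and target) so that $\partial\tilde\psi=G$.

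The core computation is then short. Since $p\circ\tilde\psi=\psi\circ p$, the inverse satisfies $p\circ\tilde\psi^{-1}=\psi^{-1}\circ p$, so $\tilde\psi^{-1}$ is a lift of $\psi^{-1}$; consequently $\tilde f\circ\tilde\psi^{-1}$ is a lift of $f\circ\psi^{-1}$ with respect to the covers $p$ and $p_X$, because $p_X\circ(\tilde f\circ\tilde\psi^{-1})=f\circ p\circ\tilde\psi^{-1}=(f\circ\psi^{-1})\circ p$. Applying Proposition \ref{prop:homeomorphism_at_infinity} (\ref{prop:homeomorphism_at_infinity_composition}) to the identity $\tilde\psi\circ\tilde\psi^{-1}=\operatorname{id}_{\HH^2}$ gives $(\partial\tilde\psi)\circ\partial(\tilde\psi^{-1})=\operatorname{id}_{S^1}$, hence $\partial(\tilde\psi^{-1})=(\partial\tilde\psi)^{-1}=G^{-1}$. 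Applying the same composition property once more yields $\partial(\tilde f\circ\tilde\psi^{-1})=(\partial\tilde f)\circ\partial(\tilde\psi^{-1})=F\circ G^{-1}$, and therefore $\Phi_p[X,f\circ\psi^{-1}]=[\partial(\tilde f\circ\tilde\psi^{-1})]=[F\circ G^{-1}]$, which is the claimed formula \eqref{eq:action_in_coordinates}.

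The one point I expect to require care, and the main thing I would verify carefully, is that the right-hand side $[F\circ G^{-1}]$ is genuinely well defined as a right coset of $\mathup{PSL}(2,\RR)$, independent of the representatives $F$ and $G$ chosen for the two cosets; this is what makes the statement meaningful beyond the particular representatives produced above. Changing $F$ to $\sigma F$ with $\sigma\in\mathup{PSL}(2,\RR)$ leaves $[F\circ G^{-1}]$ unchanged at once. Changing $G$ to $\gamma G$ with $\gamma\in\Gamma$ replaces $F\circ G^{-1}$ by $F\circ G^{-1}\circ\gamma^{-1}$, and here I would use that $G$ normalises $\Gamma$ to write $\gamma':=G^{-1}\gamma^{-1}G\in\Gamma$, together with the defining property that $F\in\widetilde{\mathcal T}(p)$ conjugates $\Gamma$ into $\mathup{PSL}(2,\RR)$, so that $(F\circ G^{-1}\circ\gamma^{-1})\circ(F\circ G^{-1})^{-1}=F\circ\gamma'\circ F^{-1}\in\mathup{PSL}(2,\RR)$; this places the two products in the same right $\mathup{PSL}(2,\RR)$-coset. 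With well-definedness in hand, the explicit representative computed above establishes the proposition.
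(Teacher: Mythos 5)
Your proposal is correct and follows essentially the same route as the paper: both identify $\tilde f\circ\tilde\psi^{-1}$ as a lift of $f\circ\psi^{-1}$, apply Proposition \ref{prop:homeomorphism_at_infinity} (\ref{prop:homeomorphism_at_infinity_composition}) to get $\partial(\tilde f\circ\tilde\psi^{-1})=(\partial\tilde f)\circ(\partial\tilde\psi)^{-1}$, and then dispose of the choice of representatives using the $\Gamma$-equivariance of $\partial\tilde f$ (equivalently, $F\circ\Gamma\circ F^{-1}\subset\mathup{PSL}(2,\RR)$) together with the fact that $G$ normalises $\Gamma$. The only difference is organisational --- you normalise $F$ and $G$ to be boundary extensions of actual lifts first and check well-definedness of $[F\circ G^{-1}]$ separately, whereas the paper absorbs both into a single computation with arbitrary representatives $F=\sigma\circ\partial\tilde f$ and $G=(\partial\tilde\psi)\circ\gamma^{-1}$.
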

    \begin{proof}
        Let $\tilde f$ be a lift of the marking map $f$, and let $\tilde \psi$ be a lift of the homeomorphism $\psi$ of $S$.
        Then $\Phi_p [X, f] = [\partial \tilde f]$ and $\Psi_p [\psi] = [\partial \tilde \psi]$.
        We have $A ([\psi], [X, f]) = [X, f \circ \psi^{-1}]$.
        Then the modified marking $f \circ \psi^{-1}$ lifts to the map $\tilde f \circ \tilde \psi^{-1}$, and by Proposition \ref{prop:homeomorphism_at_infinity} (\ref{prop:homeomorphism_at_infinity_composition}) this extends at infinity to $\partial (\tilde f \circ \tilde \psi^{-1}) = (\partial \tilde f) \circ (\partial \tilde \psi)^{-1}$.
        Therefore $\Phi_p (A ([\psi], [X, f])) = \Phi_p [X, f \circ \psi^{-1}] = [(\partial \tilde f) \circ (\partial \tilde \psi)^{-1}]$.
        If $F$ is any other representative of the right coset $[\partial \tilde f]$ of $\mathup{PSL} (2, \RR)$ in $\Homeo^+ (S^1)$, then $F = \sigma \circ (\partial \tilde f)$ for some $\sigma \in \mathup{PSL} (2, \RR)$.
        Similarly if $G$ is any other representative of the (left or right) coset $[\partial \tilde \psi]$ of $\Gamma$ in $\mathup{N} (\Gamma)$, then $G = (\partial \tilde \psi) \circ \gamma^{-1}$ for some $\gamma \in \Gamma$.
        Then $F \circ G^{-1} = (\sigma \circ (\partial \tilde f)) \circ ((\partial \tilde \psi) \circ \gamma^{-1})^{-1} = \sigma \circ (\partial \tilde f) \circ \gamma \circ (\partial \tilde \psi)^{-1}$,
        which equals, by Proposition \ref{prop:homeomorphism_at_infinity} (\ref{prop:homeomorphism_at_infinity_equivariant}),
        $\sigma \circ f_* (\gamma) \circ (\partial \tilde f) \circ (\partial \tilde \psi)^{-1}$.
        Since $\sigma, f_* (\gamma) \in \mathup{PSL} (2, \RR)$, this represents the same right coset of $\mathup{PSL} (2, \RR)$ in $\Homeo^+ (S^1)$ as that of $(\partial \tilde f) \circ (\partial \tilde \psi)^{-1}$.
        In other words, $\Phi_p (A ([\psi], [X, f])) = [F \circ G^{-1}]$.
    \end{proof}
    
    Thus in terms of homeomorphisms at infinity, the action of the mapping class group is simply by multiplication on the right by the inverse.
    Since inversion and multiplication are continuous operations in the topological group $\Homeo^+ (S^1)$, the formula $(G, F) \mapsto F \circ G^{-1}$ defines a continuous function on $\mathup N (\Gamma) \times \widetilde {\mathcal T} (p) \subset \Homeo^+ (S^1) \times \Homeo^+ (S^1)$.
    This descends to the action function $A$, which is continuous as long as $[\partial \tilde \psi]$ depends continuously on $[\psi]$.
    That is, we need to show that $\Psi_p$ is continuous, which we now do in Lemma \ref{lem:mcg_normaliser}.
    
    \begin{lemma} \label{lem:mcg_normaliser}
        Fix a universal cover $p : \HH^2 \to S$ with deck group $\Gamma \subset \mathup{PSL} (2, \RR) \subset \Homeo^+ (S^1)$, which is a torsion-free Fuchsian group of the first kind as before.
        Then the function $\Psi_p : \MCG (S) \to \mathup{N} (\Gamma)/\Gamma$ is a homeomorphism.
    \end{lemma}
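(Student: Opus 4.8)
The map $\Psi_p$ is already a group isomorphism, so it remains to show that both $\Psi_p$ and $\Psi_p^{-1}$ are continuous. I would treat the two directions by entirely different means: the continuity of $\Psi_p^{-1}$ should fall out cleanly from the Douady--Earle extension, whereas the continuity of $\Psi_p$ is the real content and will force me to exploit the discreteness of $\Gamma$ together with the monotonicity of circle homeomorphisms. (As a sanity check one can note that $\Homeo^+(S)$ and $\Homeo^+(S^1)$ are Polish groups and $\Homeo_0(S), \Gamma$ are closed subgroups, so both $\MCG(S)$ and $\mathup{N}(\Gamma)/\Gamma$ are Polish; a continuous bijective homomorphism of Polish groups is automatically a homeomorphism, so establishing continuity in one direction would in principle suffice. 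I would nonetheless prefer the explicit argument below.)

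For $\Psi_p^{-1}$, I would start from $G \in \mathup{N}(\Gamma)$ and form its Douady--Earle extension $\mathup{DE}(G) \in \Homeo^+(\HH^2)$, which restricts to $G$ on $S^1$. Since $G$ normalises $\Gamma$ and the inner elements $\gamma$ and $G\gamma G^{-1}$ lie in $\mathup{PSL}(2,\RR)$, conformal naturality gives $\mathup{DE}(G)\circ\gamma = (G\gamma G^{-1})\circ\mathup{DE}(G)$ for every $\gamma \in \Gamma$; thus $\mathup{DE}(G)$ is $\Gamma$-equivariant and descends to a homeomorphism $\psi_G$ of $S = \HH^2/\Gamma$. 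Replacing $G$ by $\gamma_0 G$ multiplies $\mathup{DE}(G)$ on the left by the deck transformation $\gamma_0$, so $[\psi_G] \in \MCG(S)$ depends only on the coset $[G]$, and $\partial\,\mathup{DE}(G) = G$ shows $\Psi_p[\psi_G] = [G]$; hence $[G] \mapsto [\psi_G]$ is exactly $\Psi_p^{-1}$. Its continuity is then a composition of three continuous maps: $\mathup{DE}$ (continuous into the compact-open topology by the cited result of Douady--Earle), the descent $\Homeo^+(\HH^2) \to \Homeo^+(S)$ (continuous because $p$ is a local isometry and so does not increase distances, whence uniform convergence on a compact lift of $K \subset S$ forces uniform convergence on $K$), and the open quotient $\Homeo^+(S) \to \MCG(S)$. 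Since the composite factors through $\mathup{N}(\Gamma)/\Gamma$, it induces the desired continuous map.

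For the continuity of $\Psi_p$ I would argue sequentially, using metrisability of $\MCG(S)$ and openness of $\Homeo^+(S) \to \MCG(S)$ to lift any convergent sequence $[\psi_n] \to [\psi_0]$ to $\psi_n \to \psi_0$ in $\Homeo^+(S)$. Fixing a basepoint lift $\tilde x_0$ and a fundamental domain, I would normalise the lifts $\tilde\psi_n$ so that $\tilde\psi_n(\tilde x_0) \to \tilde\psi_0(\tilde x_0)$; continuity of lifting then yields $\tilde\psi_n \to \tilde\psi_0$ uniformly on compacta of $\HH^2$. The key observation is that for each fixed hyperbolic $\gamma$, the element $\tilde\psi_n \gamma \tilde\psi_n^{-1}$ lies in $\Gamma$ and converges, in the compact-open topology of $\HH^2$, to $\tilde\psi_0 \gamma \tilde\psi_0^{-1}$; since $\Gamma$ is discrete in this topology, these elements are \emph{eventually equal}. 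Hence, by the definition of $\partial\tilde\psi$ on sinks, $\partial\tilde\psi_n(\gamma_\infty) = (\psi_{n*}\gamma)_\infty = (\psi_{0*}\gamma)_\infty = \partial\tilde\psi_0(\gamma_\infty)$ for $n$ large, so the representatives $F_n := \partial\tilde\psi_n$ converge to $F := \partial\tilde\psi_0$ pointwise on the dense set $(\Gamma)_\infty$ of sinks. Because each $F_n$ and $F$ is an orientation-preserving homeomorphism of $S^1$, pointwise convergence on a dense set to the homeomorphism $F$ upgrades to uniform convergence by a standard monotone (Helly/P\'olya-type) squeezing argument; thus $F_n \to F$ in $\Homeo^+(S^1)$ and $\Psi_p[\psi_n] = [F_n] \to [F] = \Psi_p[\psi_0]$.

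The main obstacle is precisely this last direction. The assignment $\tilde\psi \mapsto \partial\tilde\psi$ is \emph{not} continuous on $\Homeo^+(\HH^2)$ in general, since maps agreeing on arbitrarily large compacta can differ wildly at infinity; so continuity of $\Psi_p$ cannot be read off from any naive continuity of boundary extension. The argument must instead route through the discreteness of $\Gamma$, which pins down $\psi_{n*}\gamma$ exactly rather than merely approximately, and through the monotonicity of circle homeomorphisms, which converts pointwise control on the dense sink set into uniform control. Some care is also needed to normalise the lifts $\tilde\psi_n$ so that one is genuinely comparing the same coset representatives, and to note that the eventual equality $\psi_{n*}\gamma = \psi_{0*}\gamma$ holds for each $\gamma$ separately but cannot be arranged uniformly in $\gamma$ — which is exactly why $\Psi_p$ is continuous without being locally constant.
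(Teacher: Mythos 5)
Your proposal is correct and follows essentially the same route as the paper: continuity of $\Psi_p^{-1}$ via the conformal naturality and continuity of the Douady--Earle extension, and continuity of $\Psi_p$ by showing that homeomorphisms close to a given one induce the \emph{same} conjugation on each fixed deck element (so the boundary maps agree on a dense set of sinks of hyperbolic elements) and then upgrading to uniform closeness via monotonicity. The only packaging difference is that the paper works with explicit neighbourhoods of the identity in $\MCG (S)$, detecting $\psi_* (\gamma_i) = \gamma_i$ by homotoping curves through distances below the injectivity radius, whereas you argue sequentially at a general point and detect $\psi_{n*} \gamma = \psi_{0*} \gamma$ via discreteness of $\Gamma$ in the compact-open topology --- two realisations of the same mechanism.
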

    \begin{proof}
        Fix a basepoint $\tilde s \in \HH^2$ and let $s = p (\tilde s)$.
        Let $d$ be the visual metric on $\partial \HH^2 = S^1$ induced by the basepoint $\tilde s$.
        Since we have fixed a universal cover $p : \HH^2 \to S$, we have access to the complete hyperbolic metric $d_S$ on $S$ and its injectivity radius function $\mathup{inj}_S$, which is a continuous function on $S$.
        
        For the continuity of $\Psi_p$, we only need to show that $\Psi_p$ is continuous at the identity $[\mathup{id}_S] \in \MCG (S)$, since $\Psi_p$ is a group isomorphism.
        Note that $\Psi_p [\mathup{id}_S] = [\partial \mathup{id}_{\HH^2}] = [\mathup{id}_{S^1}]$.
        Let $U$ be an open neighbourhood of $[\mathup{id}_{S^1}]$ in $\mathup{N} (\Gamma)/\Gamma$.
        We have to show that $\Psi_p^{-1} (U)$ is an open neighbourhood of $[\mathup{id}_S]$ in $\MCG (S)$.
        Since $\pi_N$ is continuous, the preimage $\pi_N^{-1} (U)$ is open in $\mathup N (\Gamma)$, and hence is the intersection with $\mathup N (\Gamma)$ of an open neighbourhood $U_1$ of $\mathup{id}_{S^1}$ in $\Homeo^+ (S^1)$.
        Therefore there is an $\varepsilon$ such that the ball in $\Homeo^+ (S^1)$ (in the metric of uniform convergence)
        \[
        B_d (\mathup{id}_{S^1}, \varepsilon) = \{\theta \in \Homeo^+ (S^1) \mid d (\theta (q), q) < \varepsilon \text{ for all } q \in S^1\}
        \]
        is contained in $U_1$.
        
        We will construct an open neighbourhood $V_1$ of $\mathup{id}_S$ in $\Homeo^+ (S)$ and for all $\psi \in V_1$, construct a lift $\tilde \psi$ such that $\partial \tilde \psi \in B_d (\mathup{id}_{S^1}, \varepsilon)$.
        Since $B_d (\mathup{id}_{S^1}, \varepsilon) \subset U_1$, this means that $\Psi_p [\psi] = [\partial \tilde \psi] \in \pi_N (U_1 \cap \mathup N (\Gamma)) = U$.
        Since quotienting by the group action of $\Homeo_0 (S)$ is an open map, the projection $V$ of $V_1$ in $\MCG (S)$ is an open set .
        It follows that $V$ is an open neighbourhood of $[\mathup{id}_S]$ and is contained in $\Psi_p^{-1} (U)$.
        It will follow that $\Psi_p$ is continuous at $[\mathup{id}_S]$.
        
        \begin{figure} \label{fig:continuity_sinks}
            \def\svgwidth{\textwidth}
            \resizebox{0.5\textwidth}{!}{\Large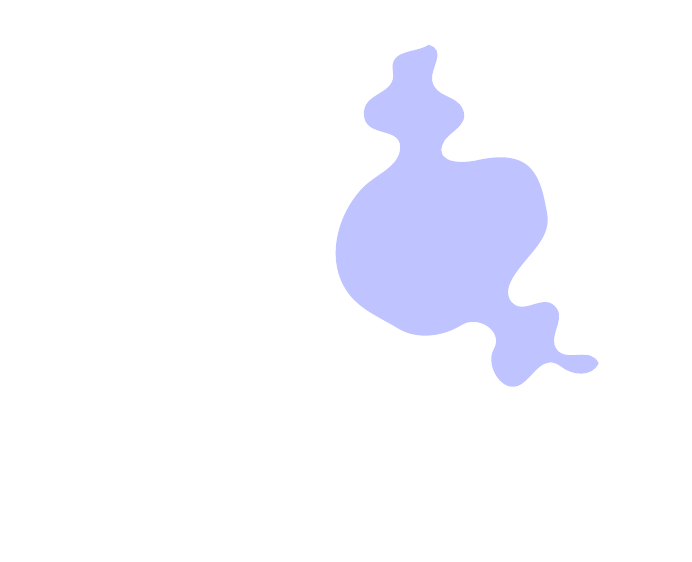}
            \caption{Lifts of the homeomorphisms in $V_1$ and their extensions at infinity.}
        \end{figure}
        
        To obtain the open neighbourhood $V_1$, we choose a finite set of hyperbolic elements $\gamma_1, \gamma_2, \ldots, \gamma_n \in \Gamma$, $n \ge 3$, such that the set $\{(\gamma_1)_\infty, (\gamma_2)_\infty, \ldots, (\gamma_n)_\infty\}$ of their sinks is $\frac{\varepsilon}{2}$-dense in $S^1$.
        This is possible because the set $(\Gamma)_\infty$ of all sinks of hyperbolic elements in $\Gamma$ is dense in $S^1$, and $S^1$ is compact.
        For each $i$, choose an oriented closed curve $\alpha_i$ in $S$ based at the point $s$ such that the holonomy around $\alpha_i$ is $\varphi [\alpha_i] = \gamma_i$, where $\varphi : \pi_1 (S, s) \to \Gamma$ is the holonomy representation induced by the pointed universal cover $p : (\HH^2, \tilde s) \to (S, s)$.
        Since the set $\bigcup_{i = 1}^n \alpha_i$ is a compact set, the continuous function $\mathup{inj}_S$ has a minimum value $\delta > 0$ on it.
        We define $V_1$ to be the set
        \[
        V_1 = \left\{\psi \in \Homeo^+ (S) \middle | d_S(\psi(q), q) < \delta \text{ for all } q \in \bigcup_{i=1}^n \alpha_i \right\}
        \]
        which is a basic open neighbourhood of $\mathup{id}_S$ of $\Homeo^+ (S)$ in the topology of uniform convergence on compact sets.
        
        Now let $\psi \in V_1$.
        Since $d_S (\psi (s), s) < \delta \le \mathup{inj}_S (s)$, there is exactly one lift of $\psi (s)$ in the $\delta$-neighbourhood of the lift $\tilde s$ of $s$.
        Define $\tilde \psi$ to be the unique lift of $\psi$ that maps $\tilde s$ to this point.
        Further, for each $i$ and each $q \in \alpha_i$, since $d_S (\psi (q), q) < \delta \le \mathup{inj}_S (q)$, there is a unique shortest geodesic from $q$ to $\psi (q)$.
        Moving every point $q$ along this geodesic at constant speed yields a homotopy $H$ of the curve $\alpha_i$ to the curve $\psi (\alpha_i)$.
        Lifting the homotopy to universal covers, we obtain a homotopy $\tilde H$ of the (bi-infinite) lift $\tilde \alpha_i$ of $\alpha_i$ passing through $\tilde s$ to the lift $\tilde \psi (\tilde \alpha_i)$ of $\psi (\alpha_i)$ passing through $\tilde \psi (\tilde s)$.
        See Figure 6. 
        If $\tilde \beta$ is the track of the point $\tilde s$ under this homotopy, and $\beta = p (\tilde \beta)$ is the projection of the curve $\tilde \beta$ to $S$, then it is clear that $[\psi (\alpha_i)] \in \pi_1 (S, \psi (s))$ equals $[\beta \cdot \alpha_i \cdot \overline \beta] = c_\beta [\alpha_i]$.
        If $\varphi' : \pi_1 (S, \psi (s)) \to \Gamma$ is the holonomy representation induced by the pointed universal cover $p : (\HH^2, \tilde \psi (\tilde s)) \to (S, \psi (s))$, then we have $\varphi' [\psi (\alpha_i)] = \varphi' (c_\beta (\alpha_i)) = \varphi [\alpha] = \gamma_i$.
        That is, the holonomy around $\psi (\alpha_i)$ equals the holonomy around $\alpha_i$, which is $\gamma_i$.
        However, the holonomy around $\psi (\alpha_i)$ is $\psi_* (\gamma_i)$.
        We infer that $\psi_* (\gamma_i) = \gamma_i$.
        Consequently, $\partial \tilde \psi ((\gamma_i)_\infty) = (\psi_* (\gamma_i))_\infty = (\gamma_i)_\infty$.
        Since $i$ was arbitrary, this is true for all $i$.
        
        Next, we show the membership $\partial \tilde \psi \in B_d (\mathup{id}_{S^1}, \varepsilon)$ as promised.
        For any $q \in S^1$, if $q = (\gamma_i)_\infty$ for some $i$, then trivially $d (\partial \tilde \psi (q), q) = 0 < \varepsilon$.
        Otherwise, $q$ lies in an interval component of $S^1 \setminus \{(\gamma_1)_\infty, (\gamma_2)_\infty, \ldots, (\gamma_n)_\infty\}$ bounded by two sinks $(\gamma_i)_\infty$ and $(\gamma_j)_\infty$.
        Since the set of sinks is $\frac\varepsilon 2$-dense, the length of this interval is at most $\varepsilon$.
        Since the homeomorphism $\partial \tilde \psi$ preserves the circular order on $S^1$, we conclude that $\partial \tilde \psi (q)$ lies in the same interval as does $q$.
        Hence $d (\partial \tilde \psi (q), q) < \varepsilon$, as both lie in an interval of length less than $\varepsilon$.
        Thus $d (\partial \tilde \psi (q), \mathup{id}_{S^1} (q)) < \varepsilon$ for all $q \in S^1$, and so $\partial \tilde \psi \in B_d (\mathup{id}_{S^1}, \varepsilon)$.
        This concludes the proof of continuity of $\Psi_p$.
        
        Finally, we show that $\Psi_p^{-1} : \mathup N (\Gamma)/\Gamma \to \MCG (S)$ is continuous by describing the inverse explicitly.
        Let $\mathup{DE} : \Homeo^+ (S^1) \to \Homeo^+ (\HH^2)$ denote the Douady-Earle extension.
        $\mathup{DE}$ is a continuous function, where $\Homeo^+ (\HH^2)$ has the compact-open topology.
        Suppose $G \in \mathup N (\Gamma) \subset \Homeo^+ (S^1)$ and $\gamma \in \Gamma \subset \mathup{PSL} (2, \RR)$.
        Then since $G$ normalises $\Gamma$ and $\mathup{DE}$ is conformally natural, we have $\mathup{DE} (G) \circ \gamma = \mathup{DE} (G \circ \gamma) = \mathup{DE} (\gamma' \circ G) = \gamma' \circ \mathup{DE} (G)$ for some $\gamma' \in \Gamma$.
        Thus $\mathup{DE}$ maps $\mathup N (\Gamma)/\Gamma$ into $\Homeo^+_\Gamma (\HH^2)$, which is the set of homeomorphisms of $\HH^2$ that are lifts of homeomorphisms of $S$.
        Let $\pi$ denote the function $\Homeo^+_\Gamma (\HH^2) \to \Homeo^+ (S)$, which assigns the homeomorphism of $S$ to each of its lifts.
        
        We show that $\pi$ is continuous.
        Assume that $K \subset S$ is a compact set and $U \subset S$ is an open set, and consider the open set $B (K, U) = \{\psi \in \Homeo^+ (S) \vert \psi (K) \subset U\}$ in the compact-open topology in $\Homeo^+ (S)$.
        Then there exists a large enough closed ball $D \subset \HH^2$ such that $K$ is contained in $p (D)$.
        Also, $p^{-1} (U) \subset \HH^2$ is open.
        Let $B (D, p^{-1} (U))$ be the set $\left\{\phi \in \Homeo^+ (\HH^2) \mid \phi (D) \subset p^{-1} (U)\right\}$.
        Then for any $\phi \in B (D, p^{-1} (U))$ and any $s \in K \subset p (D)$ with lift $\tilde s \in D$, we have
        $\pi (\phi) (s) = p (\phi (\tilde s)) \in p (p^{-1} (U)) = U$.
        Thus $\phi (B (D, p^{-1} (U))) \subset B (K, U)$ and so $\pi$ is continuous.
        
        It follows that the composition $\pi \circ \mathup{DE}$ is continuous.
        Each $\gamma \in \Gamma$ is a deck transformation of the universal cover $p : \HH^2 \to S$.
        Hence for each $G \in \mathup N (\Gamma)$ and $\gamma \in \Gamma$, we have $\pi \circ \mathup{DE} (G \circ \gamma) = \pi (\mathup{DE} (G) \circ \gamma) = \pi \circ \mathup{DE} (G)$, and so $\pi \circ \mathup{DE}$ descends to a continuous function $\overline{\pi \circ \mathup{DE}} : \mathup N (\Gamma)/\Gamma \to \MCG (S)$.
        For $[G] \in \mathup N (\Gamma)/\Gamma$, a lift of $\overline{\pi \circ \mathup{DE}} [G]$ to $\HH^2$ is simply $\mathup{DE} (G)$, which extends at infinity to $G$, so $\Psi_p \circ \overline{\pi \circ \mathup{DE}}$ is identity on $\mathup N (\Gamma)/\Gamma$.
        Hence $\overline{\pi \circ \mathup{DE}}$ is a right inverse to $\Psi_p$.
        But we already know that $\Psi_p$ is a bijection, so in fact $\overline{\pi \circ \mathup{DE}} = \Psi_p^{-1}$.
        Therefore $\Psi_p^{-1}$ is continuous, and $\Psi_p$ is a homeomorphism.
    \end{proof}
    
    We are ready to prove the continuity of the action function $A$.
    The thrust of the argument is already proved in Lemma \ref{lem:mcg_normaliser}, we only need to finish up the point set topological details.
    \begin{theorem}
        \label{thm:action_continuous}
        The action function $A : \MCG (S) \times \mathcal T (S) \to \mathcal T (S)$ given by equation (\ref{eq:action}) is continuous.
    \end{theorem}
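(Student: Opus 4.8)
The plan is to transport the action through the two coordinate homeomorphisms and reduce to a routine statement about descending a continuous map through a quotient. By Proposition \ref{prop:action_in_coordinates}, when we identify $\MCG(S)$ with $\mathup{N}(\Gamma)/\Gamma$ via $\Psi_p$ and $\mathcal{T}(S)$ with $\mathcal{T}(p)$ via $\Phi_p$, the action $A$ becomes the map $\bar m\bigl([G],[F]\bigr) = [F\circ G^{-1}]$. Since $\Phi_p$ is a homeomorphism by Definition \ref{def:marked_moduli_space_topology} and $\Psi_p$ is a homeomorphism by Lemma \ref{lem:mcg_normaliser}, and since $A = \Phi_p^{-1}\circ\bar m\circ(\Psi_p\times\Phi_p)$, it suffices to prove that $\bar m$ is continuous.

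First I would lift $\bar m$ to the level of representatives. Define $m : \mathup{N}(\Gamma)\times\widetilde{\mathcal{T}}(p) \to \Homeo^+(S^1)$ by $m(G,F) = F\circ G^{-1}$. As composition and inversion are continuous in the topological group $\Homeo^+(S^1)$, the map $m$ is continuous; and it in fact lands in $\widetilde{\mathcal{T}}(p)$, since $G\Gamma G^{-1} = \Gamma$ and $F\Gamma F^{-1}\subset\mathup{PSL}(2,\RR)$ give $(FG^{-1})\Gamma(FG^{-1})^{-1} = F\Gamma F^{-1}\subset\mathup{PSL}(2,\RR)$. Post-composing with the quotient map $\pi_p$ yields a continuous map $\pi_p\circ m : \mathup{N}(\Gamma)\times\widetilde{\mathcal{T}}(p)\to\mathcal{T}(p)$.

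Next I would descend $\pi_p\circ m$ through the product quotient $q = \pi_N\times\pi_p$. That $\pi_p\circ m$ is constant on the fibers of $q$ is precisely the independence of representatives established in the proof of Proposition \ref{prop:action_in_coordinates}: left-multiplying $F$ by $\sigma\in\mathup{PSL}(2,\RR)$ only left-multiplies the output, while right-multiplying $G$ by an element of $\Gamma$ (a move that represents the same coset because $\Gamma$ is normal in $\mathup{N}(\Gamma)$) changes $F\circ G^{-1}$ by a left factor of the form $F\gamma F^{-1}\in\mathup{PSL}(2,\RR)$, which is again absorbed into the $\mathup{PSL}(2,\RR)$-coset. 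Hence $\pi_p\circ m$ factors set-theoretically as $\bar m\circ q$.

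The one genuinely topological point --- which I expect to be the only real obstacle --- is that this factorization is \emph{continuous}, since products of quotient maps need not be quotient maps in general. Here the escape is that $q$ is an \emph{open} quotient map. Indeed, each of $\pi_N$ and $\pi_p$ is a quotient by a group action (left multiplication by $\Gamma$, respectively by $\mathup{PSL}(2,\RR)$), hence open, because for open $U$ the saturation $\bigcup_{g} g\cdot U$ is a union of open sets; and a finite product of open surjections is open on basic open sets and therefore open. An open continuous surjection is a quotient map, so $q$ is a quotient map, and the continuous, fiber-constant map $\pi_p\circ m$ induces a continuous $\bar m$. With the reduction of the first paragraph, this proves that $A$ is continuous.
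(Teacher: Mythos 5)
Your proposal is correct and follows essentially the same route as the paper: reduce via the homeomorphisms $\Phi_p$ and $\Psi_p$ to the map $(G,F)\mapsto F\circ G^{-1}$ on representatives, which is continuous by the topological group structure of $\Homeo^+(S^1)$, and then descend through $\pi_N\times\pi_p$ using that quotients by group actions are open maps, so the product is again an (open) quotient map. The paper's proof is exactly this argument, organized around the commutative diagram in Figure 7.
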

    \begin{proof}
        Fix a universal cover $p : \HH^2 \to S$ with deck group $\Gamma \subset \mathup{PSL} (2, \RR)$, a torsion-free Fuchsian group of the first kind.
        Refer to the commutative diagram in Figure 7. 
        \begin{figure} \label{fig:continuity_point_set_topology}
            \tikzcdset{column sep/huger/.initial=12em}
            \tikzcdset{every label/.append style = {font = \small}}
            $\begin{tikzcd}[column sep = huger, row sep = large]
                \Homeo^+ (S^1) \times \Homeo^+ (S^1) \arrow [r, "{(G, F) \mapsto F \circ G^{-1}}"] & \Homeo^+ (S^1) \\
                \mathup N (\Gamma) \times \widetilde {\mathcal T} (p) \arrow [r, "{(G, F) \mapsto F \circ G^{-1}}"] \arrow [u, hook] \arrow [d, two heads, "{\pi_N \times \pi_p}"'] & \widetilde {\mathcal T} (p) \arrow [u, hook] \arrow [d, two heads, "{\pi_p}"] \\
                \mathup N (\Gamma)/\Gamma \times \mathcal T (p) \arrow[r,"{([\partial \tilde \psi], [\partial \tilde f]) \mapsto [(\partial \tilde f) \circ (\partial \tilde \psi)^{-1}]}"] & \mathcal T (p) \\
                \MCG (S) \times \mathcal T (S) \arrow[r, "{([\psi], [ X, f]) \mapsto [X, f \circ \psi^{-1}]}", "A"'] \arrow [u, "{\Psi_p \times \Phi_p}", "{\cong}"'] & \mathcal T (S) \arrow[u, leftarrow, "{\Phi_p^{-1}}"', "{\cong}"]
            \end{tikzcd}$
            \caption{Continuity of the action function $A$.}
        \end{figure}
        The proposition is that the function in the bottom row is continuous.
        This is equivalent to the continuity of the function in the row just above it (the third row), because $\Phi_p$ is a homeomorphism by definition, and $\Psi_p$ is a homeomorphism by Lemma \ref{lem:mcg_normaliser}.
        The function in the top row is certainly continuous, because multiplication and inversion are continuous functions of the topological group $\Homeo^+ (S^1)$.
        Restricting to subspaces in the second row, we see that the function in the second row is continuous as well.
        The quotient maps $\pi_N : \mathup N (\Gamma) \to \mathup N (\Gamma)/\Gamma$ and $\pi_p : \widetilde {\mathcal T} (p) \to \mathcal T (p)$ are open, because they are quotients by the group actions of $\Gamma$ and $\mathup{PSL} (2, \RR)$ respectively.
        Therefore the product map $\pi_N \times \pi_p$ is also open and surjective, hence a quotient map.
        By the universal property of quotients, the map in the second row descends to the function in the third row which is therefore continuous, as required.
        Thus the action function $A$ is continuous.
    \end{proof}
    
    \begin{cor} \label{cor:action_by_homeomorphisms}
        The mapping class group acts on the marked moduli space by homeomorphisms.
    \end{cor}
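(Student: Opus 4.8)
The plan is to deduce this directly from the continuity of the action function $A$ established in Theorem \ref{thm:action_continuous}, together with the fact that $A$ is a group action. Fix a mapping class $[\psi] \in \MCG(S)$. The map $A_{[\psi]} : \mathcal T(S) \to \mathcal T(S)$ is simply the restriction of $A$ to the slice $\{[\psi]\} \times \mathcal T(S)$; more precisely, $A_{[\psi]} = A \circ \iota_{[\psi]}$, where $\iota_{[\psi]} : \mathcal T(S) \to \MCG(S) \times \mathcal T(S)$ is the map $[X,f] \mapsto ([\psi], [X,f])$. First I would note that $\iota_{[\psi]}$ is continuous, being the product of the constant map at $[\psi]$ with the identity map of $\mathcal T(S)$. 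Since $A$ is continuous, the composite $A_{[\psi]}$ is continuous as well.

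It then remains to check that $A_{[\psi]}$ is a bijection whose inverse is also continuous. Because $A$ is a group action, one computes $A_{[\psi]} \circ A_{[\psi]^{-1}} = A_{[\psi] \cdot [\psi]^{-1}} = A_{[\mathup{id}_S]} = \mathup{id}_{\mathcal T(S)}$, and symmetrically $A_{[\psi]^{-1}} \circ A_{[\psi]} = \mathup{id}_{\mathcal T(S)}$. Hence $A_{[\psi]}$ is a bijection with inverse $A_{[\psi]^{-1}}$, and this inverse is continuous by exactly the argument of the previous paragraph applied to the mapping class $[\psi]^{-1}$. Therefore each $A_{[\psi]}$ is a homeomorphism of $\mathcal T(S)$, which is the assertion of the corollary.

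I do not anticipate any genuine obstacle here, since all the analytic content has already been absorbed into the proof that $A$ is continuous. The one point deserving a moment's care is the variance of the action: one should verify from the formula $A_{[\psi]}([X,f]) = [X, f \circ \psi^{-1}]$ that the assignment $[\psi] \mapsto A_{[\psi]}$ genuinely respects the group law (so that the identity $A_{[\psi]} \circ A_{[\psi]^{-1}} = A_{[\mathup{id}_S]}$ holds and furnishes a two-sided continuous inverse). This is immediate from equation (\ref{eq:action}), and indeed was recorded in the proposition preceding Theorem \ref{thm:main} showing that $A$ is a well-defined action.
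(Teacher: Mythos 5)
Your proposal is correct and follows essentially the same route as the paper: restrict the continuous action function $A$ to the slice $\{[\psi]\} \times \mathcal T(S)$ to get continuity of $A_{[\psi]}$, and observe that $A_{[\psi]^{-1}}$ is a continuous two-sided inverse. Your additional verification that the assignment $[\psi] \mapsto A_{[\psi]}$ respects the group law is a reasonable point of care, already covered by the paper's earlier well-definedness proposition.
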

    \begin{proof}
        For each fixed mapping class $[\psi] \in \MCG (S)$, its action on the marked moduli space is given by  the function $A_{[\psi]} : \mathcal T (S) \to \mathcal T (S)$ which is $A_{[\psi]} [X, f] = A ([\psi], [X, f])$ and is continuous by Theorem \ref{thm:action_continuous}.
        $A_{[\psi^{-1}]}$ is evidently a continuous inverse, so $A_{[\psi]}$ is a homeomorphism.
    \end{proof}
    
    \section{Embedding the Marked Moduli Space into the Character Space}
    \label{sec:embedding}
    
    In this section, we prove that the marked moduli space $\mathcal T (S)$ embeds into the character space $X (\pi_1 (S), \mathup{PSL} (2, \RR)) = \operatorname{Hom} (\pi_1 (S), \mathup{PSL} (2, \RR))/\mathup{PSL} (2, \RR)$.
    This will imply that in the case of finite type surfaces $S$, that is, if the Euler characteristic $\chi (S)$ is finite, the topology on $\mathcal T (S)$ coincides with the usual topology on Teichm\"uller space.
    Here the representation space $\operatorname{Hom} (\pi_1 (S, s), \mathup{PSL} (2, \RR))$ has the topology of pointwise convergence, and the character space $X (\pi_1 (S, s), \mathup{PSL} (2, \RR))$ has the quotient topology.
    
    The marked moduli space injects into the character space in the obvious way via the character of the holonomy representation.
    We describe it as follows.
    Let $s \in S$ be a basepoint on $S$.
    For a marked hyperbolic structure $[X, f] \in \mathcal T (S)$, we choose the basepoint on $X$ to be $x = f (s)$.
    The marking homeomorphism $f : S \to X$ induces an isomorphism of fundamental groups $f_* : \pi_1 (S, s) \to \pi_1 (X, x)$.
    Now choose a pointed universal cover $p_X : (\HH^2, \tilde x) \to (X, x)$ with deck group $\Gamma_X \subset \mathup{PSL} (2, \RR)$.
    We have a holonomy representation $\varphi_X : \pi_1 (X, x) \xrightarrow{\cong} \Gamma_X \hookrightarrow \mathup{PSL} (2, \RR)$.
    This leads to the representation $\rho_{[X, f]} = \varphi_X \circ f_* : \pi_1 (S, s) \to \mathup{PSL} (2, \RR)$.
    
    \begin{definition}
        We define $\Phi_{\mathup{at}} [X, f]$ to be the character of $\rho_{[X, f]}$ in $X (\pi_1 (S, s), \mathup{PSL} (2, \RR))$.
    \end{definition}
    
    \begin{prop}
        $\Phi_{\mathup{at}} : \mathcal T (S) \to X (\pi_1 (S, s), \mathup{PSL} (2, \RR))$ is a well defined and injective function.
    \end{prop}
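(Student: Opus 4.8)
The plan is to establish the two assertions — well-definedness and injectivity — by translating each into a statement about holonomy representations, using the bookkeeping from Section~\ref{sec:notation}. For well-definedness I must check that the character of $\rho_{[X,f]} = \varphi_X \circ f_*$ is unaffected by (i) the choice of pointed universal cover $p_X : (\HH^2, \tilde x) \to (X, x)$, and (ii) the choice of representative $(X, f)$ of the class $[X, f]$. Since a character is by definition the $\mathup{PSL}(2,\RR)$-conjugacy class of a representation, it suffices in each case to show that the representation changes only by conjugation by an element of $\mathup{PSL}(2,\RR)$.

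For (i), recall from Section~\ref{sec:notation} that replacing the lift $\tilde x$ by $\gamma \tilde x$ (for $\gamma \in \Gamma_X$) conjugates $\varphi_X$ by $\gamma$, and that any other covering map has the form $p_X \circ \sigma$, which conjugates $\varphi_X$ by $\sigma^{-1}$; in both cases the conjugating element lies in $\mathup{PSL}(2,\RR)$, so the character is unchanged. For (ii), let $(Y, g)$ be another representative, so there is an isometry $\varphi : X \to Y$ homotopic to $g \circ f^{-1}$, equivalently $g \simeq \varphi \circ f$. Lifting $\varphi$ to an isometry $\sigma \in \mathup{PSL}(2,\RR)$ of $\HH^2$ gives $\Gamma_Y = \sigma \Gamma_X \sigma^{-1}$, and the holonomy relation of Section~\ref{sec:notation} reads $\varphi_Y \circ \varphi_* = c_\sigma \circ \varphi_X$, where $c_\sigma$ denotes conjugation by $\sigma$ in $\mathup{PSL}(2,\RR)$. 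Since $g \simeq \varphi \circ f$ induces $g_* = \varphi_* \circ f_*$ up to an inner automorphism arising from the basepoint track of the homotopy (which itself corresponds under holonomy to conjugation by an element of $\Gamma_Y \subset \mathup{PSL}(2,\RR)$), I obtain $\rho_{[Y,g]} = \varphi_Y \circ g_* = \sigma \, \rho_{[X,f]} \, \sigma^{-1}$ up to such a conjugation. Hence $\rho_{[Y,g]}$ and $\rho_{[X,f]}$ are conjugate, and $\Phi_{\mathup{at}}$ is well defined.

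For injectivity, suppose $\Phi_{\mathup{at}}[X,f] = \Phi_{\mathup{at}}[Y,g]$, so there is $\sigma \in \mathup{PSL}(2,\RR)$ with $\rho_{[Y,g]} = \sigma \, \rho_{[X,f]} \, \sigma^{-1}$. Comparing images gives $\Gamma_Y = \sigma \Gamma_X \sigma^{-1}$, so $\sigma$ is equivariant and descends to an isometry $\varphi : X \to Y$ whose lift is $\sigma$. It then remains to show that $\varphi \circ f$ is homotopic to $g$, which is exactly the condition for $[X,f] = [Y,g]$. From $\varphi_Y \circ \varphi_* = c_\sigma \circ \varphi_X$ I compute $\varphi_Y \circ (\varphi \circ f)_* = \sigma \, \rho_{[X,f]} \, \sigma^{-1} = \rho_{[Y,g]} = \varphi_Y \circ g_*$, and cancelling the injective holonomy $\varphi_Y$ shows $(\varphi \circ f)_*$ and $g_*$ agree up to conjugacy. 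Because $S$ is an aspherical surface and $Y$ is a $K(\pi_1(Y),1)$ (its universal cover is $\HH^2$), there is a standard bijection between free homotopy classes of maps $S \to Y$ and conjugacy classes of homomorphisms $\pi_1(S) \to \pi_1(Y)$; therefore $\varphi \circ f \simeq g$, giving $[X,f] = [Y,g]$.

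The main obstacle I anticipate is the careful management of basepoints and the attendant inner automorphisms, together with the rigorous invocation of the $K(\pi,1)$ classification of homotopy classes of maps when the target is a possibly infinite-type surface. The former is a recurring bookkeeping issue that I would handle by fixing $x = f(s)$ and $y = g(s)$ once and for all and tracking the basepoint path of each homotopy, absorbing every resulting change-of-basepoint isomorphism into a $\Gamma$-conjugation. For the latter, the key point is that every hyperbolic surface under consideration has contractible universal cover $\HH^2$ and is a CW complex, so the correspondence between free homotopy classes of maps and conjugacy classes of $\pi_1$-homomorphisms applies without any finiteness hypothesis on $\chi(S)$.
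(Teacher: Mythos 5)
Your proposal is correct and follows essentially the same route as the paper: in each case (change of pointed universal cover, change of marking within its homotopy class via the basepoint track, change of representative via a lifted isometry) you show the holonomy representation changes only by conjugation in $\mathup{PSL}(2,\RR)$, and for injectivity you descend the conjugating element to an isometry and invoke the $K(\pi,1)$ correspondence between (conjugacy classes of) $\pi_1$-homomorphisms and (free) homotopy classes of maps. The only cosmetic difference is that the paper tracks basepoints explicitly enough to get $(\psi\circ f)_* = g_*$ on the nose rather than merely up to conjugacy, but your free-homotopy formulation suffices for the equivalence relation defining $\mathcal T(S)$.
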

    \begin{proof}
        We have to show that the character $[\rho_{[X, f]}]$ is independent of the choice of the pointed universal cover $p_X$ and independent of the representative $(X, f)$ of the marked hyperbolic structure.
        A different pointed universal cover is of the form $p_X \circ \sigma^{-1} : (\HH^2, \sigma (\tilde x)) \to (X, x)$ for some $\sigma \in \mathup{PSL} (2, \RR)$.
        Let $\rho_{[X, f]}'$ be the representation obtained using the new pointed universal cover.
        Suppose $\gamma$ is a loop in $S$ based at $s$, so that $[\gamma] \in \pi_1 (S, s)$.
        Suppose that the lift of $f (\gamma)$ to the universal cover (with respect the covering map $p_X$) starting at $\tilde x$ has endpoint $\tilde x'$.
        Then the lift, with respect to the covering map $p_X \circ \sigma^{-1}$, starting at $\sigma (\tilde x)$ has endpoint $\sigma (\tilde x')$.
        Consequently, the deck transformation sending $\sigma (\tilde x)$ to $\sigma (\tilde x')$ is $\sigma \circ \varphi_X (f_* [\gamma]) \circ \sigma^{-1}$.
        But this deck transformation is precisely the holonomy $\rho'_{[X, f]} [\gamma]$ with respect to the new pointed universal cover, which we are seeking.
        In other words, it is the conjugate of $\rho_{[X, f]} [\gamma]$ by $\sigma \in \mathup{PSL} (2, \RR)$, for all $[\gamma] \in \pi_1 (S, s)$.
        So we conclude that the representation $\rho_{[X, f]}'$ is conjugate to $\rho_{[X, f]}$ by some element of $\mathup{PSL} (2, \RR)$, as was to be shown.
        Thus the character $[\rho_{[X, f]}]$ is independent of the pointed universal cover $p_X$.
        
        Next, we show that the character $[\rho_{[X, f]}]$ is independent of the choice of the marking map $f$ in its homotopy class.
        Let $f'$ be another marking map homotopic to $f$.
        Suppose that the track of the basepoint $s$ under this homotopy is a curve $\beta$, with endpoint $x' = f' (s)$.
        Then we have the relation $f_* = c_\beta \circ f_*'$ between the induced maps on fundamental groups with basepoints $x$ and $x'$.
        Recall that here $c_\beta [\gamma] = [\beta \cdot \gamma \cdot \overline \beta]$ for all $[\gamma] \in \pi_1 (X, x')$.
        Lifting $\beta$ to the universal cover of $X$, we get a curve $\tilde \beta$ which starts at $\tilde x$ (above $x$), and ends at $\tilde x'$ (above $x'$).
        The pointed universal cover $p_X : (\HH^2, \tilde x') \to (X, x')$ yields a new holonomy representation $\varphi_X' : \pi_1 (X, x') \to \Gamma_X \hookrightarrow \mathup{PSL} (2, \RR)$ which satisfies the relation $\varphi_X' = \varphi_X \circ c_\beta$.
        Thus the new representation is $\rho_{[X, f]}' = \varphi_X' \circ f_*'$, which equals $\varphi_X' \circ f_*' = (\varphi_X \circ c_\beta) \circ f_*' = \varphi_X \circ (c_\beta \circ f_*') = \varphi_X \circ f_* = \rho_{[X, f]}$.
        Thus the representation, and therefore its character, is independent of the choice of the marking map in its homotopy class.
        
        To show that the character $[\rho_{[X, f]}]$ does not depend on the representative $(X, f)$ of the marked hyperbolic structure, suppose that $(Y, g)$ is another representative.
        This means that there exists an isometry $\psi : X \to Y$ such that $g$ is homotopic to $\psi \circ f$.
        We have already shown that homotopic marking maps yield the same holonomy representation, so we may assume that $g = \psi \circ f$.
        The isometry $\psi$ lifts to an isometry of universal covers $\sigma = \tilde \psi \in \mathup{PSL} (2, \RR)$.
        As before, the choice of a pointed universal cover $p_Y : (\HH^2, \tilde y) \to (Y, y)$, where $y = g (s)$, leads to a holonomy representation $\varphi_Y : \pi_1 (Y, y) \to \Gamma_Y \hookrightarrow \mathup{PSL} (2, \RR)$.
        For every $[\gamma] \in \pi_1 (S, s)$, we have $\rho_{[Y, g]} [\gamma] = \varphi_Y (g_* [\gamma]) = \varphi_Y ((\psi \circ f)_* [\gamma]) = \varphi_Y (\psi_* (f_* [\gamma])) = \psi_* (\varphi_X (f_* [\gamma])) = \psi_* (\rho_{[X, f]} [\gamma])$.
        However $\psi_* : \Gamma_X \to \Gamma_Y$ is conjugation by $\tilde \psi = \sigma$, so $\rho_{[Y, g]} [\gamma] = \psi_* (\rho_{[X, f]} [\gamma]) = \sigma \circ \rho_{[X, f]} [\gamma] \circ \sigma^{-1}$.
        In words, $\rho_{[Y, g]} [\gamma]$ is the conjugate of $\rho_{[X, f]} [\gamma]$ by $\sigma \in \mathup{PSL} (2, \RR)$.
        This holds for all $[\gamma] \in \pi_1 (S, s)$, so we conclude that the representation $\rho_{[Y, g]}$ is conjugate to $\rho_{[X, f]}$ by some element of $\mathup{PSL} (2, \RR)$.
        We conclude that the character $[\rho_{[X, f]}]$ is independent of the choice of representative $(X, f)$ of the marked hyperbolic structure.
        Therefore $\Phi_{\mathup{at}}$ is a well defined function.
        
        Finally, we show that $\Phi_{\mathup{at}}$ is injective.
        Suppose $[X, f], [Y, g] \in \mathcal T (S)$ are marked hyperbolic structures such that $[\rho_{[X, f]}] = [\rho_{[Y, g]}]$.
        This means that there is an element $\sigma \in \mathup{PSL} (2, \RR)$ such that $\sigma \circ \rho_{[X, f]} [\gamma] \circ \sigma^{-1} = \rho_{[Y, g]} [\gamma]$ for every $[\gamma] \in \pi_1 (S, s)$.
        Thus $\sigma \circ \rho_{[X, f]} [\gamma] = \rho_{[Y, g]} [\gamma] \circ \sigma$, so $\sigma$ is $\pi_1 (S, s)$-equivariant.
        Hence $\sigma$ descends to an isometry $\psi : X \to Y$, and we have $(\psi \circ f)_* = g_*$ as homomorphisms $\pi_1 (S, s) \to \pi_1 (Y, y)$.
        Since the surface $S$ is a $K (\pi_1 (S, s), 1)$ classifying space, we conclude that $\psi \circ f$ is homotopic to $g$.
        Therefore $[X, f] = [Y, g]$, and $\Phi_{\mathup{at}}$ is injective.
    \end{proof}
    
    Since the topology on $\mathcal T (S)$ is defined using a fixed universal cover $p$ and the function $\Phi_p$, in order to prove $\Phi_{\mathup{at}}$ is an embedding, we must compute $\Phi_{\mathup{at}}$ in terms of $\Phi_p$.
    To that end, fix a pointed universal cover $p: (\HH^2, \tilde s) \to (S, s)$ with deck group $\Gamma \subset \mathup{PSL} (2, \RR)$, which is a torsion-free Fuchsian group of the first kind.
    This leads to a holonomy representation $\rho_p = \varphi : \pi_1 (S, s) \to \Gamma \hookrightarrow \mathup{PSL} (2, \RR)$.
    For any marked hyperbolic structure $[X, f]$ with a pointed universal cover $p_X : (\HH^2, \tilde x) \to (X, x)$, where $x = f (s)$, the marking map $f$ lifts to a map $\tilde f$, which extends, by Proposition \ref{prop:homeomorphism_at_infinity} (\ref{prop:homeomorphism_at_infinity_extends}), to a homeomorphism at infinity $\partial \tilde f$.
    As before, we have the representation $\rho_{[X, f]} = \varphi_X \circ f_*$.
    This is related to $\partial \tilde f$ as follows.
    \begin{prop} \label{prop:homeomorphism_to_representation}
        For any $[\gamma] \in \pi_1 (S, s)$, we have
        \begin{equation}\label{eq:homeomorphism_to_representation_specific}
            \rho_{[X, f]} [\gamma] = (\partial \tilde f) \circ \rho_p [\gamma] \circ (\partial \tilde f)^{-1}
        \end{equation}
        In general, if $\Phi_p [X, f]$ is represented by the homeomorphism $F$, then $\Phi_{\mathup{at}} [X, f]$ is represented by the representation $R$ which satisfies, for all $[\gamma] \in \pi_1 (S)$,
        \begin{equation}\label{eq:homeomorphism_to_representation_general}
            R [\gamma] = F \circ \rho_p [\gamma] \circ F^{-1}
        \end{equation}
    \end{prop}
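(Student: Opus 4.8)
The plan is to reduce both identities to the $\Gamma$-equivariance of the boundary map $\partial \tilde f$ established in Proposition \ref{prop:homeomorphism_at_infinity} (\ref{prop:homeomorphism_at_infinity_equivariant}), together with the standard intertwining of holonomy representations recalled in Section \ref{sec:notation}. First I would fix the lift $\tilde f$ of $f$ to be the one compatible with the chosen pointed covers, namely $\tilde f(\tilde s) = \tilde x$; for this lift the holonomy representation $\rho_p = \varphi$ of $S$ and the holonomy representation $\varphi_X$ of $X$ are intertwined by $f_*(\varphi[\gamma]) = \varphi_X(f_*[\gamma])$. Here I must keep the two meanings of $f_*$ distinct: on the left it is the deck-group isomorphism $\Gamma \to \Gamma_X$, which is conjugation by $\tilde f$, while on the right it is the $\pi_1$ functor. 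Since $\rho_{[X,f]} = \varphi_X \circ f_*$, this reads
\[
\rho_{[X,f]}[\gamma] = \varphi_X(f_*[\gamma]) = f_*(\varphi[\gamma]) = f_*(\rho_p[\gamma]).
\]

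Next I would apply the $\Gamma$-equivariance of $\partial \tilde f$ to the element $\rho_p[\gamma] \in \Gamma$. By Proposition \ref{prop:homeomorphism_at_infinity} (\ref{prop:homeomorphism_at_infinity_equivariant}), viewing both $\rho_p[\gamma]$ and its deck-group image as homeomorphisms of $S^1$,
\[
(\partial \tilde f) \circ \rho_p[\gamma] = f_*(\rho_p[\gamma]) \circ (\partial \tilde f) = \rho_{[X,f]}[\gamma] \circ (\partial \tilde f),
\]
where the last equality uses the displayed identity above. Right-composing with $(\partial \tilde f)^{-1}$ gives exactly equation (\ref{eq:homeomorphism_to_representation_specific}). (Alternatively one can write $\rho_{[X,f]}[\gamma] = \tilde f \circ \rho_p[\gamma] \circ \tilde f^{-1}$ as an identity of isometries of $\HH^2$ and pass to the circle using parts (\ref{prop:homeomorphism_at_infinity_composition}) and (\ref{prop:homeomorphism_at_infinity_isometrycomp}), noting that $\mathup{PSL}(2, \RR)$ elements extend to themselves; the equivariance route is shorter.)

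For the general statement I would use that any representative $F$ of the right coset $\Phi_p[X,f] = [\partial \tilde f]$ has the form $F = \sigma \circ (\partial \tilde f)$ for some $\sigma \in \mathup{PSL}(2, \RR)$. Setting $R[\gamma] := F \circ \rho_p[\gamma] \circ F^{-1}$ and substituting (\ref{eq:homeomorphism_to_representation_specific}) yields $R[\gamma] = \sigma \circ \rho_{[X,f]}[\gamma] \circ \sigma^{-1}$. In particular $R[\gamma] \in \mathup{PSL}(2, \RR)$, so $R$ is an honest representation, and it is globally conjugate to $\rho_{[X,f]}$ by the single element $\sigma$; hence $R$ and $\rho_{[X,f]}$ have the same character and $R$ represents $\Phi_{\mathup{at}}[X,f]$, which is equation (\ref{eq:homeomorphism_to_representation_general}).

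I do not expect a serious obstacle here: the statement is essentially a dictionary entry translating the deck-transformation picture at infinity into the representation-variety picture. The only steps requiring genuine care are the basepoint bookkeeping---ensuring $\tilde f$ is the basepoint-preserving lift so that $\varphi_X$ and $\varphi$ are correctly intertwined---and consistently distinguishing the two uses of the symbol $f_*$; once these are pinned down, each identity is forced.
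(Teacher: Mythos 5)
Your proposal is correct and follows essentially the same argument as the paper: both establish $\rho_{[X,f]}[\gamma] = f_*(\rho_p[\gamma])$ via the intertwining of holonomy representations and then pass to the circle at infinity, and both handle the general case by writing $F = \sigma \circ (\partial \tilde f)$ and observing that $R$ is globally conjugate to $\rho_{[X,f]}$ by $\sigma$. The only cosmetic difference is that you invoke the $\Gamma$-equivariance of $\partial\tilde f$ (Proposition \ref{prop:homeomorphism_at_infinity}~(\ref{prop:homeomorphism_at_infinity_equivariant})) where the paper extends the conjugation identity $\tilde f \circ \rho_p[\gamma] \circ \tilde f^{-1}$ to infinity using parts (\ref{prop:homeomorphism_at_infinity_composition}) and (\ref{prop:homeomorphism_at_infinity_isometrycomp}) --- a route you yourself note as the alternative.
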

    \begin{proof}
        $\rho_{[X, f]} [\gamma] = \varphi_X (f_* [\gamma]) = f_* (\varphi [\gamma]) = \tilde f \circ \varphi [\gamma] \circ \tilde f^{-1} = \tilde f \circ \rho_p [\gamma] \circ \tilde f^{-1}$.
        Note that in the second equality, the $f_*$ on the left hand side is the $\pi_1$ functor, whereas on the right hand side, $f_*$ is the isomorphism between the deck groups $\Gamma$ and $\Gamma_X$ induced by $\tilde f$.
        Extending all maps at infinity using Proposition \ref{prop:homeomorphism_at_infinity}, parts (\ref{prop:homeomorphism_at_infinity_composition}) and (\ref{prop:homeomorphism_at_infinity_isometrycomp}), we get $\rho_{[X, f]} [\gamma] = (\partial \tilde f) \circ \rho_p [\gamma] \circ (\partial \tilde f)^{-1}$.
        Any other homeomorphism $F$ representing $\Phi_p [X, f]$ is of the form $\sigma \circ (\partial \tilde f)$ for some $\sigma \in \mathup{PSL} (2, \RR)$.
        Then for all $[\gamma] \in \pi_1 (S)$, we have $\sigma \circ \rho_{[X, f]} \circ \sigma^{-1} = \sigma \circ (\partial \tilde f) \circ \rho_p [\gamma] \circ (\partial \tilde f)^{-1} \circ \sigma^{-1} = (\sigma \circ (\partial \tilde f)) \circ \rho_p [\gamma] \circ (\sigma \circ (\partial \tilde f))^{-1} = F \circ \rho_p [\gamma] \circ F^{-1} = R [\gamma]$.
        Thus the representation $R$ is conjugate to the representation $\rho_{[X, f]}$, and hence the characters of $\rho_{[X, f]}$ and $R$ are equal.
        That is, $R$ represents $\Phi_{\mathup{at}} [X, f]$ and satisfies equation (\ref{eq:homeomorphism_to_representation_general}).
    \end{proof}
    
    We can also compute $\Phi_p$ in terms of $\Phi_{\mathup{at}}$.
    \begin{prop} \label{prop:representation_to_homeomorphism}
        For any $[\gamma] \in \pi_1 (S, s)$, we have
        \begin{equation} \label{eq:representation_to_homeomorphism_specific}
            \partial \tilde f ((\rho_p [\gamma])_\infty) = (\rho_{[X, f]} [\gamma])_\infty
        \end{equation}
        If general, if $\Phi_{\mathup{at}} [X, f]$ is represented by the representation $R$, then $\Phi_p [X, f]$ is represented by the homeomorphism $F$ which satisfies, for all $[\gamma] \in \pi_1 (S, s)$,
        \begin{equation}\label{eq:representation_to_homeomorphism_general}
            F ((\rho_p [\gamma])_\infty) = (R [\gamma])_\infty
        \end{equation}
    \end{prop}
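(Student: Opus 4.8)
The plan is to read this proposition off directly from Proposition \ref{prop:homeomorphism_to_representation} by taking attracting fixed points of both sides. The only genuinely new ingredient is the elementary observation that conjugation by a circle homeomorphism carries sinks to sinks. Note first that equation (\ref{eq:representation_to_homeomorphism_specific}) only makes sense when $\rho_p[\gamma]$ is hyperbolic, so that $(\rho_p[\gamma])_\infty$ is defined; I would state it for exactly those $[\gamma]$, and observe that since $\rho_p = \varphi$ is an isomorphism onto $\Gamma$, the points $(\rho_p[\gamma])_\infty$ range precisely over the set $(\Gamma)_\infty$ of sinks of hyperbolic elements of $\Gamma$, which is dense in $S^1$ because $\Gamma$ is of the first kind.

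First I would record the following fact. Let $g \in \Homeo^+(S^1)$, and let $h$ be a hyperbolic element of $\mathup{PSL}(2,\RR)$ with sink $h_\infty$; if $g \circ h \circ g^{-1}$ is also hyperbolic, then its sink is $g(h_\infty)$. Indeed $h^n(x) \to h_\infty$ for every $x$ other than the source, so $(g h g^{-1})^n(y) = g(h^n(g^{-1}(y))) \to g(h_\infty)$ for every $y \neq g(\text{source})$, which identifies $g(h_\infty)$ as the attracting fixed point of $g h g^{-1}$. With this in hand, equation (\ref{eq:representation_to_homeomorphism_specific}) is immediate: Proposition \ref{prop:homeomorphism_to_representation} gives $\rho_{[X,f]}[\gamma] = (\partial\tilde f)\circ\rho_p[\gamma]\circ(\partial\tilde f)^{-1}$, and since $f_*$ is type preserving (Lemma \ref{lem:type_preserving}) the left-hand side is again hyperbolic, so applying the fact with $g = \partial\tilde f$ and $h = \rho_p[\gamma]$ yields $(\rho_{[X,f]}[\gamma])_\infty = (\partial\tilde f)((\rho_p[\gamma])_\infty)$. (Equivalently, this is just the defining formula $\partial\tilde f(\gamma_\infty) = (f_*(\gamma))_\infty$ together with the holonomy compatibility $\rho_{[X,f]}[\gamma] = f_*(\rho_p[\gamma])$.)

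For the general formula (\ref{eq:representation_to_homeomorphism_general}) I would pass to arbitrary representatives. If $R$ represents $\Phi_{\mathup{at}}[X,f]$, then $R = \tau\circ\rho_{[X,f]}\circ\tau^{-1}$ for some $\tau \in \mathup{PSL}(2,\RR)$. I set $F = \tau\circ(\partial\tilde f)$; since $\tau \in \mathup{PSL}(2,\RR)$, this $F$ represents the right coset $\Phi_p[X,f] = [\partial\tilde f]$. Applying the conjugation-sink fact once more, now with $g = \tau$, and combining with (\ref{eq:representation_to_homeomorphism_specific}) gives $(R[\gamma])_\infty = \tau((\rho_{[X,f]}[\gamma])_\infty) = \tau((\partial\tilde f)((\rho_p[\gamma])_\infty)) = F((\rho_p[\gamma])_\infty)$, which is exactly (\ref{eq:representation_to_homeomorphism_general}). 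I would then note that $F$ is the \emph{unique} homeomorphism of $S^1$ satisfying (\ref{eq:representation_to_homeomorphism_general}): the arguments $(\rho_p[\gamma])_\infty$ exhaust the dense set $(\Gamma)_\infty$, so any homeomorphism obeying (\ref{eq:representation_to_homeomorphism_general}) is determined on a dense set and hence everywhere.

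The argument is essentially formal, so I do not expect a serious obstacle; the main care needed is bookkeeping. Specifically, one must interpret (\ref{eq:representation_to_homeomorphism_specific}) only for those $[\gamma]$ with $\rho_p[\gamma]$ hyperbolic so that the sinks are defined, keep the two meanings of $f_*$ straight (the $\pi_1$ functor versus the induced deck-group isomorphism), and match the conjugating element $\tau$ relating $R$ to $\rho_{[X,f]}$ with the chosen representative $F = \tau\circ(\partial\tilde f)$ of the coset $\Phi_p[X,f]$.
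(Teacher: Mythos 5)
Your proposal is correct and follows essentially the same route as the paper: the paper proves (\ref{eq:representation_to_homeomorphism_specific}) directly from the defining formula $\partial\tilde f(\gamma_\infty) = (f_*(\gamma))_\infty$ together with the holonomy compatibility $f_*(\varphi[\gamma]) = \varphi_X(f_*[\gamma])$ --- the alternative you yourself note parenthetically --- while you reach the same identity by conjugating sinks through Proposition \ref{prop:homeomorphism_to_representation}, and the general case via $F = \tau\circ(\partial\tilde f)$ is identical to the paper's argument with $\sigma$ in place of $\tau$. Your explicit caveat that (\ref{eq:representation_to_homeomorphism_specific}) applies only to $[\gamma]$ with $\rho_p[\gamma]$ hyperbolic, and your uniqueness remark via density of $(\Gamma)_\infty$, are sensible additions that the paper leaves implicit.
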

    \begin{proof}
        To establish equation (\ref{eq:representation_to_homeomorphism_specific}), we use the proof of Proposition \ref{prop:homeomorphism_at_infinity}: we have $\partial \tilde f ((\rho_p [\gamma])_\infty) = (f_* (\rho_p [\gamma]))_\infty = (f_* (\varphi [\gamma]))_\infty = (\varphi_X (f_* [\gamma]))_\infty = (\rho_{[X, f]} [\gamma])_\infty$.
        Note that in the third equality, the $f_*$ on the left hand side is the isomorphism between the deck group $\Gamma$ and $\Gamma_X$ induced by $\tilde f$, whereas on the right hand side, $f_*$ is the $\pi_1$ functor.
        In the general case, since $\Phi_{\mathup{at}} [X, f]$ is represented by $R$, it follows that $R$ is conjugate to the representation $\rho_{[X, f]}$ by some $\sigma \in \mathup{PSL} (2, \RR)$.
        That is, for every $[\gamma] \in \pi_1 (S, s)$, we have $R [\gamma] = \sigma \circ \rho_{[X, f]} [\gamma] \circ \sigma^{-1}$.
        Therefore  the sink $(R [\gamma])_\infty$ is just the image under $\sigma$ of the sink $(\rho_{[X, f]} [\gamma])_\infty$.
        In other words, $(R [\gamma])_\infty = \sigma ((\rho_{[X, f]} [\gamma])_\infty)$, which equals, by equation (\ref{eq:representation_to_homeomorphism_specific}), $\sigma \circ (\partial \tilde f) ((\rho_p [\gamma])_\infty)$.
        Setting $F = \sigma \circ (\partial \tilde f)$, we see that $F$ also represents the same right coset of $\mathup{PSL} (2, \RR)$ in $\Homeo^+ (S^1)$ as that of $(\partial \tilde f)$, and hence represents $\Phi_p [X, f]$.
        Thus we have established equation (\ref{eq:representation_to_homeomorphism_general}).
    \end{proof}
    
    \begin{theorem}
        \label{thm:embedding_character_space}
        $\Phi_{\mathup{at}} : \mathcal T (S) \to X (\pi_1 (S, s), \mathup{PSL} (2, \RR))$ is a topological embedding.
    \end{theorem}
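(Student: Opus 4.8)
The plan is to prove that $\Phi_{\mathup{at}}$ is an embedding by passing through the homeomorphism-at-infinity model and using the two translation formulas of Propositions \ref{prop:homeomorphism_to_representation} and \ref{prop:representation_to_homeomorphism}. Since the topology on $\mathcal T(S)$ is transported from $\mathcal T(p)$ by the homeomorphism $\Phi_p$, it suffices to show that the induced map $\bar\Theta = \Phi_{\mathup{at}} \circ \Phi_p^{-1} : \mathcal T(p) \to X(\pi_1(S,s), \mathup{PSL}(2,\RR))$ is a topological embedding, and then compose with $\Phi_p$.

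First I would establish continuity of $\Phi_{\mathup{at}}$. By Proposition \ref{prop:homeomorphism_to_representation}, a representative $F \in \widetilde{\mathcal T}(p)$ of $\Phi_p[X,f]$ produces the representative $R[\gamma] = F \circ \rho_p[\gamma] \circ F^{-1}$ of $\Phi_{\mathup{at}}[X,f]$. The assignment $\Theta : \widetilde{\mathcal T}(p) \to \operatorname{Hom}(\pi_1(S,s), \mathup{PSL}(2,\RR))$, $F \mapsto (\gamma \mapsto F \circ \rho_p[\gamma] \circ F^{-1})$, is continuous: for each fixed $\gamma$ the map $F \mapsto F \circ \rho_p[\gamma] \circ F^{-1}$ is continuous into $\Homeo^+(S^1)$ because composition and inversion are continuous there, it lands in the closed embedded subgroup $\mathup{PSL}(2,\RR)$, and $\operatorname{Hom}$ carries the topology of pointwise convergence. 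Replacing $F$ by $\sigma \circ F$ conjugates $R$ by $\sigma$, so $\Theta$ is equivariant and descends to a continuous $\bar\Theta : \mathcal T(p) \to X(\pi_1(S,s), \mathup{PSL}(2,\RR))$; hence $\Phi_{\mathup{at}} = \bar\Theta \circ \Phi_p$ is continuous.

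The crux is the reverse direction. Proposition \ref{prop:representation_to_homeomorphism} pins down the corresponding $F$ on the dense set $(\Gamma)_\infty$ of sinks by $F((\rho_p[\gamma])_\infty) = (R[\gamma])_\infty$. I would show that a pointwise-convergent net $R_\alpha \to R$ of representatives lying in the image produces $F_\alpha \to F$ uniformly. By Lemma \ref{lem:type_preserving}, every $R_\alpha[\gamma]$ and $R[\gamma]$ is hyperbolic whenever $\rho_p[\gamma]$ is, so no degeneration to parabolics occurs on the image; since the sink of a hyperbolic element is continuous on the hyperbolic locus of $\mathup{PSL}(2,\RR)$, we get $F_\alpha((\rho_p[\gamma])_\infty) = (R_\alpha[\gamma])_\infty \to (R[\gamma])_\infty = F((\rho_p[\gamma])_\infty)$ for each such $\gamma$, i.e.\ $F_\alpha \to F$ pointwise on $(\Gamma)_\infty$. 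As the $F_\alpha$ and $F$ are all orientation-preserving homeomorphisms (Lemma \ref{lem:circular_order_preserving}), pointwise convergence on a dense set of monotone circle homeomorphisms to a homeomorphism forces uniform convergence, by the same $\tfrac\varepsilon2$-density-plus-monotonicity squeeze used in Lemma \ref{lem:mcg_normaliser}. Hence $F_\alpha \to F$ in $\Homeo^+(S^1)$ and $[F_\alpha] \to [F]$ in $\mathcal T(p)$.

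The remaining point, which I expect to be the main obstacle, is purely point-set: to run the previous paragraph I must lift a convergent net of \emph{characters} $[R_\alpha] \to [R]$, in the subspace topology on the image, to a convergent net of \emph{representatives} $R_\alpha \to R$ in $\operatorname{Hom}$. This is where Nielsen-convexity pays off a second time: each $R$ in the image is a Fuchsian group of the first kind, hence non-elementary and therefore irreducible, so the $\mathup{PSL}(2,\RR)$-conjugation action on the relevant locus of $\operatorname{Hom}$ is proper and the quotient map admits continuous local sections. This legitimises choosing representatives $R_\alpha \to R$ after a suitable conjugation, and shows that the set-theoretic inverse $\bar\Xi$ of $\bar\Theta$, defined on the image by the sink formula and equivariant (so that $\bar\Xi \circ \bar\Theta = \mathrm{id}_{\mathcal T(p)}$), is continuous. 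Granting this, $\bar\Theta$ is a continuous bijection onto its image with continuous inverse, hence an embedding, and composing with $\Phi_p$ shows $\Phi_{\mathup{at}}$ is a topological embedding. In the finite-type case the image is exactly the discrete faithful Fuchsian locus of the character variety, which carries the usual Teichm\"uller topology, yielding the promised reduction.
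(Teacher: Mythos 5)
Your overall architecture is the paper's: both directions are run through Propositions \ref{prop:homeomorphism_to_representation} and \ref{prop:representation_to_homeomorphism}, and the hard (``if'') direction uses exactly the paper's density-of-sinks plus monotonicity squeeze to upgrade pointwise convergence of $F_i$ on $(\Gamma)_\infty$ to uniform convergence. Your continuity argument is a mild variant: you descend the equivariant map $F \mapsto (\gamma \mapsto F \circ \rho_p[\gamma] \circ F^{-1})$ through the quotient $\pi_p$, where the paper instead builds an explicit continuous section $\Sigma_1$ of $\pi_p$ by normalising $F$ to fix $0,1,\infty$; both are fine, and yours is arguably cleaner since it only uses that $\pi_p$ is a quotient map.

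The one genuine soft spot is the step you yourself flag as the main obstacle: lifting a convergent net of characters $[R_\alpha] \to [R]$ to a convergent net of representatives. You assert that non-elementarity makes the conjugation action proper and that ``the quotient map admits continuous local sections,'' but properness alone does not yield continuous sections, and the usual slice-theorem machinery is not off the shelf here: $\pi_1(S)$ is not finitely generated, so $\operatorname{Hom}(\pi_1(S,s),\mathup{PSL}(2,\RR))$ with the pointwise topology is not a finite-dimensional variety or manifold, and you would need to verify the hypotheses of something like Palais's slice theorem on this space. The paper sidesteps all of this with an explicit \emph{global} continuous section $\Sigma_2$ on the image: fix three elements $\alpha_1,\alpha_2,\alpha_3 \in \pi_1(S,s)$ whose images are hyperbolic with distinct, positively ordered sinks for every class in the image, and conjugate each $\hat\rho$ by $M\bigl((\hat\rho(\alpha_1))_\infty,(\hat\rho(\alpha_2))_\infty,(\hat\rho(\alpha_3))_\infty\bigr)^{-1}$ so that these sinks become $0,1,\infty$; continuity of $\Sigma_2$ follows from continuity of $M(a,b,c)$ in $(a,b,c)$ and of the sink as a function of a hyperbolic isometry. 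Your own sink formula already contains all the ingredients for this construction, so the repair is short --- but as written, the appeal to ``proper action, hence local sections'' is the one step that does not stand on its own.
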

    \begin{proof}
        We need to show that a sequence of marked hyperbolic structures $[X_i, f_i]$ converges to $[X, f]$ as $i \to \infty$ if and only if the corresponding sequence of characters $\Phi_{\mathup{at}} [X_i, f_i]$ converges to $\Phi_{\mathup{at}} [X, f]$ as $i \to \infty$.
        
        We begin with the `only if' part.
        Suppose $[X_i, f_i] \to [X, f]$ as $i \to \infty$ in $\mathcal T (S)$.
        Since $\Phi_p$ is a homeomorphism, we have $\Phi_p [X_i, f_i] \to \Phi_p [X, f]$ in $\mathcal T (p)$.
        That is, $[\partial \tilde f_i] \to [\partial \tilde f]$ in $\mathcal T (p)$ as $i \to \infty$.
        We can promote this to convergence in $\widetilde {\mathcal T} (p)$ by constructing a continuous section $\Sigma_1 : \mathcal T (p) \to \widetilde{\mathcal T} (p)$.
        For any $[\hat F] \in \mathcal T (p)$, let $\Sigma_1 ([\hat F])$ be the unique homeomorphism of $S^1$ in the right coset $\mathup{PSL} (2, \RR) \circ \hat F$ that fixes the three points $0, 1, \infty$.
        In fact, $\Sigma_1 (\hat F) = M (\hat F (0), \hat F (1), \hat F (\infty))^{-1} \circ \hat F$, where $M (a, b, c)$ is the M\"obius transformation mapping the triple $(0, 1, \infty)$ to the triple $(a, b, c)$.
        Since $M (a, b, c)$ is a continuous function of $a, b, c \in S^1$, evaluations at $0, 1, \infty$ are continuous functions of $\hat F$, and compositions are continuous, we conclude that $\Sigma_1$ is a continuous function.
        Therefore $\Sigma_1 ([\partial \tilde f_i]) \to \Sigma_1 ([\partial \tilde f])$ as $i \to \infty$.
        Denoting $\Sigma_1 ([\partial \tilde f_i])$ by $F_i$ and $\Sigma_1 ([\partial \tilde f])$ by $F$, we have $F_i \to F$ in $\widetilde{\mathcal T} (p)$ as $i \to \infty$.
        Note that, by Proposition \ref{prop:homeomorphism_to_representation}, $\Phi_{\mathup{at}} [X, f]$ is represented by the representation $\rho$ which satisfies, for each $[\gamma] \in \pi_1 (S, s)$, the relation $\rho [\gamma] = F \circ \rho_p [\gamma] \circ F^{-1}$.
        Similarly, $\Phi_{\mathup{at}} [X_i, f_i]$ is represented by the representation $\rho_i$ which satisfies, for each $[\gamma] \in \pi_1 (S, s)$, the relation $\rho_i [\gamma] = (F_i) \circ \rho_p [\gamma] \circ (F_i)^{-1}$.
        This $\rho_i$ converges, as $i \to \infty$, to $(F) \circ \rho_p [\gamma] \circ (F)^{-1} = \rho [\gamma]$.
        On the level of representations, this means $\rho_i \to \rho$ and on the level of characters, $[\rho_i] \to [\rho]$ as $i \to \infty$.
        In other words, $\Phi_{\mathup{at}} [X_i, f_i] \to \Phi_{\mathup{at}} [X, f]$ as $i \to \infty$ and therefore $\Phi_{\mathup{at}}$ is continuous.
        
        Next we prove the `if' part.
        Suppose $\Phi_{\mathup{at}} [X_i, f_i] \to \Phi_{\mathup{at}} [X, f]$ as $i \to \infty$.
        We promote the convergence of characters to convergence of representations by constructing a continuous section $\Sigma_2 : \Phi_{\mathup{at}} (\mathcal T (S)) \to \operatorname{Hom} (\pi_1 (S, s), \mathup{PSL} (2, \RR))$.
        Let $\alpha_1, \alpha_2, \alpha_3 \in \pi_1 (S, s)$ be such that for any $[\hat \rho] \in \Phi_{\mathup{at}} (\mathcal T (S))$, we have $\hat \rho (\alpha_k)$ is a hyperbolic element of $\mathup{PSL} (2, \RR)$ for each $k = 1, 2, 3$.
        We can choose these elements $\alpha_1, \alpha_2, \alpha_3$ such that the sinks $(\hat \rho (\alpha_k))_\infty$, $k = 1, 2, 3$, are distinct, since the set $(\Gamma_{\hat X})_\infty$ of sinks is dense in $S^1$ for any $[\hat X, \hat f] \in \mathcal T (S)$.
        Relabelling if necessary, we assume that $((\hat \rho (\alpha_1))_\infty, (\hat \rho (\alpha_2))_\infty, (\hat \rho (\alpha_3))_\infty)$ is a positively oriented triple of points in $S^1$.
        Hyperbolicity of an element of $\pi_1 (S, s)$ does not depend on the chosen marked hyperbolic structure or the particular representation in its conjugacy class, and neither does the circular order of the three points $(\hat \rho (\alpha_1))_\infty, (\hat \rho (\alpha_2))_\infty, (\hat \rho (\alpha_3))_\infty$.
        Now, for any $\hat \rho \in \Phi_{\mathup{at}} (\mathcal T (S))$, let $\Sigma_2 [\hat \rho]$ be the unique representation that represents the same character as $\hat \rho$ and such that the sinks of $\Sigma_2 [\hat \rho] (\alpha_1), \Sigma_2 [\hat \rho] (\alpha_2), \Sigma_2 [\hat \rho] (\alpha_3)$ are $0, 1, \infty$ respectively.
        In fact $\Sigma_2 [\hat \rho]$ is the conjugate of $\hat \rho$ by the M\"obius transformation $M ((\hat \rho_i (\alpha_1))_\infty, (\hat \rho_i (\alpha_2))_\infty, (\hat \rho_i (\alpha_3))_\infty)^{-1}$.
        Since $M (a, b, c)$ is a continuous function of $a, b, c \in S^1$, evaluations at $\alpha_1, \alpha_2, \alpha_3 \in \pi_1 (S, s)$ are continuous functions of the representation $\hat \rho$, the sink of a hyperbolic isometry is a continuous function of the hyperbolic isometry, and compositions are continuous, we conclude that $\Sigma_2$ is continuous.
        Therefore $\Sigma_2 (\Phi_{\mathup{at}} [X_i, f_i]) \to \Sigma_2 (\Phi_{\mathup{at}} [X, f])$ as $i \to \infty$.
        Denoting $\Sigma_2 (\Phi_{\mathup{at}} [X_i, f_i])$ by $\rho_i$ and $\Sigma_2 (\Phi_{\mathup{at}} [X, f])$ by $\rho$, we have $\rho_i \to \rho$ in $\operatorname{Hom} (\pi_1 (S, s), \mathup{PSL} (2, \RR))$ as $i \to \infty$.
        This means that for all hyperbolic elements $[\gamma] \in \pi_1 (S, s)$, we have $\rho_i [\gamma] \to \rho [\gamma]$, and so $(\rho_i [\gamma])_\infty \to (\rho [\gamma])_\infty$ as $i \to \infty$.
        
        Note that, by Proposition \ref{prop:representation_to_homeomorphism}, $\Phi_p [X, f]$ is represented by the homeomorphism $F$ of $S^1$, which satisfies, for every hyperbolic $[\gamma] \in \pi_1 (S, s)$, the relation $F ((\rho_p [\gamma])_\infty) = (\rho [\gamma])_\infty$.
        Similarly $\Phi_p [X_i, f_i]$ is represented by the homeomorphism $F_i \in \Homeo^+ (S^1)$ which satisfies, for every hyperbolic $[\gamma] \in \pi_1 (S, s)$, the relation $F_i ((\rho_p [\gamma])_\infty) = (\rho_i [\gamma])_\infty$.
        To show that $[X_i, f_i]$ converges to $[X, f]$, it is enough to prove that $F_i$ converges to $F$ as $i \to \infty$, in the topology of uniform convergence on $S^1$ with respect to some metric $d$.
        Let $\varepsilon > 0$ be given.
        Reducing $\varepsilon$ if necessary, assume $\varepsilon < \frac 12$.
        Choose hyperbolic elements $[\gamma_1], [\gamma_2], \ldots, [\gamma_n] \in \pi_1 (S, s)$ such that their sinks $(\rho [\gamma_1])_\infty, (\rho [\gamma_2])_\infty, \ldots, (\rho [\gamma_n])_\infty$ form an $\frac{\varepsilon}{12}$-dense set in $S^1$.
        This is possible because the set $\Gamma_X$ of sinks is dense in $S^1$ and $S^1$ is compact.
        Relabelling if necessary, we assume that the points $(\rho [\gamma_1])_\infty, (\rho [\gamma_2])_\infty, \ldots, (\rho [\gamma_n])_\infty$ are in positive circular order.
        These points divide the circle at infinity into $n$ intervals, each of length at most $\frac\varepsilon 6$.
        For each $j = 1, 2, \ldots, n$, we have $(\rho_p [\gamma_j])_\infty = F^{-1} (\rho [\gamma_j])_\infty$.
        As $F^{-1}$ is an orientation preserving homeomorphism, it follows that the sinks $(\rho_p [\gamma_1])_\infty, (\rho_p [\gamma_2])_\infty, \ldots, (\rho_p [\gamma_n])_\infty$ are in positive circular order.
        Similarly for each $i$ and each $j = 1, 2, \ldots, n$, we have $(\rho_i [\gamma_j])_\infty = F_i ((\rho_p [\gamma_j])_\infty)$, and since $F_i$ is an orientation preserving homeomorphism, it follows that the points $(\rho_i [\gamma_1])_\infty, (\rho_i [\gamma_2])_\infty, \ldots, (\rho_i [\gamma_n])_\infty$ are also in positive circular order.
        Since sinks depends continuously on hyperbolic isometries, for each $j = 1, 2, \ldots, n$, the convergence $\rho_i [\gamma_j] \to \rho [\gamma_j]$ implies $(\rho_i [\gamma_j])_\infty \to (\rho [\gamma_j])_\infty$ as $i \to \infty$.
        Therefore for all sufficiently large $i$ and each of the finitely many indices $j = 1, 2, \ldots, n$, we have $d ((\rho_i [\gamma_j])_\infty, (\rho [\gamma_j])_\infty) < \frac{\varepsilon}{6}$.
        Further, we have $d ((\rho_i [\gamma_j])_\infty, (\rho_i [\gamma_{j + 1}])_\infty) \le
        d ((\rho_i [\gamma_j])_\infty, (\rho [\gamma_j])_\infty) + d ((\rho [\gamma_j])_\infty, (\rho [\gamma_{j + 1}])_\infty) + d ((\rho [\gamma_{j + 1}])_\infty, (\rho_i [\gamma_{j + 1}])_\infty) < \frac \varepsilon 6 + \frac \varepsilon 6 + \frac \varepsilon 6 = \frac \varepsilon 2$.
        Thus the points $(\rho_i [\gamma_1])_\infty, (\rho_i [\gamma_2])_\infty, \ldots, (\rho_i [\gamma_n])_\infty$ divide $S^1$ into intervals of length at most $\frac \varepsilon 2$.
        
        Suppose $q \in S^1$ is an arbitrary point.
        Then for some $j$, $q$ belongs to an interval bounded by points $(\rho_p [\gamma_j])_\infty, (\rho_p [\gamma_{j + 1}])_\infty$.
        Since $F_i$ is an orientation preserving homeomorphism, it follows that $F_i (q)$ is in the interval bounded by the points $(\rho_i [\gamma_j])_\infty$ and $(\rho_i [\gamma_{j + 1}])_\infty$.
        Therefore $d (F_i (q), (\rho_i [\gamma_j])_\infty)$ is less than the length of this interval, which is less than $\frac \varepsilon 2$.
        Similarly, $F$ is an orientation preserving homeomorphism, so $F (q)$ lies in the interval bounded by the points $(\rho [\gamma_j])_\infty$ and $(\rho [\gamma_{j + 1}])_\infty$, and hence $d (F (q), (\rho [\gamma_j])_\infty)$ is less than the length of this interval, which is less than $\frac \varepsilon 6$.
        Now we have $d (F_i (q), F (q)) \le d (F_i (q), (\rho_i [\gamma_j])_\infty) + d ((\rho_i [\gamma_j])_\infty, (\rho [\gamma_j])_\infty) + d ((\rho [\gamma_j])_\infty, F (q)) < \frac \varepsilon 2 + \frac \varepsilon 6 + \frac \varepsilon 6 < \varepsilon$.
        Since $q \in S^1$ was arbitrary and $\varepsilon$ was an arbitrary positive number, we infer that $F_i \to F$ as $i \to \infty$ in the topology of uniform convergence.
        This concludes the `if' part, that is, $\Phi_{\mathup{at}}^{-1} : \Phi_{\mathup{at}} (\mathcal T (S)) \to \mathcal T (S)$ is continuous.
        Hence $\Phi_{\mathup{at}}$ is an embedding.
    \end{proof}
    
    \begin{cor} \label{cor:marked_moduli_space_reduces_to_teichmuller_space}
        If $S$ is a finite type surface of negative Euler characteristic, then the topology on $\mathcal T (S)$ agrees with the topology on Teichm\"uller space.
    \end{cor}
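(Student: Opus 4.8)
The plan is to exploit Theorem \ref{thm:embedding_character_space}, which already presents the topology on $\mathcal T(S)$ of Definition \ref{def:marked_moduli_space_topology} as the topology pulled back along the holonomy embedding $\Phi_{\mathup{at}}$ into the character variety $X(\pi_1(S,s), \mathup{PSL}(2,\RR))$. The strategy is to recognise that, for finite type surfaces, the \emph{usual} topology on Teichm\"uller space is \emph{also} characterised as the subspace topology induced by this very same holonomy embedding. Once both topologies are exhibited as pullbacks along the same injection of the same underlying set, they must coincide.

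First I would establish the set-level identification $\mathcal T(S) = \operatorname{Teich}(S)$ for $S$ of finite type. This is essentially already observed in the introduction: a complete hyperbolic structure on a finite type surface is Nielsen-convex (equivalently, by Proposition \ref{prop:nielsen_convex_alternatives}, its holonomy is a Fuchsian group of the first kind) precisely when it has finite area, because for \emph{finitely generated} Fuchsian groups having full limit set and having finite coarea are equivalent. Since the usual Teichm\"uller space parametrises exactly the marked complete finite-area hyperbolic structures (with cusps at the punctures), the two sets agree, and under this identification $\Phi_{\mathup{at}}$ is nothing but the classical holonomy map.

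The key classical input I would then cite is that, for finite type $S$, the usual Teichm\"uller topology agrees with the topology that $\operatorname{Teich}(S)$ inherits as a subspace of the $\mathup{PSL}(2,\RR)$-character variety via holonomy; the image is a distinguished locus in (for closed surfaces, a union of components of) the character variety, and holonomy is a homeomorphism onto it (see, e.g., \cite{hubbard_2006_teichmueller}). Granting this, the corollary follows immediately: Theorem \ref{thm:embedding_character_space} says $\Phi_{\mathup{at}}$ is a homeomorphism from $\mathcal T(S)$, with the topology of Definition \ref{def:marked_moduli_space_topology}, onto its image with the character-variety subspace topology, while the classical statement says the same image with the same subspace topology is homeomorphic to $\operatorname{Teich}(S)$ with its usual topology. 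Composing, the identity map of the common underlying set is a homeomorphism between the two topologies.

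The main obstacle is not geometric but definitional: one must pin down which of the several equivalent definitions of the usual Teichm\"uller topology (Teichm\"uller metric, Fenchel--Nielsen coordinates, length spectrum, or algebraic convergence of holonomies) is taken as primitive, and then invoke the standard equivalence of that definition with convergence of holonomy characters. I expect the cleanest route is to take algebraic convergence of the holonomy representations as the working description of the Teichm\"uller topology, since that is exactly the form in which Theorem \ref{thm:embedding_character_space} delivers the comparison; the only remaining point is to confirm that $\Phi_{\mathup{at}}$ coincides with the classical holonomy map under the identification of the two underlying sets, which is immediate from the definition of $\Phi_{\mathup{at}}$.
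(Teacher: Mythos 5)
Your proposal is correct and takes essentially the same route as the paper: both identify the underlying sets via the finite-coarea/full-limit-set equivalence for finitely generated Fuchsian groups, invoke the classical description of the Teichm\"uller topology as the subspace topology on discrete faithful characters in $X(\pi_1(S,s),\mathup{PSL}(2,\RR))$, and then conclude via Theorem \ref{thm:embedding_character_space}. The paper's own proof is simply a terser version of yours, citing \cite[Chapter 10]{farb_margalit_2011_primer} for the classical input.
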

    \begin{proof}
        Indeed, one of the ways to describe the Teichm\"uller space is as the character space of all discrete faithful representations of $\pi_1 (S, s)$ into $\mathup{PSL} (2, \RR)$ (see \cite[Chapter 10]{farb_margalit_2011_primer}).
        In particular, the topology on the Teichm\"uller space is defined as a subspace of the full character space $X (\pi_1 (S, s), \mathup{PSL} (2, \RR))$.
        But this is exactly the topology on $\mathcal T (S)$ that we have defined above.
        Thus $\mathcal T (S)$ reduces to the usual Teichm\"uller space in case $S$ is a finite type surface.
    \end{proof}
    
    \nocite{*}
    \bibliographystyle{alpha}
    \bibliography{marked_moduli_space}

\newcommand{\etalchar}[1]{$^{#1}$}
\begin{thebibliography}{ALP{\etalchar{+}}11}

\bibitem[ALP{\etalchar{+}}11]{alessandrini_liu_2011_fenchel_nielsen}
Daniele Alessandrini, Lixin Liu, Athanase Papadopoulos, Weixu Su, and Zongliang
  Sun.
\newblock On {F}enchel-{N}ielsen coordinates on {T}eichmüller spaces of
  surfaces of infinite type.
\newblock {\em Annales Academiae Scientiarum Fennicae Mathematica},
  36:621–659, Aug 2011.

\bibitem[AV20]{aramayona_vlamis_2020_big_mapping_class}
Javier Aramayona and Nicholas~G. Vlamis.
\newblock {\em Big Mapping Class Groups: An Overview}, pages 459--496.
\newblock Springer International Publishing, Cham, 2020.

\bibitem[Ba19]{basmajian_saric_2019_geodesically_complete}
Ara Basmajian and Dragomir Šarić.
\newblock Geodesically {C}omplete {H}yperbolic {S}tructures.
\newblock {\em Mathematical Proceedings of the Cambridge Philosophical
  Society}, 166(2):219–242, 2019.

\bibitem[DE86]{douady_earle_1986_conformally_natural_extension}
Adrien Douady and Clifford~J. Earle.
\newblock {Conformally natural extension of homeomorphisms of the circle}.
\newblock {\em Acta Mathematica}, 157:23 -- 48, 1986.

\bibitem[FM11]{farb_margalit_2011_primer}
Benson Farb and Dan Margalit.
\newblock {\em A Primer on Mapping Class Groups}.
\newblock Princeton University Press, 2011.

\bibitem[Hat02]{hatcher_2002_algebraic_topology}
Allen Hatcher.
\newblock {\em Algebraic Topology}.
\newblock Cambridge University Press, Cambridge, 2002.

\bibitem[Hub06]{hubbard_2006_teichmueller}
John~Hamal Hubbard.
\newblock {\em Teichm\"{u}ller theory and applications to geometry, topology,
  and dynamics. {V}ol. 1}.
\newblock Matrix Editions, Ithaca, NY, 2006.
\newblock Teichm\"{u}ller theory, With contributions by Adrien Douady, William
  Dunbar, Roland Roeder, Sylvain Bonnot, David Brown, Allen Hatcher, Chris
  Hruska and Sudeb Mitra, With forewords by William Thurston and Clifford
  Earle.

\bibitem[Hub16]{hubbard_2016_teichmueller2}
John~Hamal Hubbard.
\newblock {\em Teichm\"{u}ller theory and applications to geometry, topology,
  and dynamics. {V}ol. 2}.
\newblock Matrix Editions, Ithaca, NY, 2016.
\newblock Surface homeomorphisms and rational functions.

\bibitem[Hub22]{hubbard_2022_teichmueller3}
John~Hamal Hubbard.
\newblock {\em Teichm\"{u}ller theory and applications to geometry, topology,
  and dynamics. {V}ol. 3}.
\newblock Matrix Editions, Ithaca, NY, 2022.
\newblock Manifolds that Fiber over the Circle.

\bibitem[Ker83]{kerckhoff_1983_nielsen_realization}
Steven~P. Kerckhoff.
\newblock The {N}ielsen {R}ealization {P}roblem.
\newblock {\em Annals of Mathematics}, 117(2):235--265, 1983.

\bibitem[Thu86]{thurston_1986_earthquakes}
William~P. Thurston.
\newblock Earthquakes in two-dimensional hyperbolic geometry.
\newblock In {\em Low-dimensional topology and {K}leinian groups
  ({C}oventry/{D}urham, 1984)}, volume 112 of {\em London Math. Soc. Lecture
  Note Ser.}, pages 91--112. Cambridge Univ. Press, Cambridge, 1986.

\bibitem[Thu88]{thurston_1988_geometry_dynamics_diffeomorphisms_surfaces}
William~P. Thurston.
\newblock On the geometry and dynamics of diffeomorphisms of surfaces.
\newblock {\em Bulletin (New Series) of the American Mathematical Society},
  19(2):417--431, 1988.

\bibitem[Thu98]{thurston_1998_hyperbolic_manifolds_fiber_circle}
William~P. Thurston.
\newblock Hyperbolic {S}tructures on 3-manifolds, {II}: {S}urface groups and
  3-manifolds which fiber over the circle.
\newblock 1998.

\end{thebibliography}
    
\end{document}